\documentclass[hidelinks,onefignum,onetabnum]{siamart251216}

\newif\ifisarxiv
\isarxivtrue

\usepackage{lipsum}
\usepackage{amsfonts}
\usepackage{amssymb}
\usepackage{graphicx}
\usepackage{epstopdf}
\usepackage[normalem]{ulem}
\usepackage{subcaption}  
\usepackage{comment}
\usepackage{algorithm, algpseudocode, algorithmicx}

\ifpdf
  \DeclareGraphicsExtensions{.eps,.pdf,.png,.jpg}
\else
  \DeclareGraphicsExtensions{.eps}
\fi

\def\berr{{\mathrm{berr}}}
\def\algpsd{{MINBERR}}
\def\algne{{MINBERR-NE}}

\newsiamremark{remark}{Remark}
\newsiamremark{hypothesis}{Hypothesis}
\crefname{hypothesis}{Hypothesis}{Hypotheses}
\newsiamthm{claim}{Claim}
\newsiamremark{fact}{Fact}
\crefname{fact}{Fact}{Facts}

\def\myfunding{MD was supported in part by NSF CAREER Grant CCF-233865 and a Google ML and Systems Junior Faculty Award. This work was done in part while MD was visiting the Simons Institute for the Theory of Computing. 
YN was supported by the EPSRC grant EP/Y030990/1.
ER was partially supported by NSF DMS-2309685. }

\headers{Universal Convergence of Linear  Solvers}{M. Derezi\'nski, Y. Nakatsukasa, and E. Rebrova}

\title{Towards Universal Convergence\\ of Backward Error
in Linear System Solvers\thanks{\ifisarxiv\textbf{Funding:} \myfunding\else Submitted to the editors \today.\funding{\myfunding}\fi
 }}

\author{Micha{\L} Derezi\'nski\thanks{University of Michigan, Ann Arbor, MI
  (\email{derezin@umich.edu}).}
\and Yuji Nakatsukasa\thanks{University of Oxford, Oxford, UK  (\email{yuji.nakatsukasa@maths.ox.ac.uk}).}
\and Elizaveta Rebrova\thanks{Princeton University, Princeton, NJ (\email{elre@princeton.edu}).}}

\usepackage{amsopn}

\def\Kc{\mathcal{K}}

\newif\ifDRAFT
\DRAFTtrue
\ifDRAFT
\newcommand{\marrow}{\marginpar[\hfill$\longrightarrow$]{$\longleftarrow$}}
\newcommand{\niceremark}[3]
   {\textcolor{red}{\textsc{#1 #2:} \marrow\textsf{#3}}}
\newcommand{\michal}[2][says]{\niceremark{Michal}{#1}{#2}}
\else
\newcommand{\michal}[1]{}
\fi

\def\poly{{\mathrm{poly}}}

\def\Sigmab{\mathbf{\Sigma}}

\def\T{\mathbf{T}}

\def\q {\mathbf{q}}

\def\G{{\mathbf{G}}}

\def\W{\mathbf W}

\def\Pc{\mathcal{P}}

\def\Tc{\mathcal{T}}

\def\Q{\mathbf Q}

\newcommand{\Span}{\mathrm{span}}

\ifx\BlackBox\undefined
\newcommand{\BlackBox}{\rule{1.5ex}{1.5ex}}  
\fi
\DeclareMathOperator*{\argmin}{\mathop{\mathrm{argmin}}}

\def\x{\mathbf x}
\def\y{\mathbf y}

\def\z{\mathbf z}

\def\b{\mathbf b}
\def\w{\mathbf w}
\def\v{\mathbf v}

\def\e{\mathbf e}
\def\zero{\mathbf 0}

\def\u{\mathbf u}

\def\B{\mathbf B}
\def\A{\mathbf A}

\def\U{\mathbf U}

\def\V{\mathbf V}
\def\M{\mathbf M}

\def\I{\mathbf I}

\def\A{\mathbf A}

\def\R{\mathbb R}

\let\origtop\top
\renewcommand\top{{\scriptscriptstyle{\origtop}}} 

\definecolor{silver}{cmyk}{0,0,0,0.3}
\definecolor{yellow}{cmyk}{0,0,0.9,0.0}
\definecolor{reddishyellow}{cmyk}{0,0.22,1.0,0.0}
\definecolor{black}{cmyk}{0,0,0.0,1.0}
\definecolor{darkYellow}{cmyk}{0.2,0.4,1.0,0}
\definecolor{orange}{cmyk}{0.0,0.7,0.9,0}
\definecolor{darkSilver}{cmyk}{0,0,0,0.1}
\definecolor{grey}{cmyk}{0,0,0,0.5}
\definecolor{darkgreen}{cmyk}{1,0,1,0} 

\ifx\proof\undefined
\newenvironment{proof}{\par\noindent{\bf Proof\ }}{\hfill\BlackBox\\[2mm]}
\fi

\ifx\theorem\undefined
\newtheorem{theorem}{Theorem}
\fi

\ifx\example\undefined
\newtheorem{example}{Example}
\fi

\ifx\condition\undefined
\newtheorem{condition}{Condition}
\fi
\ifx\property\undefined
\newtheorem{property}{Property}
\fi

\ifx\lemma\undefined
\newtheorem{lemma}{Lemma}
\fi

\ifx\proposition\undefined
\newtheorem{proposition}{Proposition}
\fi

\ifx\remark\undefined
\newtheorem{remark}{Remark}
\fi

\ifx\corollary\undefined
\newtheorem{corollary}{Corollary}
\fi

\ifx\definition\undefined
\newtheorem{definition}{Definition}
\fi

\ifx\conjecture\undefined
\newtheorem{conjecture}{Conjecture}
\fi

\ifx\axiom\undefined
\newtheorem{axiom}{Axiom}
\fi

\ifx\claim\undefined
\newtheorem{claim}{Claim}
\fi

\ifx\assumption\undefined
\newtheorem{assumption}{Assumption}
\fi

\ifx\condition\undefined
\newtheorem{condition}{Condition}
\fi

\ifpdf
\hypersetup{
  pdftitle={An Example Article},
  pdfauthor={D. Doe, P. T. Frank, and J. E. Smith}
}
\fi

\begin{document}

\maketitle

\begin{abstract}
The quest for an algorithm that solves an $n\times n$ linear system
in $O(n^2)$ time complexity, or $O(n^2 \poly(1/\epsilon))$ 
when solving up to $\epsilon$ relative error, is a long-standing open problem in numerical linear algebra and theoretical computer science. 
There are two predominant paradigms for measuring relative error:
forward error (i.e., distance from the output to the optimum solution) and backward error (i.e., distance to the nearest problem solved by the output). In most prior studies, convergence of iterative linear system solvers is measured via various notions of forward error, and as a result, depends heavily on the conditioning of the input. Yet, the numerical analysis literature has long advocated for backward error as the more practically relevant notion of approximation. In this work, we show that --- surprisingly --- the classical and simple Richardson iteration incurs at most $1/k$ (relative) backward error after $k$ iterations on any positive semidefinite (PSD) linear system, irrespective of its condition number. This universal convergence rate implies an $O(n^2/\epsilon)$ complexity algorithm for solving a PSD linear system to $\epsilon$ backward error, and we establish
similar or better complexity when using a variety of Krylov solvers beyond Richardson. Then, by directly minimizing backward error over a Krylov subspace, we attain an even faster $O(1/k^2)$ universal rate, and we turn this into an efficient algorithm, MINBERR, with complexity $O(n^2/\sqrt\epsilon)$. Finally, we extend this approach via normal equations to solving general linear systems in $O(n^2\log(n)/\epsilon)$ time complexity. We report strong numerical performance of our algorithms on benchmark problems.
\end{abstract}

\begin{keywords}
Linear systems, Backward error, Krylov methods, Richardson iteration
\end{keywords}

\begin{MSCcodes}
	65F10, 65F35, 65Y20
\end{MSCcodes}

\section{Introduction}
Solving a system of linear equations $\A\x=\b$ given an $n\times n$ matrix $\A$ and vector $\b\in\mathbb{R}^n$ is perhaps the most important problem in computational mathematics. 
Direct methods, such as those based on Gaussian elimination with partial pivoting or QR factorization, remain the de facto standard for solving linear systems when $n$ is moderate, say $n<10^4$ in today's desktop computers and $n<10^6$ with supercomputers. 
However, the arithmetic complexity of these methods scales as $O(n^3)$, making them infeasible when $n$ is much larger. An alternative approach is to use fast Strassen-type matrix multiplication algorithms, which improve the asymptotic complexity of solving linear systems to $O(n^\omega)$, where currently $\omega\approx 2.371$ \cite{alman2025more}, but have remained largely theoretical due to extremely large constant factors.

All of the aforementioned approaches tend to produce highly accurate solutions (modulo numerical precision issues), but at a relatively high computational cost. Yet, in many applications of linear system solvers we are willing to trade off accuracy for computation by relying on approximate solutions that are computed within some $\epsilon$ error relative to the optimum. This has motivated the long-standing question:
\smallskip
\begin{center}
\textit{Can we compute an $\epsilon$ relative error solution of an $n\times n$ linear system in $O(\,n^2\,\poly(1/\epsilon)\,)$ time?}
\end{center}
\smallskip

In response to this question, many iterative methods have been developed, from the classical iteration scheme proposed by Richardson in 1911 \cite{richardson1911ix} to the modern Krylov subspace methods \cite{greenbaumbook}. These algorithms can quickly produce a rough approximation to a linear system by making incremental updates towards the solution. The complexity of a single iteration typically scales with the cost of applying $\A$ to a vector (denoted here by $\Tc_{\A}$), which is no more than $O(n^2)$ arithmetic operations for dense matrices.

However, the effectiveness of iterative methods is highly dependent on the spectral properties of $\A$. In particular, existing convergence guarantees \cite{saad2003iterative} suggest that for a matrix with condition number $\kappa(\A) = \|\A\|_2\|\A^{-1}\|_2$, as many as $O(\kappa(\A)\log(1/\epsilon))$ iterations may be needed to attain a relative error approximation using popular algorithms such as LSQR~\cite{paige1982lsqr}, or at least $O(\sqrt{\kappa(\A)}\log(1/\epsilon))$  using Conjugate Gradient (CG, \cite{hestenes1952methods}) if $\A$ is positive semidefinite (PSD). 
While these guarantees can be fine-tuned given additional information about the spectrum of $\A$ \cite{axelsson1986rate}, they suggest that the answer to our question depends on the conditioning of the input matrix. Is there no hope then of attaining \emph{universal} convergence guarantees for iterative methods, i.e., relative error bounds that converge to zero at a rate that is independent of the input?

Naturally, a proper answer to this question requires specifying what 
``relative error'' refers to. 
The majority of existing results measure the convergence of iterative methods through the notion of a relative \emph{forward error} \cite{saad2003iterative}. Given an output vector $\x\in\R^n$, its (relative) forward error with respect to a norm $\|\cdot\|$, matrix $\A$, and vector $\b$ is defined as:
\begin{align*}
    \text{(forward error)}\qquad\min_{\x_*}\quad \|\x-\x_*\|/\|\x_*\|\quad\text{s.t.}\quad \A\x_*=\b.
\end{align*}
Here, the norm $\|\cdot\|$ most commonly refers to the 2-norm $\|\cdot\|_2$, or if $\A$ is PSD, the $\A$-norm $\|\cdot\|_{\A}$. 
Equally common is what is known as the relative residual\footnote{ 
The residual is often used to measure convergence in practice, because $\|\x-\x_*\|_2$ cannot be computed without knowing $\x_*$. Backward error, by contrast, can be computed easily for linear systems (see \cite{Higham:2002:ASNA} and Lemma~\ref{l:berr-formula}) and least-squares problems~\cite{epperly2026fast}.
} $\|\A\x-\b\|_2/\|\b\|_2$; this can be viewed as the forward error
with respect to the $\A^\top\A$-norm $\|\cdot\|_{\A^\top\A}$. 
Thanks to the normalization, we can assert to have obtained a useful approximation as long as the error is sufficiently less than 1. Unfortunately, even attaining forward error $1/2$ may take an enormous number of iterations when dealing with an ill-conditioned system, and existing 
lower bounds show that 
dependence on condition number 
is in general unavoidable  among all algorithms that access $\A$ through matrix-vector products \cite{derezinski2026matrix}. 

Despite the prevalence of forward error in the convergence analysis of iterative methods, much of the numerical analysis literature has advocated for the \emph{backward error} as a more fundamental notion of approximation quality in numerical linear algebra \cite{Higham:2002:ASNA}. 
Informally speaking, 
small backward error means we have the exact solution for a slightly perturbed problem. This
 accounts for the fact that input data to any given problem has likely already undergone some distortion before reaching the algorithm, and thus it is sufficient to solve a perturbed problem instance. Given an output vector $\x\in\R^n$, its (relative) backward error with respect to a norm $\|\cdot\|$, matrix $\A$, and vector $\b$ is defined as:
\begin{align*}
    \text{(backward error)}\qquad \min_{\tilde\A}\quad \|\tilde\A-\A\|/\|\A\|\quad\text{s.t.}\quad \tilde\A\x=\b.
\end{align*}
Here, the norm $\|\cdot\|$ typically refers to the $2$-norm (which is what we will use throughout), or possibly the entrywise infinity norm, and one may also allow distortion in vector $\b$ (which we omit for simplicity).

Backward error analysis of numerical methods dates back over 60 years to classical works of Wilkinson \cite{wilkinson1960error,wilkinson1961error}, and even before that to von Neumann and Goldstine \cite{von1947numerical}. Since then, numerical linear algebra textbooks \cite{trefbau,Higham:2002:ASNA,golub2013matrix} have  argued that backward error is the most practically relevant metric of approximation quality, and popular linear system solvers such as LSQR \cite{paige1982lsqr} have been using it as the default stopping criterion 
\cite{arioli1992stopping,paige2002residual}.
Currently, with the rise of mixed-precision computations  and compressed data storage on GPU-based hardware architectures \cite{higham2022mixed}, as well as recent applications in training machine learning models \cite{barrett2020implicit,beerens2024adversarial}, backward error has an even more central role to play in the development of numerical algorithms. 

Fortunately, as an approximation metric for linear systems, backward error is provably less restrictive than forward error as it is insensitive to small perturbations of the input. This would suggest that it should enjoy better (ideally, universal) convergence guarantees for iterative linear system solvers.  Yet, perhaps surprisingly, no such guarantees are currently known in the literature.

\paragraph{Main results for PSD linear systems} In this work, we initiate the study of convergence in backward error for iterative linear system solvers, showing that when viewed from this perspective, our motivating question has a simple resolution for the  class of PSD linear systems. We start with Richardson iteration, and prove that it attains a simple universal convergence guarantee (Theorem~\ref{t:richardson}):
\smallskip\begin{center}
    \emph{After $k$ steps of Richardson iteration on a PSD linear system, \\
    the backward error is at most $1/k$.}
\end{center}\smallskip
Note that, again due to normalization, we can assert to have obtained a meaningful approximation as long as backward error is sufficiently less than 1. In particular, to attain an $\epsilon\in(0,1)$ backward error Richardson requires $\lceil1/\epsilon\rceil$ iterations and has $O((\Tc_\A+n)/\epsilon)=O(n^2/\epsilon)$ complexity.
In comparison, the standard convergence analysis yields $O(n^2\kappa(\A)\log(1/\epsilon))$ complexity which applies to both forward and backward error. Naturally, if the condition number of $\A$ is small, then the standard analysis gives an exponentially faster convergence rate, but if $\kappa(\A)$ is large, then our universal rate shows that Richardson makes substantial progress in backward error long before it makes any progress in forward error. Our proof of Theorem \ref{t:richardson} is closely connected with the common intuition that Richardson converges faster along the top eigendirections, a phenomenon which is captured much better by the backward error than it is by the forward error.

Can Richardson's $1/k$ convergence rate for PSD linear systems be improved by utilizing modern Krylov solvers? Interestingly, while methods such as MINRES and CG are not designed out-of-the-box to minimize backward error, we can impose a \emph{deliberate perturbation} in the form of regularization, replacing matrix $\A$ with $\A+\delta\I$ for an appropriately chosen $\delta$, to ensure that after $k$ steps the backward error of CG or MINRES is bounded by $O(\log^2(k)/k^2)$. However, this is still not the best universal rate attainable by a Krylov method. We show this by developing and analyzing an algorithm, MINBERR, that directly minimizes backward error over the Krylov subspace, and attains an $O(1/k^2)$ convergence rate (Theorem \ref{t:minberr}). We provide an efficient implementation of MINBERR by exploiting the structure and symmetry of the Lanczos decomposition for a PSD matrix $\A$, attaining an algorithm that takes $O((\Tc_{\A}+n)k)$ time to perform $k$ iterations. 
This allows us to further improve on the complexity guarantee attained by Richardson iteration (Corollary \ref{c:minberr}):
\smallskip\begin{center}
    \textit{We can compute an $\epsilon$ backward error solution of an $n\times n$ PSD linear system\\ in $O(\,n^2\,/\sqrt\epsilon\,)$ time.}
\end{center}\bigskip

\paragraph{Main results for general linear systems} Can universal convergence guarantees be obtained for general linear systems? One might expect that running the aforementioned PSD solvers on the normal equations of a general system should yield some (possibly weaker) universal rates in terms of backward error. Unfortunately, we show that Richardson on the normal equations can converge as slow as $\Omega(\kappa(\A)/k)$, i.e., it exhibits an initial blow-up of the backward error by up to a factor of $\kappa(\A)$, and thus cannot attain any universal convergence rate (Theorem \ref{t:failure}). We also observe a similar phenomenon empirically for Krylov solvers such as LSQR and LSMR (Figure~\ref{fig:nonsym}). 

We investigate this further by developing a variant of MINBERR which minimizes backward error over the Krylov subspace defined by the normal equations (\algne).
We show that, while this method still does not strictly speaking attain universal convergence, it gets tantalizingly close with an $O(\log(\kappa(\A))/k)$ rate (Theorem \ref{t:minberrne}). The logarithmic dependence on the condition number is indeed observed empirically on some hard instances (Figure \ref{fig:nonsym_hard}), 
but it can be entirely avoided by employing a random perturbation. We show that running \algne\ on $\A+\delta\G$ for a carefully chosen $\delta$ and a Gaussian matrix $\G$ 
nearly resolves our main motivating question in backward error for general $n\times n$ linear systems
(Corollary~\ref{c:minberrne}):
\smallskip\begin{center}
    \textit{We can compute an $\epsilon$ backward error solution of a general $n\times n$ linear system\\ in $O(\,n^2\log(n)/\epsilon\,)$ time.}    
\end{center}

\section{Background and Related Work}
\label{s:related-work}
Here, we provide additional background on backward error and convergence analysis of linear system solvers, while also highlighting an important connection to continuous optimization literature.

\paragraph{Backward error analysis} Trefethen and Bau in their Numerical Linear Algebra textbook \cite{trefbau} describe backward error analysis as ``a fundamental idea linking conditioning and stability, whose power has been proved in innumerable applications since the 1950s''.
This idea was primarily initiated by Wilkinson, whose books on ``Rounding Errors in Algebraic Processes" \cite[1963]{wilkinson1963rounding} and ``The Algebraic Eigenvalue Problem" \cite[1965]{wilkinson:1965} have achieved the status of classics, according to Higham \cite{Higham:2002:ASNA} who provides a modern treatment of the subject. 

Developing backward error analysis for iterative linear system solvers and Krylov subspace methods has been the subject of extensive work, including by Paige \cite{paige1976error} and Greenbaum \cite{greenbaum1989behavior} on the stability of CG/Lanczos, and it remains an active area of research to this day \cite{musco2018stability}. These works focus on how rounding errors in finite precision arithmetic affect the backward error relative to an exact execution of the algorithm (which is different from the true backward error, since the algorithm returns an approximate solution even in exact arithmetic). Paige and Saunders \cite{paige1982lsqr} suggested backward error tolerance as a natural stopping criterion for LSQR,\footnote{Backward error tolerance is the default stopping criterion in SciPy's LSQR implementation \cite{virtanen2020scipy}.} and later, Fong and Saunders \cite{fong2011lsmr} cited strong backward error performance as a highly desirable advantage of their LSMR algorithm. 
Kasenally and Simoncini proposed GMBACK~\cite{kasenally1995gmback,kasenally1997analysis}, which  minimizes backward error over a Krylov subspace, similarly to MINBERR and \algne, except with a much higher per-iteration cost since it does not exploit symmetry via the Lanczos decomposition. Backward error has also been recently used in machine learning to interpret model training via stochastic gradient descent as solving a nearby or modified optimization problem, providing a principled way to reason about training dynamics and implicit regularization \cite{barrett2020implicit,di2023backward, feng2019uniform}, and to formalize stability with respect to structured perturbations of data and parameters~\cite{beerens2024adversarial}.

\paragraph{Convergence analysis of iterative solvers} Even though backward error analysis has been used in the development of many iterative solvers, their convergence properties have been understood primarily in terms of forward error. This often leads to highly pessimistic guarantees where the number of iterations scales with the condition number of the problem \cite{saad2003iterative,golub2013matrix}. On the positive side, Axelsson and Lindskog \cite{axelsson1986rate} among many others have shown that by relying on polynomial approximation theory those guarantees can be improved for Krylov methods when the input matrix has clustered or isolated eigenvalues. Nevertheless, following the ideas of Nemirovsky and Yudin \cite{nemirovskij1983problem},  Chou \cite{chou1987optimality} showed that no deterministic algorithm can attain a \emph{universal} convergence rate in forward error, since the dependence on the condition number is inevitable in the worst case. Similar hardness results apply when we restrict ourselves to PSD linear systems, and they can also be extended to all randomized algorithms~\cite{derezinski2026matrix}. 

\paragraph{Connections to continuous optimization}
Our universal convergence guarantees bear some resemblance to the analysis of iterative solvers for minimizing convex objective functions, such as Gradient Descent (GD) and Nesterov's Accelerated Gradient (NAG) method \cite{nesterov1983method}. In particular, Richardson can be viewed as GD with respect to a certain convex quadratic objective (as discussed in detail in Section \ref{s:analysis}), and CG exhibits strong connections with NAG in the way it achieves acceleration \cite{d2021acceleration}.
In the continuous optimization literature, convergence of GD, NAG, and related methods is established in terms of the rate of decay of the so-called excess objective function value (relative to its minimum), 
attaining rates of the form $O(L/k)$ for GD and $O(L/k^2)$ for NAG (a.k.a.~sublinear rates, see \cite{d2021acceleration} for an overview), where $L$ is a problem-dependent parameter. Similar sublinear rates have been established for the forward error of CG by Axelsson and Kaporin \cite{axelsson2000sublinear}, where a different problem-dependent parameter $L$ appears. While none of these guarantees can be directly used to attain universal convergence of either backward or forward error in linear system solvers, the idea of using regularization to establish sublinear convergence of an algorithm (as we do in Theorem~\ref{t:perturbed-minres}) is well known in continuous optimization \cite{allen2016optimal}.

\section{Universal Convergence Analysis for Classical Solvers}
\label{s:analysis}
Here, we present universal convergence analysis of backward error for several classical iterative solvers, including Richardson iteration, Conjugate Gradient, and MINRES.
\subsection{Preliminaries}\label{sec:berr}
We start with notation and preliminaries.
\paragraph{Notation} We use $\|\cdot\|_2$ 
to denote the 2-norm (spectral norm for matrices and Euclidean norm for vectors). For a positive semidefinite (PSD) matrix $\M$, we define the $\M$-norm, $\|\x\|_{\M}:=\sqrt{\x^\top\M\x}$. We use $\kappa(\A) := \frac{\sigma_{\max}(\A)}{\sigma_{\min}(\A)}$ to denote the condition number, where $\sigma_{\max}$ and $\sigma_{\min}$ are the largest and smallest singular values, respectively.

\paragraph{Backward error} We start by formulating the standard definition of backward error for general square linear systems of the form $\A\x=\b$, where for simplicity we will assume throughout that $\A,\b\neq \zero$. Here, we consider a variant of backward error where only a perturbation of $\A$ is allowed, but not of $\b$. Naturally, if one allows perturbations of $\b$ as well, then this can only reduce the backward error, so all our convergence guarantees also implicitly apply to that variant.
\begin{definition}[Backward error]
    Given $\A\in\R^{n\times n}$ and $\b, \x\in\R^n$, define:
    \begin{align*}
        \berr_{\A,\b}(\x) \ := \
        \min_{\Delta\A}\ \frac{\|\Delta\A\|_2}{\|\A\|_2}\quad\textnormal{s.t.}\quad  (\A+\Delta\A)\x=\b.
    \end{align*}
\end{definition}
In our analysis, we rely on the classical representation of backward error for square matrices in terms of the residual error normalized by the norm of the output vector.
\begin{lemma}[Lemma 1.1, \cite{Higham:2002:ASNA}]\label{l:berr-formula}
    For any $\x\neq \zero$, its backward error with respect to $\A \in\R^{n\times n}$ and $\b \in\R^n$ satisfies:
    \begin{align*}
        \berr_{\A,\b}(\x) = \frac{\|\A\x-\b\|_2}{\|\A\|_2\|\x\|_2}.
    \end{align*}
\end{lemma}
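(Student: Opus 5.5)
We prove the identity by establishing two matching inequalities: a lower bound that holds for every admissible perturbation, and an explicit rank-one perturbation that attains it. Throughout, write $\mathbf r := \b - \A\x$ for the residual. Recall that $\x\neq\zero$ by assumption and $\A\neq\zero$ by the standing convention, so both sides of the claimed formula are well defined.

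\emph{Lower bound.} Let $\Delta\A$ be any matrix with $(\A+\Delta\A)\x=\b$. Rearranging gives $\Delta\A\,\x = \b-\A\x = \mathbf r$. By the definition of the spectral norm as an operator norm,
\begin{align*}
\|\mathbf r\|_2 = \|\Delta\A\,\x\|_2 \le \|\Delta\A\|_2\,\|\x\|_2 .
\end{align*}
Hence $\|\Delta\A\|_2 \ge \|\A\x-\b\|_2/\|\x\|_2$. Dividing by $\|\A\|_2$ and minimizing over all feasible $\Delta\A$ gives $\berr_{\A,\b}(\x)\ge \|\A\x-\b\|_2/(\|\A\|_2\|\x\|_2)$.

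\emph{Attainment.} Take the rank-one perturbation
\begin{align*}
\Delta\A := \frac{\mathbf r\,\x^\top}{\|\x\|_2^2}.
\end{align*}
This is well defined because $\x\neq\zero$. It is feasible, since $\Delta\A\,\x = \mathbf r\,(\x^\top\x)/\|\x\|_2^2 = \mathbf r$, and therefore $(\A+\Delta\A)\x = \A\x + \mathbf r = \b$. For a rank-one matrix $\u\v^\top$ the spectral norm equals $\|\u\|_2\|\v\|_2$, because its only nonzero singular value is $\|\u\|_2\|\v\|_2$. Applying this with $\u=\mathbf r$ and $\v=\x/\|\x\|_2^2$ gives $\|\Delta\A\|_2 = \|\mathbf r\|_2/\|\x\|_2$, which matches the lower bound. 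If $\mathbf r=\zero$, then $\Delta\A=\zero$ and both sides of the formula are zero, so this case is consistent.

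Combining the two directions gives equality, and the minimum in the definition is attained. No step here is a real obstacle. The only points needing care are the role of $\x\neq\zero$, which is needed both to divide in the lower bound and to define $\Delta\A$, and the spectral-norm identity for rank-one matrices.
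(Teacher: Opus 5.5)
Your proof is correct and is the standard argument. The paper gives no proof of its own and cites Higham, whose proof uses the same two steps you do: the lower bound $\|\mathbf r\|_2=\|\Delta\A\,\x\|_2\le\|\Delta\A\|_2\|\x\|_2$, and attainment by the rank-one perturbation $\mathbf r\x^\top/(\x^\top\x)$.
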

\paragraph{Model of computation} Since our focus is on convergence rates rather than numerical stability, the theoretical analysis is performed in exact arithmetic. Nonetheless, extension of our results to finite-precision arithmetic is an important future direction. 
While this is beyond the scope of this work, we
note that
in standard floating-point arithmetic with unit roundoff $u$, it is in general not possible to obtain $O(u)$ forward error~\cite[Ch.~7]{Higham:2002:ASNA}, even in terms of the relative residual~\cite{arioli1992stopping}. By contrast, classical (and most robust) algorithms are known to be backward stable, that is, they compute a solution with $O(u)$ backward error for any given problem. Therefore, requiring a small backward error is arguably the \emph{only} sensible goal in finite precision, and hence it makes excellent sense to analyze convergence of a method in terms of the backward~error.

\subsection{Richardson Iteration}
We start by analyzing the backward error convergence of the classical Richardson iteration for positive semidefinite linear systems. The following result shows that after $k$ steps of Richardson iteration with step size $\|\A\|_2^{-1}$, the backward error is bounded by $1/k$, thus attaining universal convergence.
\begin{theorem}[Richardson iteration]\label{t:richardson}
    Given an $n\times n$ PSD $\A$ and $\b\in\R^n$, let
    \begin{align*}
         \x_0=\zero,\qquad \x_{k+1} = \x_k - \eta(\A\x_k - \b).
    \end{align*}
    Fix any $C\geq 1$ and let $\eta = \frac1{C\|\A\|_2}$. Then, for every $k\geq 1$ the iterates $\x_k$ satisfy
    \begin{align*}
        \berr_{\A,\b}(\x_k) \leq \frac Ck.
    \end{align*}
\end{theorem}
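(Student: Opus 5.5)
The plan is to diagonalize and reduce everything to scalar inequalities, then use Lemma~\ref{l:berr-formula}. Write $\A=\sum_i \lambda_i \u_i\u_i^\top$ with $\lambda_i\in[0,\|\A\|_2]$, expand $\b=\sum_i \beta_i\u_i$, and set $t_i=\eta\lambda_i\in[0,1/C]\subseteq[0,1]$. Unrolling the recursion gives $\x_k=\sum_{j=0}^{k-1}(\I-\eta\A)^j\eta\b$. Componentwise this means $\x_k$ has coefficient $\eta\beta_i p_k(t_i)$ with $p_k(t)=\sum_{j<k}(1-t)^j$; this equals $(1-(1-t)^k)/t$ for $t>0$ and $k$ at $t=0$. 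The residual is $\A\x_k-\b=-(\I-\eta\A)^k\b$, with coefficients $-\beta_i(1-t_i)^k$. We may assume $\x_k\neq\zero$; indeed $p_k\ge 1$ on $[0,1]$, so $\x_k\neq\zero$ whenever $\b\neq\zero$. By Lemma~\ref{l:berr-formula},
\begin{align*}
\berr_{\A,\b}(\x_k)^2=\frac{\sum_i\beta_i^2(1-t_i)^{2k}}{\|\A\|_2^2\,\eta^2\sum_i\beta_i^2p_k(t_i)^2}
=C^2\,\frac{\sum_i\beta_i^2(1-t_i)^{2k}}{\sum_i\beta_i^2p_k(t_i)^2},
\end{align*}
using $\|\A\|_2\eta=1/C$.

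It therefore suffices to prove the scalar bound $(1-t)^k\le p_k(t)/k$ for all $t\in[0,1]$. Once we have it, squaring, multiplying by $\beta_i^2$, and summing gives the ratio $\le 1/k^2$, hence $\berr\le C/k$.

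The scalar bound is immediate. Since $(1-t)^j\ge(1-t)^{k}$ for $j\le k-1$ and $1-t\in[0,1]$, we get $p_k(t)=\sum_{j=0}^{k-1}(1-t)^j\ge k(1-t)^k$. So there is essentially no hard step. The only care needed is writing $\x_k$ via the partial geometric sum rather than via $\A^{-1}$, so that singular $\A$ (with $t_i=0$) is handled uniformly. The same remark covers the nonnegativity of $1-t_i$, which uses $C\ge1$.
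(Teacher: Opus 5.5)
Your proof is correct and follows the paper's argument: you use the same closed forms $\A\x_k-\b=-(\I-\eta\A)^k\b$ and $\x_k=\eta\sum_{j<k}(\I-\eta\A)^j\b$, and the same eigenvalue-wise reduction to the scalar inequality $k(1-t)^k\le\sum_{j=0}^{k-1}(1-t)^j$ on $[0,1]$, which is exactly \eqref{eq:elementary} after clearing denominators. The only difference is how that inequality is proved: the paper uses the closed-form geometric sum plus Bernoulli's inequality, $(1-x)^k(1+kx)\le(1-x^2)^k\le 1$, and needs a separate case for $\lambda_j=0$, whereas your termwise bound $(1-t)^j\ge(1-t)^k$ is shorter and handles singular $\A$ without a case split.
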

\begin{proof}
We rewrite the residual vector after $k$ iterations of Richardson as follows:
\begin{align*}
    \A\x_k- \b &= \A(\x_{k-1} - \eta(\A\x_{k-1}-\b)) - \b \\
    &= \A\x_{k-1} - \b - \eta\A(\A\x_{k-1}-\b)\\
    &= (\I - \eta\A)(\A\x_{k-1}-\b) = -(\I-\eta\A)^k\b.
\end{align*}
Next, we perform a similar calculation to derive the expression for the iterate $\x_k$:
\begin{align*}
    \x_k = (\I - \eta\A)\x_{k-1} + \eta\b = \eta\sum_{i=0}^{k-1}(\I-\eta\A)^i\b.
\end{align*}
We can now bound the backward error $\berr_{\A,\b}(\x_k)= \frac{\|\A\x_k-\b\|_2}{\|\A\|_2\|\x_k\|_2}$ as follows:
\begin{align*}
    \frac{\|\A\x_k-\b\|_2}{\|\A\|_2\|\x_k\|_2} &= \frac{C\|(\I-\eta\A)^k\b\|_2}{\|\sum_{i=0}^{k-1}(\I-\eta\A)^i\b\|_2}\leq C\bigg\|(\I-\eta\A)^k\Big(\sum_{i=0}^{k-1}(\I-\eta\A)^i\Big)^{-1}\bigg\|_2,
\end{align*}
where in the last step we use that the matrix $\sum_{i=0}^{k-1}(\I-\eta\A)^i\succeq (\I-\eta\A)^0=\I$ is invertible. 
Since all of the matrices under the spectral norm commute, letting $\lambda_i$ denote the eigenvalues of $\A$, it follows that:
\begin{align*}
    \berr_{\A,\b}(\x_k) \leq C\max_{1\leq j\leq n}\frac{(1-\eta\lambda_j)^k}{\sum_{i=0}^{k-1}(1-\eta\lambda_j)^i}.
\end{align*}
Thus, it suffices to bound the last expression by $1/k$ for each $j$. Here, we consider two cases. First, suppose that $\lambda_j=0$. Then, $(1-\eta\lambda_j)^i = 1$ for each $i$, and so the bound follows immediately. Next, suppose that $\lambda_j>0$, and let $x = \eta\lambda_j$. Using the formula for the geometric sum, we have
\begin{align*}
    \sum_{i=0}^{k-1}(1-x)^i = \frac{1-(1-x)^k}{x},
\end{align*}
so it suffices to show the following elementary inequality,
\begin{align}
\frac{x(1-x)^k}{1-(1-x)^k}\leq 1/k,\label{eq:elementary}
\end{align}
for all $0<x\leq 1$ and $k\geq 1$. This follows from Bernoulli's inequality by observing that
\begin{align*}
    (1-x)^k(1+xk)\leq (1-x)^k(1+x)^k = (1-x^2)^k\leq 1.
\end{align*}
Rearranging the terms, we obtain \eqref{eq:elementary}, thereby concluding the proof.
\end{proof}

The above result bears close resemblance to  classical convergence guarantees for gradient descent over convex quadratic objectives \cite{bertsekas2016}. In particular, when minimizing a function $\phi(\x) = \frac12\|\x-\x_*\|_{\A}^2$ for some vector $\x_*\in\R^n$ and a PSD matrix $\A$, the gradient descent update $\x_{k+1} = \x_k - \eta\nabla \phi(\x_k)$ precisely coincides with the Richardson iteration ran on $\A$ and $\b=\A\x_*$, and the textbook gradient descent analysis (e.g., \cite{lee2019first}) yields the following bound:
\begin{align}
    \frac12\|\x_k-\x_*\|_{\A}^2 = \phi(\x_k) - \phi(\x_*) \leq \frac{C}{2k}\|\A\|_2\|\x_*\|_2^2.\label{eq:gd-analysis}
\end{align}
We can convert this into a bound on the residual norm, obtaining a different type of convergence rate that slightly resembles our backward error guarantee:
\begin{align*}
    \|\A\x_k-\b\|_2 \leq \sqrt{\|\A\|_2}\|\x_k-\x_*\|_{\A}\leq \sqrt{\frac C{ k}}\|\A\|_2\|\x_*\|_2.
\end{align*}
However, this conversion is not without a cost. In fact, Theorem \ref{t:richardson} implies a strictly sharper guarantee, since due to the monotonicity of $\|\x_k\|_2$ we can upper bound the $\|\x_*\|_2$-normalized residual by the backward error:
\begin{align}
    \frac{\|\A\x_k-\b\|_2}{\|\A\|_2\|\x_*\|_2}\leq \berr_{\A,\b}(\x_k).\label{eq:connection}
\end{align}
Nevertheless, the connection to gradient descent begs the question whether  universal convergence of backward error is more generally tied to convergence of the $\|\x_*\|$-normalized residual, and with it, potentially to the broader family of tools from convex optimization theory. Unfortunately, those two notions of convergence can exhibit dramatically different behavior, as explained below.

First, for some positive semidefinite linear systems the $\|\x_*\|$-normalized residual is not even well-defined. Concretely, when $\A$ is rank-deficient and $\b$ falls outside of its range, then such a system is inconsistent and the solution $\x_*$ does not exist. Yet, even in these cases backward error is well-defined and  converges at a $1/k$ rate when running Richardson (Theorem \ref{t:richardson} does not require an $\x_*$  to exist such that $\A\x_*=\b$).

Second, we show that for non-symmetric linear systems, even when the solution does exist, the reverse direction of \eqref{eq:connection} can fail horribly, in that, the $\|\x_*\|_2$-normalized residual enjoys a conditioning-free convergence but the backward error exhibits a highly conditioning-dependent behavior. The below result shows that, if we run Richardson iteration on the normal equations, then the gap between the backward error and the $\|\x_*\|_2$-normalized residual can be as large as a factor proportional to the condition number. In particular, this suggests that the rate of increase of $\|\x_k\|_2$ plays a crucial role in the backward error convergence.

\begin{theorem}[Richardson on normal equations]\label{t:failure}
    Given an invertible $\A\in\R^{n\times n}$ and $\b\in\R^n$, let 
    \begin{align*}
        \x_0 = \zero,\qquad \x_{k+1} = \x_k - \eta\A^\top(\A\x_k-\b).
    \end{align*}
    Fix any $C\geq 1$ and let $\eta = \frac1{C\|\A\|_2^2}$. Then, denoting $\x_*=\A^{-1}\b$, for every $k\geq 1$ the iterates $\x_k$ satisfy
    \begin{align*}
        \berr_{\A,\b}(\x_k)\leq \frac{C\kappa(\A)}{k}\qquad\text{and}\qquad \frac{\|\A\x_k-\b\|_2}{\|\A\|_2\|\x_*\|_2}\le \sqrt{\frac C{2k}}.
    \end{align*}
    Furthermore, for any invertible $\A$ there exists $\b$ such that for every $1\leq k\leq \kappa^2(\A)-1$,
    \begin{align*}
        \berr_{\A,\b}(\x_k) \geq \frac{C\kappa(\A)}{ek}
        \geq \kappa(\A)\sqrt{\frac{C}{4k}}\cdot \frac{\|\A\x_k-\b\|_2}{\|\A\|_2\|\x_*\|_2}.
    \end{align*}
\end{theorem}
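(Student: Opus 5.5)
We work in the singular value basis. Let $\A=\U\Sigmab\V^\top$ with singular values $\sigma_1\ge\dots\ge\sigma_n>0$, and write $\mathbf c=\U^\top\b$. As in the proof of Theorem~\ref{t:richardson}, unrolling the recursion gives
\[
\A\x_k-\b=-(\I-\eta\A\A^\top)^k\b,\qquad
\x_k=\eta\sum_{i=0}^{k-1}(\I-\eta\A^\top\A)^i\A^\top\b .
\]
Hence everything diagonalizes. Set $t_j=\eta\sigma_j^2\in(0,1/C]$, so that $\sigma_j=\|\A\|_2\sqrt{Ct_j}$. In these coordinates:
\begin{itemize}
\item the residual has components $(1-t_j)^kc_j$;
\item $\V^\top\x_k$ has components $\frac{1-(1-t_j)^k}{\sigma_j}c_j$;
\item $\V^\top\x_*$ has components $c_j/\sigma_j$.
\end{itemize}

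\emph{Upper bounds.} Using Lemma~\ref{l:berr-formula}, we bound the ratio $\|\A\x_k-\b\|_2/(\|\A\|_2\|\x_k\|_2)$ by the maximal componentwise ratio, exactly as for Theorem~\ref{t:richardson}. This gives
\[
\berr_{\A,\b}(\x_k)\le\max_j \sqrt{Ct_j}\,\frac{(1-t_j)^k}{1-(1-t_j)^k}.
\]
The elementary inequality \eqref{eq:elementary} yields $\frac{(1-t)^k}{1-(1-t)^k}\le\frac1{kt}$, so each term is at most $\sqrt C/(k\sqrt{t_j})$. Since $t_j\ge t_n=1/(C\kappa(\A)^2)$, this is at most $C\kappa(\A)/k$.

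For the $\|\x_*\|_2$-normalized residual, the componentwise ratio is $(1-t_j)^k\sqrt{Ct_j}\le\sqrt C\,\sqrt{t_j}\,e^{-kt_j}$. Maximizing over $t$ at $t=1/(2k)$ gives the bound $\sqrt{C/(2ek)}$. This is slightly stronger than the claimed $\sqrt{C/(2k)}$, and we keep the sharper constant because it is needed below.

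\emph{Lower bound.} Take $\b=\U\e_n$, the bottom left singular vector, so that only the coordinate with $t:=t_n=1/(C\kappa(\A)^2)$ is active. Then the backward error equals $\sqrt{Ct}\,\frac{(1-t)^k}{1-(1-t)^k}$ exactly. We bound the two factors separately:
\begin{itemize}
\item Bernoulli gives $1-(1-t)^k\le kt$.
\item For $k\le\kappa(\A)^2-1$ and $t\le1/\kappa(\A)^2$, we have $(1-t)^k\ge(1-1/\kappa^2)^{\kappa^2-1}\ge 1/e$.
\end{itemize}
Combining these gives $\berr_{\A,\b}(\x_k)\ge \sqrt{Ct}/(ekt)=C\kappa(\A)/(ek)$.

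For the second inequality, we insert the refined residual bound $\sqrt{C/(2ek)}$ from above. This gives
\[
\kappa(\A)\sqrt{\tfrac{C}{4k}}\cdot\frac{\|\A\x_k-\b\|_2}{\|\A\|_2\|\x_*\|_2}\le \frac{C\kappa(\A)}{k\sqrt{8e}}\le\frac{C\kappa(\A)}{ek},
\]
since $\sqrt{8e}\approx4.66>e$.

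The only delicate point is this final constant chase. The crude bound $\sqrt{C/(2k)}$ (or the trivial bound $1/\kappa$) on the residual is not sufficient when $C$ is near $1$, so the sharper $e^{-1/2}$ factor obtained from maximizing $\sqrt t\,e^{-kt}$ has to be retained. The rest follows the computation already done for Theorem~\ref{t:richardson}.
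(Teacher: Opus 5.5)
Your proof is correct and follows essentially the same route as the paper. You diagonalize via the SVD, bound the componentwise ratios using \eqref{eq:elementary} and the estimate $(1-t)^k\le e^{-kt}$, and take $\b$ to be the bottom left singular vector, using Bernoulli and the $e^{-1}$ bound for the lower bound. One correction: your closing remark that the crude residual bound $\sqrt{C/(2k)}$ ``is not sufficient'' is mistaken, since $\kappa(\A)\sqrt{C/(4k)}\cdot\sqrt{C/(2k)} = C\kappa(\A)/(\sqrt{8}\,k)\le C\kappa(\A)/(ek)$ because $\sqrt{8}>e$ (this is exactly the paper's final step, ``$e^2/2<4$''), so your sharper $\sqrt{C/(2ek)}$ is valid but unnecessary.
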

\begin{proof}
    Following the same argument as in the proof of Theorem \ref{t:richardson}, we can express the backward error and residual error as follows:
    \begin{align*}
        \berr_{\A,\b}(\x_k) = \frac{\sqrt{C\eta}\|\A\M_k\x_*\|_2}{\|(\I-\M_k)\x_*\|_2},\qquad \frac{\|\A\x_k-\b\|_2}{\|\A\|_2\|\x_*\|_2} = \sqrt{C\eta}\frac{\|\A\M_k\x_*\|_2}{\|\x_*\|_2},
    \end{align*}
    where  we let $\M_k = (\I - \eta\A^\top\A)^k$. Bounding the errors in terms of the singular values $\sigma_1\geq \sigma_2\geq ...\geq \sigma_n>0$ of $\A$, we obtain:
    \begin{align*}
        \berr_{\A,\b}(\x_k) \leq \sqrt{C}\,\max_i \frac{\sqrt{\eta\sigma_i^2}(1-\eta\sigma_i^2)^k}{1-(1-\eta\sigma_i^2)^k}\leq \sqrt{\frac C{\eta\sigma_n^2}}\,\max_i\frac{\eta\sigma_i^2(1-\eta\sigma_i^2)^k}{1-(1-\eta\sigma_i^2)^k}\leq C\kappa(\A)\frac1k,
    \end{align*}
    where in the last step we used \eqref{eq:elementary}. On the other hand, for the residual error, we obtain:
    \begin{align}
        \frac{\|\A\x_k-\b\|_2}{\|\A\|_2\|\x_*\|_2}\leq \sqrt C \,\max_i\sqrt{\eta\sigma_i^2(1-\eta\sigma_i^2)^{2k}}\leq \sqrt C \max_{x\geq 0} \sqrt{x\exp(-2kx)}\leq \sqrt{\frac{C}{2k}},\label{eq:residual}
    \end{align}
    where in the last step we used the elementary inequality that $x\exp(-ax)\leq 1/a$ for all $a,x>0$.

    To show that the upper bound on the backward error can be sharp up to a constant factor, let $\b$ be the left singular vector of $\A$ associated with its smallest singular value. Then, $\x^*$ is aligned with the corresponding right singular vector, so
    \begin{align*}
        \berr_{\A,\b}(\x_k) = \sqrt{C}\, \frac{\sqrt{\eta\sigma_n^2}(1-\eta\sigma_n^2)^k}{1-(1-\eta\sigma_n^2)^k}
        \geq \sqrt{C}\, \frac{\sqrt{\eta\sigma_n^2}(1-\eta\sigma_n^2)^k}{k\eta\sigma_n^2}\geq \frac{\sqrt C e^{-1}}{k\sqrt{\eta\sigma_n^2}},
    \end{align*}
    where, letting $x=\eta\sigma_n^2$, we first used Bernoulli's inequality to get $1-(1-x)^k\leq kx$, and then we used that $k\leq \kappa^2(\A)-1\leq \frac1{\eta\sigma_n^2}-1$  to obtain $(1-x)^k\geq (1-x)^{\frac1x-1} = (1+\frac x{1-x})^{-\frac{1-x}x} \geq e^{-1}$, with the last step being the standard lower bound on the constant $e$. Since $1/\sqrt{\eta\sigma_n^2}=\sqrt C\kappa(\A)$, we get:
    \begin{align*}
        \berr_{\A,\b}(\x_k) \geq \frac{C\kappa(\A)}{ek} = \kappa(\A) \sqrt{\frac{2C}{e^2k}}\sqrt{\frac{C}{2k}}.
    \end{align*}
    Applying the upper bound \eqref{eq:residual} and using that $e^2/2<4$ concludes the proof.
\end{proof}
We note that the $O(1/\sqrt k)$ upper bound on the $\|\x_*\|_2$-normalized residual can also be obtained directly from the gradient descent guarantee \eqref{eq:gd-analysis}. Also, the regime $1\leq k\leq \kappa^2(\A)-1$, where the backward error may be much larger than the $\|\x_*\|_2$-normalized residual, is natural since when $k\geq\kappa^2(\A)$, the solution norm $\|\x_k\|_2$ becomes comparable to $\|\x_*\|_2$ and both metrics start to converge at the fast exponential rate, $(1-\frac1{\kappa^2(\A)})^k$.

The failure of Richardson on normal equations, illustrated by Theorem \ref{t:failure}, perhaps further highlights the remarkable nature of the universal $1/k$ backward error rate in the positive semidefinite case (Theorem~\ref{t:richardson}). Can this rate be further improved, for instance by employing Krylov subspace methods?

\subsection{Faster Rates via Regularized Krylov Solvers}

We next show that one can obtain faster-than-Richardson universal backward error rates using existing solvers such as CG and MINRES by relying on the perturbation interpretation of backward error. The key observation is that, by the definition of backward error, we do not need to solve the original problem directly, but rather, we can solve a ``perturbed'' problem that is chosen to have better properties for our method. 

The first ingredient in our analysis is the observation that a backward error bound for one linear system can be easily converted to a bound for another system based on a nearby input matrix.
\begin{lemma}\label{l:composition}
    Given $\A,\tilde\A\in\R^{n\times n}$, $\b\in\R^n$, and $\epsilon\geq 0$ such that $\|\A-\tilde\A\|_2\leq \epsilon\|\A\|_2$, for any $\x\in\R^n$, we have
    \begin{align*}
        \berr_{\A,\b}(\x)\leq (1+\epsilon)\berr_{\tilde\A,\b}(\x) + \epsilon.
    \end{align*}
\end{lemma}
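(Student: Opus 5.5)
The plan is to use the definition of backward error directly, rather than the residual formula of Lemma~\ref{l:berr-formula}, because the definition handles the case $\x=\zero$ and any infeasibility uniformly. First I dispose of the degenerate cases. If $\berr_{\tilde\A,\b}(\x)$ is $+\infty$, meaning no feasible perturbation exists (this happens when $\x=\zero$ and $\b\neq\zero$), the inequality is trivial. The same convention covers $\tilde\A=\zero$, where the normalization by $\|\tilde\A\|_2$ is undefined. So I may assume $\tilde\A\neq\zero$ and that there is a perturbation $\Delta\tilde\A$ with $(\tilde\A+\Delta\tilde\A)\x=\b$ and $\|\Delta\tilde\A\|_2=\berr_{\tilde\A,\b}(\x)\,\|\tilde\A\|_2$. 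The minimum is attained: this follows from Lemma~\ref{l:berr-formula} when $\x\neq\zero$, and otherwise by compactness, or one can take an $\eta$-optimal perturbation and let $\eta\to0$.

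Next I build a feasible perturbation for the original system. Set $\Delta\A:=(\tilde\A-\A)+\Delta\tilde\A$. Then $(\A+\Delta\A)\x=(\tilde\A+\Delta\tilde\A)\x=\b$, so $\Delta\A$ is feasible in the definition of $\berr_{\A,\b}(\x)$. The triangle inequality then gives
\begin{align*}
\berr_{\A,\b}(\x)\le \frac{\|\tilde\A-\A\|_2+\|\Delta\tilde\A\|_2}{\|\A\|_2}\le \epsilon+\berr_{\tilde\A,\b}(\x)\,\frac{\|\tilde\A\|_2}{\|\A\|_2}.
\end{align*}

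Finally, again by the triangle inequality, $\|\tilde\A\|_2\le\|\A\|_2+\|\tilde\A-\A\|_2\le(1+\epsilon)\|\A\|_2$. Substituting this bound yields the claimed $(1+\epsilon)\berr_{\tilde\A,\b}(\x)+\epsilon$. There is no real obstacle in this argument. The only points needing care are the normalization mismatch between $\|\tilde\A\|_2$ and $\|\A\|_2$, which is exactly where the factor $(1+\epsilon)$ comes from, and the degenerate cases handled at the start.
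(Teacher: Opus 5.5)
Your proof is correct and has the same skeleton as the paper's. Both use a triangle inequality to absorb $\A-\tilde\A$, then $\|\tilde\A\|_2\le(1+\epsilon)\|\A\|_2$ to switch normalizations. The difference is the level at which you work.

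The paper goes through Lemma~\ref{l:berr-formula} and bounds residuals: $\|\A\x-\b\|_2\le\|\tilde\A\x-\b\|_2+\epsilon\|\A\|_2\|\x\|_2$. This is a two-line computation that never needs to discuss whether the minimum is attained. It does, however, tacitly assume $\x\neq\zero$, and it depends on the closed-form 2-norm expression.

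Your composition of perturbations, $\Delta\A=(\tilde\A-\A)+\Delta\tilde\A$, uses only the definition and the triangle inequality. So it carries over verbatim to any norm used to measure the perturbation, with the hypothesis stated in that norm (e.g.\ the entrywise norm mentioned in the introduction). It also adapts directly to variants that perturb $\b$ as well, and it covers $\x=\zero$ through the $+\infty$ convention.

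The price is that you must justify attainment of the minimum. Your $\eta$-optimal fallback settles this. In fact, whenever the constraint is feasible, the feasible set is a closed affine set on which the norm is coercive, so the minimum is attained. Your separate compactness remark for $\x=\zero$ is therefore unnecessary: there, with $\b\neq\zero$, the problem is simply infeasible.
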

\begin{proof}
    First, observe that 
    \begin{align*}
        \|\A\x-\b\|_2 = \|\tilde\A\x-\b + (\A-\tilde\A)\x\|_2\leq 
        \|\tilde\A\x-\b\|_2 + \epsilon\|\A\|_2\|\x\|_2.
    \end{align*}
    Now, using Lemma \ref{l:berr-formula} and that $\|\tilde\A\|_2\leq (1+\epsilon)\|\A\|_2$ we obtain:
    \begin{align*}
        \berr_{\A,\b}(\x) = \frac{\|\A\x-\b\|_2}{\|\A\|_2\|\x\|_2} \leq \frac{\|\tilde\A\x-\b\|_2}{\|\A\|_2\|\x\|_2} + \epsilon
        \leq (1+\epsilon)\frac{\|\tilde\A\x-\b\|_2}{\|\tilde\A\|_2\|\x\|_2} + \epsilon,    
    \end{align*}
    which completes the proof.    
\end{proof}
Lemma \ref{l:composition} implies that if we are seeking a backward error bound for a PSD linear system $\A\x=\b$, then it suffices to analyze the error for a slightly regularized system $(\A+\epsilon\|\A\|_2\I)\x=\b$. Such regularization has the effect that it improves the condition number of the system sufficiently to ensure fast convergence of standard solvers in forward error. Then, it suffices to bound the backward error in terms of the forward error as follows.
\begin{lemma}\label{l:forward}
    Given a square consistent linear system $\A\x^*=\b$ and $\epsilon\in(0,1)$, suppose that $\x$ satisfies:
    \begin{align}
        \frac{\|\x-\x_*\|_{\M}}{\|\x_*\|_{\M}}\leq \epsilon,\quad\text{where }\M=(\A^\top\A)^\alpha\text{ for some }\alpha\in[0,1].\label{eq:forward-lemma}
    \end{align}
    Then, $\x$ satisfies the following backward error guarantee:
    \begin{align*}
    \berr_{\A,\b}(\x) \leq \frac{\epsilon}{1-\epsilon}.
    \end{align*}    
\end{lemma}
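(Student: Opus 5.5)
We use the formula of Lemma~\ref{l:berr-formula}, $\berr_{\A,\b}(\x)=\|\A\x-\b\|_2/(\|\A\|_2\|\x\|_2)$, and bound the numerator from above and $\|\x\|_2$ from below in terms of $\x_*$. Write $\e=\x-\x_*$. Then $\A\x-\b=\A\e$. Since $\|\e\|_{\M}\le\epsilon\|\x_*\|_{\M}<\|\x_*\|_{\M}$, we have $\x\neq\zero$, so the formula applies.

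\textbf{Step 1 (numerator).} Use the SVD $\A=\U\Sigmab\V^\top$, so that $\M=\V\Sigmab^{2\alpha}\V^\top$. In the coordinates $\V^\top\e$, the ratio $\|\A\e\|_2/\|\e\|_{\M}$ is a weighted ratio of the entries $\sigma_i^2$ versus $\sigma_i^{2\alpha}$. Hence
\[
\|\A\e\|_2\le \sigma_{\max}^{1-\alpha}\|\e\|_{\M}=\|\A\|_2^{1-\alpha}\|\e\|_{\M}.
\]
This uses $1-\alpha\ge0$; where $\sigma_i=0$, the corresponding entries contribute zero to both sides. We conclude
\[
\|\A\x-\b\|_2\le \epsilon\,\|\A\|_2^{1-\alpha}\|\x_*\|_{\M}.
\]

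\textbf{Step 2 (denominator).} By the triangle inequality in the $\M$-seminorm,
\[
\|\x\|_{\M}\ge\|\x_*\|_{\M}-\|\e\|_{\M}\ge(1-\epsilon)\|\x_*\|_{\M}.
\]
Also $\|\x\|_{\M}\le\|\A\|_2^{\alpha}\|\x\|_2$, so
\[
\|\x\|_2\ge \|\A\|_2^{-\alpha}(1-\epsilon)\|\x_*\|_{\M}.
\]

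\textbf{Step 3 (combine).} Substituting both bounds into the formula gives
\[
\berr_{\A,\b}(\x)\le \frac{\epsilon\|\A\|_2^{1-\alpha}\|\x_*\|_{\M}}{\|\A\|_2\cdot\|\A\|_2^{-\alpha}(1-\epsilon)\|\x_*\|_{\M}}=\frac{\epsilon}{1-\epsilon}.
\]
Here $\|\x_*\|_{\M}>0$. Indeed, if it were zero, then for $\alpha>0$ we would have $\A\x_*=\zero=\b$, contradicting $\b\neq\zero$; for $\alpha=0$ it would mean $\x_*=\zero$, which gives the same contradiction.

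\textbf{Main obstacle.} The only delicate point is Step 1. One must handle a singular $\A$, where $\M$ is only a seminorm and $\x_*$ may have a null-space component. Working in SVD coordinates handles this, since both sides of each inequality ignore the null-space directions in a compatible way.
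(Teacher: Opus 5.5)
Your proposal is correct and follows the paper's proof essentially step for step. Both use the SVD bound $\|\A(\x-\x_*)\|_2\le\|\A\|_2^{1-\alpha}\|\x-\x_*\|_{\M}$, the reverse triangle inequality in the $\M$-seminorm together with $\|\x\|_{\M}\le\|\A\|_2^{\alpha}\|\x\|_2$, and then substitution into Lemma~\ref{l:berr-formula}; your explicit checks that $\x\neq\zero$ and $\|\x_*\|_{\M}>0$ are small rigor points that the paper leaves implicit.
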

\begin{remark}
Setting $\alpha=0$ in \eqref{eq:forward-lemma} recovers a $2$-norm forward error guarantee, while $\alpha=1$ corresponds to the relative residual. If $\A$ is PSD then $\alpha=1/2$ corresponds to the $\A$-norm error. Lemma \ref{l:forward} implies that each of these forward error guarantees immediately implies a nearly matching guarantee on backward error.
\end{remark}
\begin{proof}
    Let $\A=\U\Sigmab\V^\top$ be the singular value decomposition of $\A$. Then, $\M = \V\Sigmab^{2\alpha}\V^\top$, and we can bound the residual $\|\A\x-\b\|_2$ using the $\M$-norm error as follows:
    \begin{align*}
        \|\A\x-\b\|_2 
        &= \|\U\Sigmab\V^\top(\x-\x_*)\|_2 \\
        &= \|\U\Sigmab^{1-\alpha} \V^\top\V\Sigmab^{\alpha}\V^\top(\x-\x_*)\|_2
        \\
        &\leq \|\U\Sigmab^{1-\alpha}\V^\top\|_2 \|\x-\x_*\|_{\M} = \|\A\|_2^{1-\alpha}\|\x-\x_*\|_{\M}.
    \end{align*}    
    Next, we seek to provide a lower bound on the $2$-norm of the vector $\x$. We do this by first observing that:
    \begin{align*}
        \|\x_*\|_{\M} \leq \|\x-\x_*\|_{\M} + \|\x\|_{\M} \leq \epsilon\|\x_*\|_{\M} + \|\x\|_{\M}.
    \end{align*}
    Rearranging the terms, we conclude that 
    \begin{align*}
    (1-\epsilon)\|\x_*\|_{\M}\leq \|\x\|_{\M}\leq \|\M\|^{1/2}\|\x\|_2 = \|\A\|_2^\alpha\|\x\|_2.
    \end{align*}
    Combining these with Lemma \ref{l:berr-formula}, we have:
    \begin{align*}
        \berr_{\A,\b}(\x) = \frac{\|\A\x-\b\|_2}{\|\A\|_2\|\x\|_2} \leq \frac{\|\A\|_2^{1-\alpha}\|\x-\x_*\|_{\M}}{\|\A\|_2^{1-\alpha}(1-\epsilon)\|\x_*\|_{\M}}\leq \frac{\epsilon}{1-\epsilon}.
    \end{align*}
\end{proof}

Putting these observations together, we show that by solving a slightly regularized version of the linear system, we can attain a better universal rate with both CG and MINRES than with Richardson. We note that the strategy described below can be applied to essentially any solver with a forward error convergence guarantee.

\begin{theorem}[Regularized CG/MINRES]\label{t:perturbed-minres}
    Given $n\times n$ positive semidefinite~$\A$ and $\b\in\R^n$, fix $k\geq 9$. If $\hat\x$ is produced from at least $k$ steps of CG or MINRES on input $\A+2(\frac{\log k}{k})^2\|\A\|_2\I$ and $\b$, then
    \begin{align*}
        \berr_{\A,\b}(\hat\x)\leq 
        5\Big(\frac{\log k}{k}\Big)^2.
    \end{align*}
    \end{theorem}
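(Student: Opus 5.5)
Let $\delta = 2(\frac{\log k}{k})^2$ and $\tilde\A = \A + \delta\|\A\|_2\I$. Then $\|\A-\tilde\A\|_2 = \delta\|\A\|_2$, so Lemma~\ref{l:composition} with $\epsilon=\delta$ gives
\begin{align*}
\berr_{\A,\b}(\hat\x)\leq (1+\delta)\berr_{\tilde\A,\b}(\hat\x)+\delta .
\end{align*}
The task therefore reduces to showing $\berr_{\tilde\A,\b}(\hat\x)\lesssim (\log k/k)^2$ with a small enough constant that the total stays below $5(\log k/k)^2$. Since $\tilde\A$ is positive definite, the system $\tilde\A\x_*=\b$ is consistent with $\x_*=\tilde\A^{-1}\b$. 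We then apply Lemma~\ref{l:forward}: for CG we use $\alpha=1/2$ (the $\tilde\A$-norm), and for MINRES we use $\alpha=1$ (the relative residual).

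Next we bound the forward error using the standard Chebyshev-based guarantees. The condition number satisfies
\begin{align*}
\kappa(\tilde\A)\leq \frac{(1+\delta)\|\A\|_2}{\delta\|\A\|_2}=1+\frac1\delta .
\end{align*}
After $k$ steps, both the CG bound (in the $\tilde\A$-norm) and the MINRES bound (on the relative residual) give error at most
\begin{align*}
2\Big(\frac{\sqrt{\kappa}-1}{\sqrt{\kappa}+1}\Big)^k \leq 2\exp\!\big(-2k/(\sqrt\kappa+1)\big).
\end{align*}
Running more than $k$ steps only helps, since both methods are monotone in their respective norms. We have $\sqrt{\kappa}\leq\sqrt{1+1/\delta}$, and $1/\sqrt\delta = k/(\sqrt2\log k)$. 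So $\sqrt\kappa+1\approx k/(\sqrt2\log k)$ up to lower-order terms, and the exponent is about $2\sqrt2\log k$. The forward error is then roughly $2k^{-2\sqrt2}$, which is much smaller than $(\log k/k)^2$ for $k\ge 9$. Lemma~\ref{l:forward} converts this to $\berr_{\tilde\A,\b}(\hat\x)\le \epsilon_f/(1-\epsilon_f)$, where $\epsilon_f$ denotes this forward error.

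The main obstacle is the constant bookkeeping at small $k$, starting from $k=9$. There are two sources of slack to control: the term $\sqrt{1+1/\delta}+1$ versus $1/\sqrt\delta$, and the factor $1+\delta$. We would bound $\sqrt{1+1/\delta}+1\le 1/\sqrt\delta + 2$, using $\sqrt{1+y}\le \sqrt y + 1$. This gives the exponent
\begin{align*}
\frac{2k}{1/\sqrt\delta+2} = \frac{2\sqrt2\, k\log k}{k+2\sqrt2\log k}.
\end{align*}
For $k\ge 9$ we have $2\sqrt2\log k\le k$, so the exponent is at least $\sqrt2\log k$. Hence the forward error is at most $2k^{-\sqrt2}$. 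This falls short of $(\log k/k)^2$, so we need a sharper estimate. We would refine it using $\log k/k\le 1/e$, i.e.\ $2\sqrt2\log k\le 0.75\,k$, which makes the exponent at least $1.6\log k$. The forward error is then at most $2k^{-1.6}$; for $k\ge 9$ this is still not obviously below $2.5(\log k)^2/k^2$.

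So the decisive step is to verify the numeric inequality
\begin{align*}
(1+\delta)\frac{\epsilon_f}{1-\epsilon_f}+\delta\le 5\Big(\frac{\log k}{k}\Big)^2 .
\end{align*}
This amounts to showing $\epsilon_f\lesssim 2.9(\log k/k)^2$, using the exact Chebyshev expression rather than the crude exponential bound. We expect this to be checked directly at $k=9$ (taking care whether $\log$ means natural log) and then by monotonicity for large $k$, where the ratio of the exponent to $2\sqrt2\log k$ tends to $1$ and the bound $k^{-2.8}\ll k^{-2}$ holds easily. If this check fails at the smallest values of $k$, we would instead use the tighter bound $\big(\frac{\sqrt\kappa-1}{\sqrt\kappa+1}\big)^k\le \exp(-2k/\sqrt{\kappa})$ when $\kappa\ge1$, which holds because $\tanh^{-1}(1/\sqrt\kappa)\ge 1/\sqrt\kappa$. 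This removes the $+1$ in the denominator and gives an exponent close to $2\sqrt2\log k\cdot(1-O(\delta))$. That yields $\epsilon_f\le 2k^{-2.7}$, which is comfortably below $2.9(\log k)^2/k^2$ for all $k\ge9$.
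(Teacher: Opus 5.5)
Your proposal has the same structure as the paper's proof (regularize, bound $\kappa(\tilde\A)\le 1+1/\delta$, apply the Chebyshev CG/MINRES bound, then Lemma~\ref{l:forward} and Lemma~\ref{l:composition}). Your $\tanh^{-1}$ fallback does close it: the exponent is $2\sqrt2\log k/\sqrt{1+\delta}\ge 2.67\log k$, so $\epsilon_f\le 0.1(\log k/k)^2$ for all $k\ge 9$. Your earlier estimates are too lossy, and ``$\log k/k\le 1/e$'' does not give $2\sqrt2\log k\le 0.75k$; use $\log k/k\le \log 9/9$ instead. The paper's endgame is shorter: for $\delta\le 1/8$ one has $\sqrt{1+1/\delta}+1\le\sqrt{2/\delta}=k/\log k$, so the exponent is at least $2\log k$, giving $\epsilon_f\le 2/k^2\le\delta$ and hence $\berr_{\A,\b}(\hat\x)\le 2\delta/(1-\delta)\le\frac{32}{7}(\log k/k)^2$.
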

\begin{proof}
    Define $\epsilon = 2(\frac{\log k}{k})^2$. The condition number of the matrix $\tilde\A = \A+\epsilon\|\A\|_2\I$ can be bounded by:
    \begin{align*}
        \kappa(\tilde\A) \leq \frac{\|\A\|_2 + \epsilon\|\A\|_2}{\epsilon\|\A\|_2} = 1 + 1/\epsilon.
    \end{align*}
So, letting $\tilde\x_*=\tilde\A^{-1}\b$, CG and MINRES initialized at $\zero$ after $k$ or more iterations return $\hat\x$ such that \cite{trefbau}:
    \begin{align}
        \frac{\|\hat\x - \tilde\x_*\|_{\M}}{\|\tilde\x_*\|_{\M}}
        \leq 2\bigg(\frac{\sqrt{1+1/\epsilon} -1}{\sqrt{1+1/\epsilon}+1}\bigg)^k
        \leq 2\exp\Big(-\frac{2k}{\sqrt{1+1/\epsilon}+1}\Big),
        \label{eq:perturbed-bound}
    \end{align}
where we use $\M=\tilde\A^2$ for MINRES and $\M=\tilde\A$ for CG. 

We next show that given our choice of $\epsilon$ and $k$, the right-hand side is bounded by $\epsilon$. For this, it suffices to show that $2k\geq (\sqrt{1+1/\epsilon}+1)\log(2/\epsilon)$. It is easy to verify that $\sqrt{1+1/\epsilon}+1\leq \sqrt{2/\epsilon}$ for any $0<\epsilon\leq 1/8$, so a sufficient condition is $k\geq \sqrt{2/\epsilon}\log(\sqrt{2/\epsilon})$. By our choice of $\epsilon$ we have:
\begin{align*}
    \sqrt{2/\epsilon}\log(\sqrt{2/\epsilon}) = \frac{k}{\log k}\log\Big(\frac{k}{\log k}\Big) \leq k.
\end{align*}
In order to ensure that $\epsilon\leq 1/8$, it suffices to assume that $k\geq 9$. Thus, we have shown that the right-hand side of \eqref{eq:perturbed-bound} can be bounded by $\epsilon$. Applying Lemma \ref{l:composition}, and then Lemma \ref{l:forward}, we conclude that
\begin{align*}
    \berr_{\A,\b}(\hat\x) \leq (1+\epsilon)\berr_{\tilde\A,\b}(\hat\x) + \epsilon \leq (1+\epsilon)\frac{\epsilon}{1-\epsilon} + \epsilon = \frac{2\epsilon}{1-\epsilon}.
\end{align*}
Plugging in the expression for $\epsilon$ and using that $\epsilon\leq 1/8$, we obtain the claim.
\end{proof}

The approach given in Theorem~\ref{t:perturbed-minres} has some clear drawbacks. First, one has to commit to a target backward error at the beginning in order to choose the regularization level. To obtain a more accurate estimate, one would have to restart CG or MINRES with a different regularizer. Second, the indirect nature of this procedure inevitably incurs some overhead, which is manifested by the spurious logarithmic factors. We address all of these issues in the next section by proposing a dedicated Krylov solver for backward error.

\section{MINBERR: Optimal Solver for PSD Systems}
Here, we develop and analyze \algpsd\ (Algorithm \ref{alg:MINBERR}), a Krylov method 
for a PSD system $\A\x=\b$,
 that is explicitly designed to minimize the backward error. That is, letting $\Kc_{k}(\A,\b) = \Span\{\b,\A\b,...,\A^{k-1}\b\}$ be the Krylov subspace of rank $k$, consider
\begin{align}
  \x_{k} :=   \argmin_{\x\in \Kc_{k}(\A,\b)}\berr_{\A,\b}(\x).\label{eq:minberr}
 \end{align}       

\subsection{Convergence Analysis}
\label{sec:minberr_conv}

First, we show that the MINBERR iterate sequence \eqref{eq:minberr} attains a universal backward error convergence rate of $O(1/k^2)$.

\begin{theorem}[MINBERR]\label{t:minberr}
For any $n\times n$ PSD matrix $\A$, vector $\b\in\R^n$ and $k \ge 2$, the MINBERR iterate sequence \eqref{eq:minberr} satisfies:
\begin{align*}
    \berr_{\A,\b}(\x_k)\ \leq \frac 3{k^2-1}.
\end{align*}
\end{theorem}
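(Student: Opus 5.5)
Since $\x_k$ minimizes backward error over $\Kc_k(\A,\b)$, it suffices to exhibit one vector $\x=p(\A)\b$ with $p$ a polynomial of degree at most $k-1$ and $\berr_{\A,\b}(\x)\le 3/(k^2-1)$. Following the proof of Theorem~\ref{t:richardson}, the residual is $\A\x-\b=-r(\A)\b$ with $r(t)=1-tp(t)$, so $r$ has degree at most $k$ and $r(0)=1$. By Lemma~\ref{l:berr-formula}, normalizing $\|\A\|_2=1$ so that the eigenvalues $\lambda_j$ lie in $[0,1]$, we get
\begin{align*}
\berr_{\A,\b}(\x)=\frac{\|r(\A)\b\|_2}{\|p(\A)\b\|_2}\le \max_{t\in[0,1]}\frac{|r(t)|}{|p(t)|},
\end{align*}
provided $p(t)$ has no zeros on $[0,1]$. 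This holds because all the matrices commute, so $\|r(\A)\b\|_2\le \max_j |r(\lambda_j)/p(\lambda_j)|\,\|p(\A)\b\|_2$. The problem thus reduces to a scalar one: find $r$ of degree $k$ with $r(0)=1$, for which $p(t)=(1-r(t))/t$ is nonvanishing on $[0,1]$ and
\begin{align*}
|r(t)|\le \frac{3}{k^2-1}\,|p(t)|\quad\text{for all } t\in[0,1].
\end{align*}
Since $|p(t)|$ is of order $1/t$ for $t$ away from $0$, the natural target is $t|r(t)|\lesssim |1-r(t)|/k^2$.

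The candidate I will use is a shifted Chebyshev-type polynomial, which should improve Richardson's $(1-t)^k$ quadratically, as acceleration does. Concretely, take $r(t)=T_k(1-2t+\delta)/T_k(1+\delta)$ for a suitable shift $\delta$, or alternatively a Fejér/Jackson-type polynomial built from $\sin(k\theta)/(k\sin\theta)$ with $t=\sin^2(\theta/2)$. A cleaner option is to write the Chebyshev polynomial in trigonometric form, so that the ratio $t\,r(t)/(1-r(t))$ becomes an explicit trigonometric expression. The constant $3/(k^2-1)$ suggests a quantity like $\max_\theta \sin^2\theta\,/(k^2-1)$, arising from an identity such as $1-T_k(\cos\theta)=2\sin^2(k\theta/2)$ together with a Chebyshev-$U$ derivative bound. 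Using the Chebyshev polynomial on $[0,1]$ with a small shift ensures that $|r|\le 1/T_k(1+\delta)$ decays once $t\gtrsim 1/k^2$.

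I will split the analysis into two regimes. For small $t\lesssim 1/k^2$, $r$ stays close to $1$, so $p(t)\approx -r'(0)\asymp k^2$. In that case $|r|/|p|\lesssim 1/k^2$ follows directly from $|r|\le1$ and a lower bound on $p$ via the Markov-type value $r'(0)=-2T_k'(1+\delta)/T_k(1+\delta)\approx -2k^2$. For $t\gtrsim 1/k^2$, I will use $|r(t)|\le$ a small constant, so $|1-r(t)|\ge 1/2$ and $|p(t)|\ge 1/(2t)\ge 1/2$. Keeping $|r|/|p|\le 3/(k^2-1)$ on this range then requires $|r(t)|\lesssim 1/(k^2 t)$, which means using the oscillation bound for $T_k$ rather than a uniform one.

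The main obstacle is getting the exact constant $3/(k^2-1)$ and, in particular, proving that $p$ does not vanish and that $1-r$ stays bounded away from zero uniformly in the intermediate region. My plan is to choose $r$ so that $1-r(t)$ is manifestly nonnegative, for example $r(t)=T_k(\cdots)$ with range in $[-1,1]$ so that $1-r\ge0$ vanishes only at $t=0$. I will then express $p$ through $\frac{1-\cos(k\theta)}{1-\cos\theta}=\big(\frac{\sin(k\theta/2)}{\sin(\theta/2)}\big)^2$, the Fejér kernel, which is positive. This turns the target inequality into a trigonometric inequality, to be checked by elementary means. If the exact constant fails, I would fall back on a slightly different extremal polynomial of the same Chebyshev family.
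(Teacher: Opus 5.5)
Your reduction matches the paper's. With $\|\A\|_2=1$ and $r(t)=1-tp(t)$, it suffices to find $p\in\Pc_{k-1}$ with no zeros on $[0,1]$ and $t|r(t)|/|1-r(t)|\le 3/(k^2-1)$ there; the paper writes this quantity as $|x-1/p(x)|$. The gap is that you never exhibit such a $p$, and each concrete candidate you name fails.

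For $r(t)=T_k(1-2t)$, the claim that $1-r$ vanishes only at $t=0$ is false. $T_k$ returns to the value $1$ at $t=\sin^2(\pi j/k)$, $1\le j\le k/2$, where $p=0$ while $r=1$, so your ratio is infinite. Correspondingly, the Fejér kernel $2(1-\cos k\theta)/(1-\cos\theta)$ that you identify with $p$ is nonnegative, not positive.

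The shifted choice $r(t)=T_k(1+\delta-2t)/T_k(1+\delta)$ does keep $p>0$. However, it equioscillates with the fixed amplitude $1/T_k(1+\delta)$ up to $t\approx 1$, so the ratio there is about $1/T_k(1+\delta)$ and you need $T_k(1+\delta)\gtrsim k^2$. Writing $1+\delta=\cosh\phi$, this forces $k\phi\gtrsim 2\log k$. Meanwhile, as $t\to0$ the ratio tends to $1/|r'(0)|=\sinh\phi/(2k\tanh k\phi)\ge\phi/(2k)$. So for large $k$ no choice of $\delta$ does better than order $\log k/k^2$, which is the same logarithmic loss as in Theorem~\ref{t:perturbed-minres}.

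Finally, $\sin(k\theta)/(k\sin\theta)$ with $t=\sin^2(\theta/2)$ equals $\pm1$ at $t=1$, because the angle is off by a factor of two. It therefore gives no decay at the right end.

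The missing idea is to make the Fejér kernel the \emph{residual} polynomial rather than $p$. Take $t=\sin^2\gamma$ with $\gamma\in[0,\pi/2]$, take $\ell\in\{k,k+1\}$, and set $r(t)=\sin^2(\ell\gamma)/(\ell^2\sin^2\gamma)=(1-T_\ell(1-2t))/(2\ell^2t)$. This is the paper's $G(x)/x$; the paper restricts to even $\ell$ only because it writes $G$ via $T_\ell(2x-1)$. This $r$ has degree $\ell-1\le k$ and $r(0)=1$, so $p=(1-r)/t\in\Pc_{k-1}$. Moreover $0\le r<1$ on $(0,1]$, so $p>0$. It also satisfies $r(t)\le 1/(\ell^2t)$, which is exactly the $1/(k^2t)$ envelope decay you said was needed and which your Chebyshev candidates lack.

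With this choice, $t\,r/(1-r)=F_\ell(\gamma)/\ell^2$. Getting the constant $3$ still requires the argument of Lemma~\ref{lem:sharp-trig-bound}:
on $(\pi/(2\ell),\pi/2]$, $F_\ell$ is dominated by its value at the fold-back point $\frac1\ell\arcsin|\sin(\ell\gamma)|$;
on $[0,\pi/(2\ell)]$, $F_\ell$ is decreasing because $x^3\cos x/\sin^3x$ is decreasing.
Hence the supremum is the $\gamma\to0$ limit, $1/|r'(0)|=3/(\ell^2-1)\le 3/(k^2-1)$. A two-regime split with crude bounds, as you outline, would at best give $O(1/k^2)$ with a worse constant.
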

\begin{remark}
    Numerical evidence suggests that the minimax optimal constant~is~$2$.
\end{remark}
\begin{proof}
    First, define $\bar\A = \A/\|\A\|_2$ and $\bar\b=\b/\|\A\|_2$. Note that $\Kc_{k}(\bar\A,\bar\b) = \Kc_{k}(\A,\b)$ and also $\berr_{\bar\A,\bar\b}(\x) = \berr_{\A,\b}(\x)$ for all $\x$, so without loss of generality we can assume that $\|\A\|_2=1$.
    
    Next, let $\Pc_{k-1}$ denote the set of all polynomials $p(x)=c_0+c_1x + \cdots +c_{k-1}x^{k-1}$ of degree at most $k-1$ with real-valued coefficients.  Then, using Lemma \ref{l:berr-formula},
    \begin{align*}
        \min_{\x\in \Kc_{k}(\A,\b)}\berr_{\A,\b}(\x) 
        &= \min_{p\in\Pc_{k-1}} \frac{\|\A p(\A)\b - \b\|_2}{\|p(\A)\b\|_2}
        \leq \min_{p\in\Pc_{k-1}}\|(\A p(\A) - \I)p(\A)^{-1}\|_2.
    \end{align*}
    Since $\A$ and $p(\A)$ commute, we can express the above in terms of $\A$'s eigenvalues $1=\lambda_1\geq ...\geq \lambda_n\ge 0$:
    \begin{align*}
        \big\|(\A p(\A) - \I)p(\A)^{-1}\big\|_2 = \max_{i} \Big|\frac{\lambda_i p(\lambda_i) - 1}{p(\lambda_i)}\Big| \leq \max_{x\in[0,1]} \Big| x - \frac1{p(x)}\Big|. 
    \end{align*}

It remains to show that the latter term is upper-bounded by $3/(k^{2}-1)$. We note that this order is tight: actually, both lower and upper bounds of $O(1/k^{2})$ can be concluded from a more general polynomial approximation result of Levin and Saff from 1988 \cite{levin1988degree}, who obtained it in a context unrelated to Krylov subspace theory:
  \begin{lemma}[Theorem 2.3, \cite{levin1988degree}]\label{l:levin1988}
        For any $\alpha>0$, there exist positive constants $B_\alpha,C_\alpha$ such that for any $k=1,2,...$, the following holds:
        \begin{align*}
            \frac{B_\alpha}{k^{2\alpha}}\leq \min_{p\in\Pc_k}\max_{x\in[0,1]}\Big|x^\alpha - \frac1{p(x)}\Big|\leq \frac{C_\alpha}{k^{2\alpha}}.
        \end{align*}
    \end{lemma}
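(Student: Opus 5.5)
The claim is a two-sided bound, so I would prove the lower and upper bounds separately. I would first treat the case $\alpha=1$ in detail, since that is the only case used in the proof of Theorem~\ref{t:minberr}, and then say how the argument should generalize.

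\textbf{Lower bound via a Markov-type inequality.} Suppose $\max_{x\in[0,1]}|x^\alpha-1/p(x)|\le\eta$ with $\eta$ small. Then at $x=1$ we get $p(1)\approx 1$, while on $[0,\eta^{1/\alpha}]$ we only know $1/p(x)\le 2\eta$, that is, $p\ge 1/(2\eta)$ there. Between $x=\eta^{1/\alpha}$ and $x\approx (2\eta)^{1/\alpha}$, the function $1/p$ must rise from about $\eta$ to about $2\eta$. So $p$ drops by a factor of order one over an interval of length about $\eta^{1/\alpha}$, at a height of order $1/\eta$, while staying bounded on $[0,1]$ by roughly $1/\eta$.

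I would then apply Markov's inequality near the endpoint $0$ of $[0,1]$. It gives $|p'(x)|\lesssim k^2\|p\|_\infty$ on $[0,c/k^2]$. Combined with the previous paragraph, this forces $\eta^{1/\alpha}\gtrsim 1/k^2$, which is the claimed lower bound $\eta\gtrsim k^{-2\alpha}$. The step I would need to make precise is comparing the drop of $p$ with the sup of $|p'|$. This uses the endpoint form of Markov's inequality, $|p'(x)|\le \frac{k}{\sqrt{x(1-x)}}\|p\|$, which is the Bernstein inequality.

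\textbf{Upper bound for $\alpha=1$: explicit Chebyshev construction.} Write $q(x)=1-xp(x)$, so that $q\in\Pc_{k+1}$ and $q(0)=1$. Then
\begin{align*}
\Big|x-\frac1{p(x)}\Big| = \frac{x\,|q(x)|}{|1-q(x)|}.
\end{align*}
It therefore suffices to build a $q$ with $q(0)=1$, with $1-q(x)\ge c\min(1,k^2x)$ on $[0,1]$, and with $|q(x)|\lesssim \min(1,1/(k^2x))$. I would take a damped Chebyshev polynomial,
\begin{align*}
q(x)=\frac{T_{m}\big((1+\gamma)(1-2x)\big)}{T_m(1+\gamma)}\cdot r(x),
\end{align*}
with $m\approx k$, $\gamma\approx c/k^2$ so that $T_m(1+\gamma)$ is a constant greater than $1$, and $r$ a low-degree correction chosen to give the extra $1/(k^2x)$ decay away from the origin.

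An alternative is to use $q$ built from $p$ equal to a truncated expansion of $1/(x+\delta)$ with $\delta=k^{-2}$. This gives the $\delta$-shift term $|x-(x+\delta)|\le k^{-2}$ directly. The remaining term needs $p$ to approximate $(x+\delta)^{-1}$ with relative accuracy that is only of order one near $x=0$ but of order $k^{-2}$ near $x=1$. The natural tool here is Chebyshev interpolation in the variable $t=\sqrt{x+\delta}$, whose Bernstein ellipse has parameter $1+\Theta(\sqrt\delta)$.

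\textbf{General $\alpha$.} For $\alpha\le1$ the same shift works, because $|x^\alpha-(x+\delta)^\alpha|\le\delta^\alpha=k^{-2\alpha}$. For $\alpha>1$ I would instead approximate $(x+\delta)^{-1}$ as above and raise the result to the power $\alpha$ when $\alpha$ is an integer, with error propagation $|a^\alpha-b^\alpha|\le\alpha\max(a,b)^{\alpha-1}|a-b|$. I would handle general $\alpha$ by combining the integer and fractional cases.

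\textbf{Main obstacle.} The hard step is getting the upper bound with no logarithmic loss. A uniform relative approximation of $(x+\delta)^{-\alpha}$ to accuracy $k^{-2\alpha}$ costs degree about $k\log k$, which is exactly the kind of $\log$ factor that appears in Theorem~\ref{t:perturbed-minres}. To avoid it, the error must be weighted: it only needs to be small relative to $(x+\delta)^{-\alpha}$ by a factor $(k^2x)^{-\alpha}$. My first attempt would be the explicit damped Chebyshev choice of $q$ above, checking the two inequalities on $q$ separately on $[0,1/k^2]$ and on $[1/k^2,1]$ using the standard bounds $\cosh$ and $\cos$ for $T_m$.
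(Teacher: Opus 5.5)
\paragraph{The lower bound (fine after a small repair).}
Your lower-bound argument is sound in outline, and it goes beyond the paper, which proves neither bound in general: it cites Levin and Saff and only constructs the $\alpha=1$ upper bound explicitly. One step needs repair. Nothing forces $\|p\|_{[0,1]}\lesssim 1/\eta$, because $1/p$ may be arbitrarily close to $0$ near the origin. Argue instead as follows, for $\eta<1/3$.
\begin{itemize}
\item $p>0$ on $[0,1]$ and $p(0)\ge 1/\eta$.
\item $p\le 1/(2\eta)$ on $[x_1,1]$, where $x_1=(3\eta)^{1/\alpha}$.
\item Hence $M=\max_{[0,1]}p\ge 1/\eta$ is attained at some $x_0<x_1$, and $p$ drops by at least $M/2$ on $[x_0,x_1]$.
\item Plain Markov, $\|p'\|_{[0,1]}\le 2k^2M$, then gives $x_1\ge 1/(4k^2)$, i.e.\ $\eta\ge\frac13(4k^2)^{-\alpha}$.
\end{itemize}
The Bernstein bound you display is the interior estimate, not an endpoint form of Markov, and it is not needed.

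\paragraph{The upper bound for $\alpha=1$ is never established.}
This is the genuine gap. Your reduction to $q=1-xp$ with $q(0)=1$, $|q(x)|\lesssim\min\{1,1/(k^2x)\}$ and $1-q(x)\gtrsim\min\{1,k^2x\}$ is correct. The damped Chebyshev ansatz cannot deliver such a $q$, however.
\begin{itemize}
\item The factor $T_m\bigl((1+\gamma)(1-2x)\bigr)/T_m(1+\gamma)$ has modulus exactly $1$ at $x=1$.
\item At each interior extremum its modulus is the fixed constant $1/T_m(1+\gamma)$. So it contributes no $1/(k^2x)$ decay.
\item The correction $r$ would therefore need $r(0)=1$ and $|r|\lesssim 1/(k^2x)$ at all those extrema, some of which lie at distance $\asymp 1/k^2$ from the origin.
\item By Markov's inequality such an $r$ has degree $\gtrsim k$. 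It is not a low-degree correction; it would have to solve the original problem itself.
\end{itemize}

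The missing idea is to get the decay for free by dividing by $x$. Take $q(x)=\frac{1-T_\ell(1-2x)}{2\ell^2x}$, the endpoint divided difference of $T_\ell$ normalized by $T_\ell'(1)=\ell^2$. With $x=\sin^2\gamma$ this equals $\bigl(\frac{\sin\ell\gamma}{\ell\sin\gamma}\bigr)^2$. Consequently:
\begin{itemize}
\item $q(0)=1$;
\item $0\le q\le\min\{1,1/(\ell^2x)\}$, since the numerator is at most $2$;
\item $1-q$ behaves like $\min\{1,\ell^2x\}$.
\end{itemize}
This is exactly the paper's construction, $q=G/x$ and $p=(x-G)/x^2$, written there with $T^*_\ell$ and $\ell$ even. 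The paper then evaluates $xq/(1-q)=F_\ell(\gamma)/\ell^2$ exactly and bounds it by $3/(\ell^2-1)$.

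Your interpolation alternative does not escape the issue. Interpolating $1/(x+\delta)$ at the zeros of any node polynomial $\omega$ gives exactly $1-(x+\delta)p(x)=\omega(x)/\omega(-\delta)$, which is the same design problem for $q$. Interpolating in $t=\sqrt{x+\delta}$ instead does not even produce a polynomial in $x$.

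\paragraph{The general-$\alpha$ step fails for $\alpha>1$.}
From a uniform approximant with $|x-1/p|\le\eta\asymp k^{-2}$, the power trick only yields $|x^\alpha-p^{-\alpha}|\le\alpha(x+\eta)^{\alpha-1}\eta$. Near $x=1$ this is of order $k^{-2}$, not $k^{-2\alpha}$; for example, $|x^2-p^{-2}|\le\eta(2x+\eta)$ when $\alpha=2$. To get $k^{-2\alpha}$ you would need a weighted approximant with $|x-1/p|\lesssim k^{-2\alpha}x^{1-\alpha}$ on $[k^{-2},1]$, and your plan does not produce one.

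For $\alpha<1$ the shift reduction is fine. The remaining weighted relative approximation of $(x+\delta)^{-\alpha}$, however, is precisely the step you yourself flag as the unsolved obstacle. The paper does not attempt general $\alpha$ either; it relies on Levin and Saff.
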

Levin and Saff do not provide explicit values for $B_\alpha$ and $C_\alpha$. To address this, we present a direct construction of $p(x)$ which shows that $C_{\alpha}\leq 3$ when $\alpha = 1$.

\medskip

First, for $x \in [0, 1]$ and any positive integer $m$, define the shifted Chebyshev polynomial $T_m^*(x)$ as
$T_m^*(x) = T_m(2x - 1)$, where $T_m(x) = \cos(m \arccos x)$.
For any $k\geq 2$, let $\ell$ be an even integer $\ell \in \{k, k+1\}$,  and define \begin{equation}\label{eq:G} G(x) := \frac{1 - T_\ell^*(x)}{2\ell^2} \quad \text{ for } x \in [0,1].
\end{equation}
The following key facts about the function $G(x)$ follow from trigonometric identities.
\begin{lemma}[Properties of shifted Chebyshev polynomials]\label{lem:cheb}
Let $G(x)$ be defined as in \eqref{eq:G}. Then, it holds that
    $$G(x) = \frac{\sin^2(\ell \gamma)}{\ell^2} \text{ for } \gamma \in [0, \pi/2] \text{ such that } x = \sin^2(\gamma).$$
This implies that (a)
$
0\le G(x)\le x$ for every \(x\in[0,1]\),
and (b) the function \(G(x)-x\) has a root of multiplicity at least \(2\) at \(x=0\).
\end{lemma}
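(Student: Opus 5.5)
The plan is to substitute $x=\sin^2\gamma$ directly into the definition \eqref{eq:G} and simplify with the half-angle identity; properties (a) and (b) then follow from elementary facts about $|\sin(\ell\gamma)|$ and from a Taylor expansion at $x=0$.

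\textbf{The closed form.} For $x\in[0,1]$ we may write $x=\sin^2\gamma$ with a unique $\gamma\in[0,\pi/2]$. Then
\begin{align*}
2x-1 = 2\sin^2\gamma-1 = -\cos(2\gamma) = \cos(\pi-2\gamma).
\end{align*}
Since $\pi-2\gamma\in[0,\pi]$, we have $\arccos(2x-1)=\pi-2\gamma$. Hence
\begin{align*}
T_\ell^*(x)=\cos(\ell\pi-2\ell\gamma)=(-1)^\ell\cos(2\ell\gamma)=\cos(2\ell\gamma),
\end{align*}
where the last equality is exactly where the evenness of $\ell$ is used. The identity $1-\cos(2\ell\gamma)=2\sin^2(\ell\gamma)$ then gives
\begin{align*}
G(x)=\frac{\sin^2(\ell\gamma)}{\ell^2}.
\end{align*}

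\textbf{Property (a).} Nonnegativity of $G$ is immediate from the closed form. For the upper bound, we need $\sin^2(\ell\gamma)\le \ell^2\sin^2\gamma$, that is, $|\sin(\ell\gamma)|\le \ell|\sin\gamma|$. This is the standard inequality proved by induction on $\ell$:
\begin{align*}
|\sin((m+1)\gamma)| = |\sin(m\gamma)\cos\gamma+\cos(m\gamma)\sin\gamma| \le |\sin(m\gamma)|+|\sin\gamma|.
\end{align*}
Dividing by $\ell^2$ yields $G(x)\le \sin^2\gamma = x$.

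\textbf{Property (b).} First, $G(0)=0$, since $\gamma=0$ there (equivalently, $T_\ell^*(0)=T_\ell(-1)=1$ because $\ell$ is even). So $G(x)-x$ vanishes at $0$. For the derivative, use the chain rule together with $T_\ell'(-1)=(-1)^{\ell-1}\ell^2=-\ell^2$ for even $\ell$:
\begin{align*}
G'(0) = -\frac{2\,T_\ell'(-1)}{2\ell^2} = 1.
\end{align*}
Thus $(G(x)-x)'$ also vanishes at $0$, and since $G(x)-x$ is a polynomial, the root at $x=0$ has multiplicity at least $2$.

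As a cross-check, one can instead use the closed form near $0$: $\sin^2(\ell\gamma)/\ell^2 = \gamma^2+O(\gamma^4)$ and $\sin^2\gamma=\gamma^2+O(\gamma^4)$. Since $x\sim\gamma^2$, this gives $G(x)-x=O(x^2)$.

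\textbf{Main obstacle.} The only delicate point is the sign bookkeeping in the closed form: the $(-1)^\ell$ factor and the value of $T_\ell'(-1)$. Both are resolved by the assumption that $\ell$ is even.
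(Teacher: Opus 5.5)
Your proof is correct and follows essentially the same route as the paper. Both use the substitution $x=\sin^2\gamma$ with $\arccos(2x-1)=\pi-2\gamma$ and the evenness of $\ell$, the inductive bound on $\sin(\ell\gamma)$, and the derivative value $T_\ell'(-1)=(-1)^{\ell-1}\ell^2$ at $x=0$. Your absolute-value form $|\sin(\ell\gamma)|\le \ell\sin\gamma$ is exactly the two-sided inequality needed to square the bound in (a), which the paper's one-sided Lemma~\ref{lemma:sin-eq-easy} leaves implicit.
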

Lemma~\ref{lem:cheb} (proven in Appendix~\ref{app:cheb}) implies that 
\begin{equation}\label{eq:pofx}
p(x) := \frac{x - G(x)}{x^2} 
\end{equation}
is a well-defined polynomial of degree at most $\ell - 2 \le k-1$ and 
for any $x \in [0,1]$
\begin{align}\label{eq:polyinv-approx}
\left|x - \frac{1}{p(x)}\right| &= \left| x - \frac{x^2}{x - G(x)} \right| = \frac{x G(x)}{x - G(x)} = \frac{1}{\ell^2} \cdot\frac{\ell^2\sin^2(\gamma) \sin^2(\ell\gamma)}{\ell^2\sin^2(\gamma) - \sin^2(\ell\gamma)},
\end{align}
where $x = \sin^2(\gamma)$ as in Lemma~\ref{lem:cheb}. We maximize this expression using basic calculus.

\begin{lemma}[Explicit upper bound]\label{lem:sharp-trig-bound}
For every integer $\ell\ge 2$, define
\[
F_\ell(\gamma)
:=
\frac{\ell^2\sin^2(\gamma)\,\sin^2(\ell\gamma)}
{\ell^2\sin^2(\gamma)-\sin^2(\ell\gamma)},
\qquad \gamma\in[0,\pi/2].
\]
Then $F_\ell$ is well defined on $[0,\pi/2]$ after continuous extension at $\gamma=0$, and
\[
\max_{\gamma\in[0,\pi/2]} F_\ell(\gamma)
=
\frac{3\ell^2}{\ell^2-1}. 
\]
\end{lemma}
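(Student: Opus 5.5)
The plan is to rewrite the claim as a lower bound on a difference of cosecants and then use a classical partial-fraction identity for $\csc^2$.

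\textbf{Reformulation.} Set $s=\ell^2\sin^2\gamma$ and $u=\sin^2(\ell\gamma)$. For $\gamma\in(0,\pi/2]$ we have the strict inequality $|\sin(\ell\gamma)|<\ell\sin\gamma$ (standard, by induction on $\ell$ via the addition formula), so $s>u\ge 0$ and $F_\ell$ is well defined and nonnegative there. When $u>0$ we can write $F_\ell=su/(s-u)=1/(u^{-1}-s^{-1})$. Hence the bound $F_\ell(\gamma)\le 3\ell^2/(\ell^2-1)$ is equivalent, after multiplying by $\ell^2$, to
\begin{align*}
g(\gamma):=\ell^2\csc^2(\ell\gamma)-\csc^2\gamma\ \ge\ \frac{\ell^2-1}{3}.
\end{align*}
When $u=0$ we simply have $F_\ell=0$. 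A Taylor expansion at $\gamma=0$ gives $s-u\approx \ell^2(\ell^2-1)\gamma^4/3$ and $su\approx\ell^4\gamma^4$. So $F_\ell(0^+)=3\ell^2/(\ell^2-1)$, which shows that the continuous extension exists and that the claimed maximum is attained at $\gamma=0$.

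\textbf{Key identity and the range $0<\gamma<\pi/\ell$.} I will use the classical identity
\begin{align*}
\ell^2\csc^2(\ell\gamma)=\sum_{j=0}^{\ell-1}\csc^2\big(\gamma+j\pi/\ell\big).
\end{align*}
It follows from $\sin(\ell\gamma)=2^{\ell-1}\prod_{j}\sin(\gamma+j\pi/\ell)$ by differentiating the logarithm twice. This identity gives
\begin{align*}
g(\gamma)=\sum_{j=1}^{\ell-1}\csc^2(\gamma+j\pi/\ell).
\end{align*}
For $|\gamma|<\pi/\ell$, every argument $\gamma+j\pi/\ell$ with $1\le j\le\ell-1$ lies in $(0,\pi)$, where $\csc^2$ is convex. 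Therefore $g$ is convex on $(-\pi/\ell,\pi/\ell)$. The reindexing $j\mapsto \ell-j$ together with $\csc^2(\pi-t)=\csc^2 t$ shows that $g$ is even. Consequently $g$ is minimized at $\gamma=0$, where it equals $\sum_{j=1}^{\ell-1}\csc^2(j\pi/\ell)=(\ell^2-1)/3$. This is the known cosecant sum, and it is also consistent with the limit computed above. This settles $0<\gamma<\pi/\ell$.

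\textbf{Remaining range $\pi/\ell\le\gamma\le\pi/2$.} Here I use crude bounds. We have $\ell^2\csc^2(\ell\gamma)\ge\ell^2$, and points with $\sin(\ell\gamma)=0$ give $F_\ell=0$, so they are harmless. Jordan's inequality gives $\sin\gamma\ge 2\gamma/\pi\ge 2/\ell$, hence $\csc^2\gamma\le\ell^2/4$. Therefore $g(\gamma)\ge 3\ell^2/4\ge(\ell^2-1)/3$.

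\textbf{Main obstacle.} The only nontrivial step is the middle one: recognizing the partial-fraction identity and the convexity-plus-symmetry argument that locates the minimum of $g$ at $0$. Verifying the cosecant sum value $(\ell^2-1)/3$ can be done either by citing it or, more simply, by reading it off from the $\gamma\to0$ limit of $g$, which the Taylor expansion already computes.
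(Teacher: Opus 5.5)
Your proof is correct, and it takes a genuinely different route from the paper's.

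\textbf{Shared starting point.} Both arguments begin with the same reformulation: the paper's $1/F_\ell=\csc^2(\ell\gamma)-\ell^{-2}\csc^2\gamma$ is exactly your $g/\ell^2$. After that the two proofs diverge.

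\textbf{The paper's route.} It proceeds by one-variable calculus in three steps.
\begin{enumerate}
\item It folds $(\pi/(2\ell),\pi/2]$ back onto $[0,\pi/(2\ell)]$. It replaces $\gamma$ by $v=\ell^{-1}\arcsin|\sin(\ell\gamma)|\le\gamma$ and uses that $x\mapsto xy/(x-y)$ is decreasing.
\item It shows $1/F_\ell$ is increasing on $(0,\pi/(2\ell)]$. The derivative is written as $\frac{2}{\ell^2u^3}\bigl(h(u)-h(\ell u)\bigr)$ with $h(x)=x^3\cos x/\sin^3x$, and $h$ is proved decreasing.
\item It reads off the value $3\ell^2/(\ell^2-1)$ from the Taylor limit at $0$.
\end{enumerate}

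\textbf{Your route.} You use the product formula for $\sin(\ell\gamma)$ to write $g(\gamma)=\sum_{j=1}^{\ell-1}\csc^2(\gamma+j\pi/\ell)$. This gives convexity and evenness on the wider interval $(-\pi/\ell,\pi/\ell)$ almost for free, with no derivative computation beyond convexity of $\csc^2$ on $(0,\pi)$. It also identifies the extremal value as the classical cosecant sum. You then dispose of $[\pi/\ell,\pi/2]$ with a crude Jordan-inequality estimate, $g\ge 3\ell^2/4$, which has plenty of room, instead of the folding trick.

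\textbf{Trade-off.} Your argument is shorter and more structural, and it yields convexity rather than mere monotonicity. The price is importing the product identity; the paper's argument is fully elementary and self-contained.

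\textbf{A small point in your favour.} You explicitly prove the strict inequality $|\sin(\ell\gamma)|<\ell\sin\gamma$ on $(0,\pi/2]$, which is what guarantees the denominator never vanishes. The paper's proof leaves this implicit, and its Lemma~\ref{lemma:sin-eq-easy} only supplies a non-strict bound.
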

The proof is in Appendix~\ref{app:cheb}. Since $\ell\geq k$, together with \eqref{eq:polyinv-approx} this immediately implies:
\begin{align*}
\max_{x \in [0,1]}\left|x - \frac{1}{p(x)}\right| \le \frac3{\ell^2-1}\leq \frac 3{k^2-1}.
\end{align*}
\end{proof}

\subsection{Implementation and Complexity} 
Here, we describe how to compute the MINBERR solution $\x_k$ efficiently (and approximately), and show that MINBERR has complexity bounded by $O(n^2/\sqrt{\epsilon})$ to achieve $\epsilon$ backward error. 

First we show that 
finding the minimizer of the backward error in a subspace is equivalent to solving a certain generalized eigenvalue problem. Indeed, when minimizing $\berr_{\A,\b}(\x)$ over $\Kc_k(\A,\b)$, we seek to find a solution for $\A\x=\b$ within a given $k$-dimensional subspace $\mbox{span}(\Q_k)$, parameterized by $\Q_k\in\mathbb{R}^{n\times k}$ having orthonormal columns. 
 If we write $\x=\Q_k\y$, then minimizing $\berr_{\A,\b}(\x)=\frac{\|\A\x-\b\|_2}{\|\A\|_2\|\x\|_2}$ over $\x\in\Kc_k(\A,\b)$ reduces to minimizing the following generalized Rayleigh quotient:
\begin{align}\label{eq:GRQ}
\argmin_{\y \in \R^k} \frac{\|\A\Q_k\y-\b\|_2}{\|\Q_k\y\|_2}
= \argmin_{\y \in \R^k}
\frac{
\begin{bmatrix}
-1 & \y^\top     
\end{bmatrix}
\left(
\begin{bmatrix}
\b& \A\Q_k
\end{bmatrix}^\top
\begin{bmatrix}
\b& \A\Q_k 
\end{bmatrix}\right)
\begin{bmatrix}
-1 \\\y     
\end{bmatrix}
}{\begin{bmatrix}
-1 & \y^\top     
\end{bmatrix}
\left(
\begin{bmatrix}
0 & \\ & \I_k
\end{bmatrix}^\top\begin{bmatrix}
0 & \\ & \I_k
\end{bmatrix}\right)
\begin{bmatrix}
-1 \\\y     
\end{bmatrix}
}.\nonumber %
\end{align}
The value of the minimum is squared in the second expression. 
Note that if $\b$ is in the span of $\A\Q_k$ then MINBERR finds the exact solution to $\A\x=\b$, for which trivially $\berr_{\A,\b}(\x)=0$. 
Otherwise (in the usual case), 
$\begin{bmatrix}\b& \A\Q_k \end{bmatrix}$ has full column rank and so 
$\begin{bmatrix}
\b& \A\Q_k 
\end{bmatrix}^\top
\begin{bmatrix}
\b& \A\Q_k 
\end{bmatrix}$ is positive definite. In the latter case, 
minimizing such quotient is equivalent~\cite[Sec. 8.7.4]{golubbook}  to finding the smallest eigenpair of the generalized eigenvalue problem 
\begin{equation}\label{eq:bAQ}    
\begin{bmatrix}
\b& \A\Q_k 
\end{bmatrix}^\top 
\begin{bmatrix}
\b& \A\Q_k 
\end{bmatrix}
\begin{bmatrix}
-1 \\\y     
\end{bmatrix}
=\lambda\begin{bmatrix}
0 & \\ & \I_k
\end{bmatrix}^\top 
\begin{bmatrix}
0 & \\ & \I_k
\end{bmatrix}
\begin{bmatrix}
-1 \\\y     
\end{bmatrix}, 
\end{equation}
which is 
\begin{equation}\label{eq:minberrGEP}
\begin{bmatrix}
\|\b\|_2^2& \b^\top \A\Q_k \\
(\A\Q_k)^\top  \b& (\A\Q_k)^\top \A\Q_k
\end{bmatrix}
\begin{bmatrix}
-1 \\\y 
\end{bmatrix}
=\lambda\begin{bmatrix}
0 & \\ & \I_k
\end{bmatrix}
\begin{bmatrix}
-1 \\\y
\end{bmatrix}. 
\end{equation}
This is a positive (semi)definite generalized eigenvalue problem, and has real eigenvalues, with one at infinity. The smallest eigenvalue $\lambda_{\min}$, which is nonnegative, is equal to the square of the backward error of the MINBERR solution, that is, $\lambda_{\min} =  (\berr_{\A,\b}(\x_k))^2$.  

The argument so far holds for an arbitrary subspace $\Q_k$. 
We note that a similar process was described by Kasenally and Simoncini~\cite{kasenally1995gmback,kasenally1997analysis}, who take $\Q_k$ to be the Krylov subspace for a non-symmetric $\A$ found by the Arnoldi decomposition~\cite[Ch.~6]{saad2003iterative}, and solve such  generalized eigenvalue problem, which is unstructured except for the symmetry. Below we exploit the tridiagonal structure in the Lanczos decomposition 
to obtain a much more efficient algorithm when $\A$ is symmetric. Later, in Section~\ref{s:algne} we extend this approach to non-symmetric $\A$ by working in the Krylov subspace corresponding to the normal equations, obtaining \algne. 

\subsubsection{Fast implementation by exploiting symmetry}

Let us assume that $\A$ is symmetric (PSD is not strictly required). 
In this case, Krylov subspace methods start with the Lanczos decomposition~\cite[Sec.~6.6]{saad2003iterative}: 
\begin{equation}\label{eq:Lanczos}
\A\Q_k = \Q_{k+1}\T_{k}    
\end{equation}
 where 
$\Q_k=[\q_1,\ldots,\q_k]$ is $n\times k$, 
$\Q_{k+1}=[\q_1,\ldots,\q_{k+1}]$ is $n\times (k+1)$, 
both having orthonormal columns, 
and 
$\T_{k}$ is $(k+1)\times k$ symmetric tridiagonal. 
The columns of $\Q_k$ represent the orthonormal basis of the Krylov subspace at iteration $k$.
By the standard Lanczos construction
$\b = \Q_{k+1}\e_{k+1}\|\b\|_2$  where 
$\e_{k+1} = [1,0,\ldots,0]^\top \in\R^{k+1}$, 
so that $[\b\  \A\Q_k]  = \Q_{k+1}[\e_{k+1}\|\b\|_2\ \T_k]$.
Thus the equation \eqref{eq:bAQ} simplifies 
to 
\begin{equation}    \label{eq:MM}
\M^\top \M
\begin{bmatrix}
-1 \\\y
\end{bmatrix}
=\lambda 
\begin{bmatrix}
0 & \\ & \I_k
\end{bmatrix}
\begin{bmatrix}
-1 \\\y     
\end{bmatrix}, \qquad
\text{where}\quad \M = 
\begin{bmatrix}
\|\b\|_2\e_{k+1}&  \T_k
\end{bmatrix}. 
\end{equation}
$\M$ is a $(k+1)\times (k+1)$ tridiagonal and upper triangular matrix. 
It thus suffices to find the smallest eigenpair of the generalized eigenvalue problem~\eqref{eq:MM}. The next result shows how to do so efficiently and approximately. 
\begin{lemma}\label{l:quotient}
    Let $(\sigma_{\min},\v_{\min})$ be the smallest singular value and the corresponding right singular vector of the $k\times k$ matrix $\tilde\T_k$ obtained by removing the first row from $\T_k$ in~\eqref{eq:Lanczos}. Then,
    the smallest eigenpair of the generalized eigenvalue problem~\eqref{eq:MM} is given by $(\sigma_{\min}^2,\frac1\alpha\v_{\min})$, where $\alpha =-\frac{1}{\|\b\|_2}\left((\T_k)_{1,1}(\v_{\min})_1 +(\T_k)_{1,2}(\v_{\min})_2\right)$. Consequently, the MINBERR solution $\x_k$ after $k$ iterations satisfies:
    \begin{align*}
        \x_k = \frac1\alpha\Q_k\v_{\min},\qquad \berr_{\A,\b}(\x_k) = \sigma_{\min}.
    \end{align*}
    Moreover, for any $\widehat\v\in\mathbb{R}^{k}$, 
    defining 
    $\widehat\alpha=-\frac{1}{\|\b\|_2}\left((\T_k)_{1,1}(\widehat\v)_1 +(\T_k)_{1,2}(\widehat\v)_2\right)$ and 
    $\widehat\x = \frac1\alpha\Q_k\widehat\v,$ 
    we have 
    $
\berr_{\A,\b}(\widehat\x) = \|\tilde \T_k\widehat\v\|_2/\|\widehat \v\|_2$.
\end{lemma}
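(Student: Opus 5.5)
The plan is to reduce the generalized Rayleigh quotient to an ordinary one by exploiting the structure of $\M$. Write $\z=[-1;\ \y]$ and split $\M$ by rows. The first row of $\M$ is $[\|\b\|_2,\ (\T_k)_{1,1},\ (\T_k)_{1,2},\ 0,\ldots,0]$. The remaining $k$ rows are $[\zero\ \ \tilde\T_k]$, since $\e_{k+1}$ contributes only to the first row.

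The quotient in \eqref{eq:MM} is then
\begin{align*}
\frac{\|\M\z\|_2^2}{\|\y\|_2^2}=\frac{\big(-\|\b\|_2+(\T_k)_{1,1}y_1+(\T_k)_{1,2}y_2\big)^2+\|\tilde\T_k\y\|_2^2}{\|\y\|_2^2}.
\end{align*}
The key step is the change of variables $\y=\v/\alpha$, where $\v$ ranges over all of $\R^k$ and the scalar $\alpha$ is chosen so that the first squared term vanishes. This requires $(\T_k)_{1,1}v_1+(\T_k)_{1,2}v_2=\alpha\|\b\|_2$, i.e.\ $\alpha=\frac{1}{\|\b\|_2}\big((\T_k)_{1,1}v_1+(\T_k)_{1,2}v_2\big)$; the statement's sign convention for $\alpha$ absorbs the $-1$ in $\z$, and since the quotient is invariant to flipping $\v$, the sign does not matter. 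With this choice the quotient becomes $\|\tilde\T_k\v\|_2^2/\|\v\|_2^2$, which is scale invariant.

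For any $\y$, the quotient is at least $\|\tilde\T_k\y\|_2^2/\|\y\|_2^2\ge\sigma_{\min}^2$. Choosing $\v=\v_{\min}$ attains $\sigma_{\min}^2$ exactly, provided $\alpha\neq0$. So the minimum of the generalized eigenproblem is $\sigma_{\min}^2$ with eigenvector $[-1;\ \v_{\min}/\alpha]$. Then $\x_k=\Q_k\y$ follows from the parametrization, and $\berr_{\A,\b}(\x_k)=\sigma_{\min}$ follows from the identity between $\lambda_{\min}$ and the squared backward error established before \eqref{eq:minberrGEP}. Here I take the normalization $\|\A\|_2=1$, as in the proof of Theorem~\ref{t:minberr}, or else track the factor $\|\A\|_2$.

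The final claim is the same computation for arbitrary $\widehat\v$. Since $\A\Q_k=\Q_{k+1}\T_k$ and $\Q_{k+1}$ has orthonormal columns, we have $\|\A\widehat\x-\b\|_2=\|\M\widehat\z\|_2$ and $\|\widehat\x\|_2=\|\widehat\v\|_2/|\widehat\alpha|$. The $\widehat\alpha$ scaling kills the first row, leaving $\|\tilde\T_k\widehat\v\|_2/\|\widehat\v\|_2$. Note that the statement writes $\frac1\alpha$ in $\widehat\x$; I will read this as $\frac1{\widehat\alpha}$.

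The main obstacle is the degenerate case $\alpha=0$, meaning the first row of $\T_k$ annihilates $\v_{\min}$. In that case the infimum $\sigma_{\min}^2$ is approached only as $\|\y\|_2\to\infty$, which corresponds to the infinite eigenvalue direction. I would handle it by noting that if $\alpha=0$ then $\T_k\v_{\min}$ has norm $\sigma_{\min}$ with a zero first entry. If $\sigma_{\min}=0$, this gives $\T_k\v_{\min}=0$ and $\A\Q_k\v_{\min}=0$, contradicting the full column rank assumed in the nondegenerate case. Otherwise I would assume genericity, as the paper does for the full-rank case.
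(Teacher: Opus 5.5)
Your core computation is correct and is the paper's argument in elementary form. The paper's congruence $\W$ satisfies $\M\W[0;\mathbf{v}]=[0;\tilde\T_k\mathbf{v}]$, and $\W[0;\mathbf{v}]$ is, up to scaling, your vector $[-1;\mathbf{v}/\alpha]$. So this is exactly your observation that the rescaling $\y=\mathbf{v}/\alpha$ kills the first row of $\M$. Where the paper then appeals to invariance of pencil eigenvalues under congruence, you use the explicit bound that the quotient is at least $\|\tilde\T_k\y\|_2^2/\|\y\|_2^2\ge\sigma_{\min}^2$, which is slightly more self-contained. You are also right that $\berr_{\A,\b}(\x_k)=\sigma_{\min}$ requires $\|\A\|_2=1$ (in general it equals $\sigma_{\min}/\|\A\|_2$), and that $\widehat\x$ should be scaled by $1/\widehat\alpha$.

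The step that fails is your claim that the sign convention for $\alpha$ does not matter. Because $\alpha$ is linear in $\mathbf{v}$, flipping $\mathbf{v}$ also flips $\alpha$ and leaves $\mathbf{v}/\alpha$ unchanged. So the sign of $\mathbf{v}_{\min}$ is indeed irrelevant, but the sign in the \emph{formula} for $\alpha$ is not. Flipping it replaces $\y$ by $-\y$, i.e.\ $\x$ by $-\x$, and then the first entry of $\M[-1;\y]$ is $-2\|\b\|_2$ rather than $0$.

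Concretely, take $\A=\mathrm{diag}(1,2)$, $\b=(1,1)^\top$, $k=1$, so that $(\T_1)_{1,1}=3/2$ and $\tilde\T_1=[1/2]$. The printed formula gives $\x_1=-\tfrac{2}{3}\b$, whose backward error is about $1.52$. The true minimizer is $+\tfrac{2}{3}\b$, with backward error $1/4=\sigma_{\min}/\|\A\|_2$. The ``Moreover'' identity fails in the same way.

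So the lemma as printed carries a sign error, and the paper's proof has the same slip: it asserts that $\W[0;\mathbf{v}_{\min}]$ is parallel to $[1;\y]$, whereas the eigenvector has the form $[-1;\y]$. Instead of arguing the sign away, say explicitly that what you prove holds with $\alpha=+\frac{1}{\|\b\|_2}\big((\T_k)_{1,1}(\mathbf{v}_{\min})_1+(\T_k)_{1,2}(\mathbf{v}_{\min})_2\big)$, and likewise for $\widehat\alpha$. Equivalently, $\x_k=-\frac{1}{\alpha}\Q_k\mathbf{v}_{\min}$ with the printed $\alpha$. With that correction your proof is complete.

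Your genericity detour for $\alpha=0$ is unnecessary. The statement divides by $\alpha$ and $\widehat\alpha$, so it already presupposes that they are nonzero.
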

\begin{proof}
Since the second matrix in \eqref{eq:MM} is singular, there is an eigenvalue of \eqref{eq:MM} at infinity $\lambda_\infty=\infty$. 
As $\lambda_\infty$ is irrelevant to the MINBERR solution, 
we deflate, or remove, $\lambda_\infty$ 
by employing 
a congruence operation with respect to a nonsingular $\W$
such that $\W^\top(\M^\top \M, \begin{bmatrix}
0 & \\ & \I_k
\end{bmatrix})\W$ is simultaneously block diagonal, 
as follows: 
We choose $\W$ so that left-multiplying to $\M$ adds multiples of the first column of $\M$ to other columns so as to eliminate the other two nonzero elements in the first row of $\M$. 
This is achieved by setting 
\begin{align*}\W = 
\begin{bmatrix}
\multicolumn{2}{c}{\w_1} \\     
0 & \I_{k}
\end{bmatrix}
\in\mathbb{R}^{(k+1)\times (k+1)},\text{ where }
\w_1=\frac{1}{\|\b\|_2}\big[1, -(\T_k)_{1,1}, -(\T_k)_{1,2},0,\ldots,0\big].
\end{align*} 
We then obtain 
\begin{equation}
\label{eq:pairs}    
\W^\top \left(
(\M^\top \M), 
\begin{bmatrix}
0 & \\ & \I_k
\end{bmatrix}\right)\W = 
\left(\begin{bmatrix}
1& 0\\
 & \tilde \T_k
\end{bmatrix}
^\top 
\begin{bmatrix}
1& 0\\
 & \tilde \T_k
\end{bmatrix}, 
\begin{bmatrix}
0 & \\ & \I_k
\end{bmatrix}\right). 
\end{equation}

Since the eigenvalues of a matrix pair are invariant under congruence (or more generally equivalence) transformations~\cite[Sec.~7.7]{golub2013matrix}, 
to find the MINBERR solution it suffices to examine the eigenvalues of \eqref{eq:pairs}, where the eigenvalue $\lambda_\infty$ is decoupled from the rest. 
Further, since the second matrix in \eqref{eq:pairs} has $\I_k$ in the second block, we see that the smallest eigenvalue of the 
generalized eigenvalue problem~\eqref{eq:MM}
is equal to that of 
$\tilde \T_k^\top \tilde \T_k$
(i.e., a standard symmetric eigenvalue problem), which in turn can be reduced to the SVD of $\tilde \T_k\in\mathbb{R}^{k\times k}$; more precisely, $\sigma_{\min}(\tilde \T_k) = \berr_{\A,\b}(\x_k)$, and 
the corresponding  right singular vector $\v_{\min}$ is such that 
$\W\begin{bmatrix}
    0\\ \v_{\min}
\end{bmatrix}$
is parallel to $\begin{bmatrix}
1\\ \y    
\end{bmatrix}$; this follows from the congruence relation between~\eqref{eq:MM} and~\eqref{eq:pairs}. 
Now since $\W 
\begin{bmatrix}
0 \\ \v_{\min}    
\end{bmatrix} = \begin{bmatrix}
\alpha \\ \v_{\min}    
\end{bmatrix}$ where $\alpha = \w_1\begin{bmatrix}
    0\\ \v_{\min}
\end{bmatrix}=-\frac{1}{\|\b\|_2}\left((\T_k)_{1,1}(\v_{\min})_1 +(\T_k)_{1,2}(\v_{\min})_2\right)
$, we can compute\footnote{\label{footnotealpha0}Unless $\alpha=0$; which happens precisely when $\b$ is orthogonal to the entire subspace $\A\Q_k$, as we see by examining~\eqref{eq:GRQ}. Note that when $\A \succ 0$ this is impossible, as $\b^\top \A\b>0$ and $\frac{\b}{\|\b\|_2}$ is the first column of $\Q_k$. 
When $\A$ is PSD, the same holds unless $\A\b=0$, that is, $\b$ lies in the null space of $\A$. 
} $\y = \frac{1}{\alpha}\v_{\min}$. 
We obtain the MINBERR solution as  $\x_k=\Q_k\y$. 

For the latter statement, first 
note from~\eqref{eq:GRQ} that 
for any $\widehat\y\in\mathbb{R}^k$, letting $\widehat\x = \Q_k\widehat\y$ we have 
$\left\|\M\begin{bmatrix}
-1 \\ \widehat\y    
\end{bmatrix}\right\|_2=\berr_{\A,\b}(\widehat\x)$. 
Note also from~\eqref{eq:pairs} that $
\begin{bmatrix}
0\\ \tilde \T_k\widehat\v    
\end{bmatrix}
 =  \M\W \begin{bmatrix}
0\\ \widehat\v    
\end{bmatrix}$. Since 
$\widehat\alpha=-\frac{1}{\|\b\|_2}\left((\T_k)_{1,1}(\widehat\v)_1 +(\T_k)_{1,2}(\widehat\v)_2\right)$ is such that the first element of 
$\frac{1}{\widehat\alpha}\W\begin{bmatrix}
0\\ \widehat\v\end{bmatrix}$ is $-1$, that is, 
$\frac{1}{\widehat\alpha}\W\begin{bmatrix}
0\\ \widehat\v    
\end{bmatrix}=\begin{bmatrix}
-1 \\ \frac{1}{\widehat\alpha}\v
\end{bmatrix}$,  we obtain the desired result by taking $\widehat\y=\frac{1}{\widehat\alpha}\v$. 
\end{proof}

This shows that the MINBERR solution can be found via the SVD of $\tilde\T_k$, and that
one can find an approximate minimizer of $\berr_{\A,\b}(\x)$ over $\Q_k$ by finding a unit-norm vector $\v$ that approximately minimizes $\|\tilde\T_k\v\|_2/\|\v\|_2$. 
Let us consider a common situation where a prescribed tolerance $\epsilon$ is required. 
The MINBERR computation therefore proceeds as follows: First, run the Krylov iterations until $\sigma_{\min}(\tilde \T_k)<\epsilon$. Then 
find the corresponding singular vector $\v_{\min}$. 

In the first phase, given a required tolerance $\epsilon$ for the backward error, our algorithm first tests if 
$\sigma_{\min}(\tilde \T_k)$ is smaller than $\epsilon$ by attempting a Cholesky factorization of the symmetric pentadiagonal matrix
$\tilde\T_k^\top \tilde \T_k-\epsilon^2\I_k=\bf{R}^\top\bf{R}$. Cholesky succeeds if and only if $\sigma_{\min}(\tilde \T_k)>\epsilon$ holds~\cite[Ch.~10]{Higham:2002:ASNA}\footnote{The process is backward stable~\cite{Higham:2002:ASNA}; 
nonetheless, breakdown of Cholesky can occur if $\tilde G-\epsilon^2 \I_k$ is numerically rank deficient; which can happen even when $\sigma_{\min}(\tilde\T)>\epsilon$ if $\epsilon^2=O(u)$, where $u$ is the unit roundoff. This can only happen when we require backward error smaller than $\sqrt{u}$. In this case, one is advised to go to step 7 and compute the backward error by inverse iteration.}. 
Note that, at each iteration $k$, $\tilde \T_k$ grows in size by $1$, and only its final column needs computing (more specifically the bottom two elements). Similarly, the Gram matrix $\tilde \G=\tilde\T_k^\top \tilde \T_k$ and the Cholesky factor $\bf{R}$ both need to be computed only in the final columns, resulting in $O(1)$ cost overall for the convergence check. 

If Cholesky breaks down, implying $\sigma_{\min}(\tilde \T_k)<\epsilon$, 
 we terminate the Krylov iterations and proceed to compute $\v_{\min}$.
 This can be done by running the inverse iteration, 
i.e., 
 set $\v$ to be a random initial guess, and repeat: 
$\v:=(\tilde \T_k^\top \tilde \T_k)^{-1}\v$, and $\v:=\v/\|\v\|_2$.  
We then have $\v\rightarrow \v_{\min}$ as we run more iterations. 
Each application of $(\tilde \T_k^\top \tilde \T_k)^{-1}$ can be done by solving two linear systems with respect to $\tilde \T_k^\top$ and $\tilde \T_k$. These are tridiagonal and triangular, so each linear solve is $O(k)$ operations.  In practice, we observe that the inverse iteration typically converges to high accuracy in one or two steps for large $k$, which is unsurprising, since it is applied to a nearly-singular matrix with smallest singular value $<\epsilon$. 

We summarize the implementation in Algorithm~\ref{alg:MINBERR}.

\begin{algorithm}[hbtp]
  \caption{MINBERR: Solver for symmetric positive semidefinite linear systems}
  \label{alg:MINBERR}
  \begin{algorithmic}[1]
  \Require{$\A$, $\b$, iterations $k_{\max}$, backward error tolerance $\epsilon$, 
  failure probability $\delta$ 
  }
  \Ensure{
$\widehat\x_k$ such that either $k=k_{\max}$ or 
 $\berr_{\A,\b}(\x_k)< \epsilon$ }
 \State Set $\q_1 = \b/\|\b\|_2, \q_0 = 0, \beta_1 = 1, \ell = 0$
 \For{$k = 1,2,...,k_{\max}$}
    \State 
    (Lanczos:) $\hat\w_k = \A\q_k - \beta_k\q_{k-1}$, 
    $\alpha_k=\hat\w_k^\top \q_k$,
    $\w_k = \hat\w_k-\alpha_k\q_k$, 
    $\beta_{k+1}=\|\w_k\|_2,$ $\q_{k+1}=\w_k/\beta_{k+1}$.
    This executes the Lanczos process~\cite[Sec.~6.6]{saad2003iterative}, giving 
    $\A\Q_k=\Q_{k+1}\T$, where 
    $\T\in\mathbb{R}^{(k+1)\times k}$ is symmetric tridiagonal with $\T_{i,i}=\alpha_i, \T_{i,i+1}=\beta_{i+1}$. 
    \State Update 
    $\tilde\T := \footnotesize\begin{bmatrix} 
 \beta_2& \alpha_2 & \beta_3       \\
 & \beta_3 &\alpha_3 & \beta_4       \\
 && \ddots &\ddots & \ddots       \\
 &&& \ddots &\ddots & \beta_k     \\
 &&& & \beta_{k} & \alpha_{k}       \\
 &&& & & \beta_{k+1}       \\
    \end{bmatrix}$\normalsize by appending the last column.
Also update $\tilde \G=\tilde\T^\top \tilde\T$ by appending the last row and column.
\State (Check for $\epsilon$ convergence:) Attempt Cholesky factorization of $\tilde \G-\epsilon^2 \I_k = \bf{R}^\top \bf{R}$ (by computing the three nonzero entries in $\bf{R}$'s last column); if this breaks down (i.e., $\sigma_{\min}(\tilde\T)=\berr_{\A,\b}(\x_k)<\epsilon$), proceed to step 7. 
Otherwise return to step 2.
\EndFor         
\State  Find smallest singular vector of $\tilde\T$ by inverse iteration: Let $\v$ be Gaussian, and
\While{
$\ell<2.23 \ln(k / \delta^2)$}
 \State Solve $\tilde\T^\top\z = \v$ for $\z $, and $\tilde\T \v = \z$ for $\v$.
 \State Set $\v:=\v/\|\v\|_2$, \quad $\ell: = \ell+1$. 
 \EndWhile
\State $\widehat\x_k = \frac{1}{\alpha}\Q \v$ where 
$\alpha = -\frac{1}{\|\b\|_2}\left((\T_k)_{1,1}(\v)_1 +(\T_k)_{1,2}(\v)_2\right)$.
  \end{algorithmic}
\end{algorithm}
 
\vspace{-3mm}
\subsubsection{Time Complexity Analysis}

Next, we put together the above analysis to obtain the overall computational cost of Algorithm \ref{alg:MINBERR}, and show that it does in fact approximately minimize backward error in the Krylov subspace.

\begin{theorem}\label{thm:minberrcost}
Let $\A\succ 0$ be $n\times n$ and $\b\in\mathbb{R}^n$. 
Running $k$ steps of MINBERR (Algorithm~\ref{alg:MINBERR}) 
yields $\widehat\x_k$ such that $\berr_{\A,\b}(\widehat\x_k)\leq 
(1+\frac{1}{2})\min_{\x\in \Kc_{k}(\A,\b)}\berr_{\A,\b}(\x)$
with probability at least $1-\delta$, and 
takes $O\left(k(\mathcal{T}_\A+n+\log(\frac{1}{\delta})) \right)$ operations.
\end{theorem}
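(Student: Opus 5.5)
The plan splits the cost accounting from the accuracy guarantee. For the cost, each Lanczos step costs one matrix-vector product plus $O(n)$ vector operations, for $O(\Tc_\A+n)$ per iteration. Updating $\tilde\T$, $\tilde\G$ and the last column of the Cholesky factor $\mathbf R$ of the banded matrix $\tilde\G-\epsilon^2\I_k$ touches $O(1)$ entries, so it costs $O(1)$. After the loop, inverse iteration runs $O(\log(k/\delta^2))$ steps. Each step solves one upper triangular bidiagonal-plus-one system with $\tilde\T$ and one with $\tilde\T^\top$, at $O(k)$ each; $\tilde\T$ is invertible since $\A\succ0$ forces $\beta_{j}>0$ before breakdown. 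Forming $\widehat\x_k=\frac1{\widehat\alpha}\Q_k\v$ costs $O(nk)$. The inverse-iteration cost $O(k\log(k/\delta))$ is $O(k(n+\log(1/\delta)))$ because $\log k\le \log n$ when $k\le n$. This gives the total $O(k(\Tc_\A+n+\log\frac1\delta))$.

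For accuracy, I will use the last part of Lemma~\ref{l:quotient}: for the output vector $\v$, $\berr_{\A,\b}(\widehat\x_k)=\|\tilde\T_k\v\|_2/\|\v\|_2$. Since $\min_{\x\in\Kc_k}\berr_{\A,\b}(\x)=\sigma_{\min}(\tilde\T_k)=:\sigma_k$, it suffices to show that, with probability $\ge1-\delta$, the Rayleigh quotient satisfies $\v^\top\tilde\T_k^\top\tilde\T_k\v\le \frac94\sigma_k^2$. Equivalently, $\|\tilde\T_k\v\|\le\frac32\sigma_k$.

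Write the SVD $\tilde\T_k=\sum_i\sigma_i\u_i\v_i^\top$ with $\sigma_1\le\sigma_2\le\dots\le\sigma_k$; here $\sigma_1=\sigma_k$ in the notation above denotes the minimum. After $\ell$ steps starting from Gaussian $\g$, we have $\v\propto\sum_i\sigma_i^{-2\ell}c_i\v_i$ with $c_i=\v_i^\top\g$. I will avoid any gap assumption by using the standard gap-free argument. Split the indices into $S=\{i:\sigma_i^2\le 2\sigma_1^2\}$ and its complement. The Rayleigh quotient is at most
\[
2\sigma_1^2+\frac{\sum_{i\notin S}\sigma_i^{2}\sigma_i^{-4\ell}c_i^2}{\sigma_1^{-4\ell}c_1^2}.
\]
For $i\notin S$, $\sigma_i^{2-4\ell}\le \sigma_1^2\,2^{1-2\ell}\sigma_1^{-4\ell}$, using $\sigma_i^2>2\sigma_1^2$ and $\ell\ge1$. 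So the second term is at most $\sigma_1^2\,2^{1-2\ell}\|\g\|^2/c_1^2$.

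The target constant is $\frac94$, which exceeds $2$, so I need $2^{1-2\ell}\|\g\|^2/c_1^2\le\frac14$. The probabilistic input has two parts. First, $c_1\sim N(0,1)$ gives $|c_1|\ge \delta$ with probability at least $1-\delta$, up to the constant $\sqrt{2/\pi}$. Second, $\|\g\|^2\le O(k\log(1/\delta))$ with high probability, or I can bound the ratio $\|\g\|^2/c_1^2$ directly. I then need $4^{\ell}\gtrsim k/\delta^2$, i.e.\ $\ell\approx \frac{1}{\ln 4}\ln(k/\delta^2)\cdot$const, which is consistent with the threshold $2.23\ln(k/\delta^2)$. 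Splitting the failure probability between the two events gives the $1-\delta$ guarantee.

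I expect the main obstacle to be this last step: tuning the split threshold and the tail bounds so that the explicit constant $2.23$ and the factor $\frac32$ come out together. If the threshold $2\sigma_1^2$ is too loose, I will instead use $S=\{\sigma_i^2\le(1+\theta)\sigma_1^2\}$ with $\theta$ slightly above $1$ and retune.
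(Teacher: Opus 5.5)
Your proposal is correct and follows the paper's proof. It uses the same per-step cost accounting, and for the accuracy claim the same gap-free, Kuczy\'nski--Wo\'zniakowski-style analysis of inverse iteration on $\tilde\T_k^\top\tilde\T_k$: split the spectrum at a multiple of $\sigma_{\min}^2$, bound the tail by $\|\mathbf g\|_2^2/c_1^2$ times a geometric factor, and control $c_1$ by Gaussian anti-concentration and $\|\mathbf g\|_2^2$ by a $\chi^2$ tail. The paper packages this as Lemma~\ref{lemma:invit} and combines it with the last part of Lemma~\ref{l:quotient}.

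The only difference is the threshold. You split at $2\sigma_{\min}^2$ and aim for exactly $\frac94$ on the Rayleigh quotient, i.e.\ the stated factor $\frac32$ on the backward error. The paper instead invokes its lemma with $\Delta=\frac12$, so it really proves the stronger factor $\sqrt{3/2}$ at the price of more iterations. Your choice therefore leaves more slack for the count $2.23\ln(k/\delta^2)$, which the paper obtains from Lemma~\ref{lemma:invit} only after dropping lower-order additive terms.
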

\begin{proof}
We look at the costs for each step of Algorithm~\ref{alg:MINBERR}. 
Step 3: one clearly needs $k\Tc_{\A}$ time for the $k$ matrix-vector multiplications with $\A$. The $O(k)$ vector inner-products required to form the Lanczos vectors amount to $O(nk)$ operations. 
Step 4 is $O(1)$ cost, as there are only three entries to be added to $\tilde\T$ and $\tilde \G$. 
The same holds for step 5, as the only quantity that needs computing is the final column of $\bf{R}$, which has three nonzeros. 

We now examine the cost of approximately computing  $\v_{\min}$ using inverse iteration, steps 7--11. 
To do so, we need to combine Lemma \ref{l:quotient} with a result on the number of inverse iterations required to obtain a 
\emph{gap-independent} constant-factor relative bound on the Rayleigh quotient for the smallest eigenvalue of a positive definite matrix  (here, $\tilde \T^\top\tilde \T$). This follows analogously to the classical analysis of Kuczyński and Woźniakowski~\cite{kuczynski1992estimating} for standard power iteration (see Appendix~\ref{a:interse-iteration}).
\begin{lemma}\label{lemma:invit}
Let $\M$ be $n\times n$ positive definite with smallest eigenvalue~$\lambda_{\min}$,
and let ${\bf v}_0$ consist of $n$ Gaussian entries. Define
${\bf v}_{\ell} = (\M^{-1})^{\ell} {\bf v}_0$. If 
$\ell\geq \frac{1}{2} \Big[ 1 + \frac{\ln\left( \frac{32n \ln(2/\delta)}{\pi \delta^2 \Delta} \right)}{\ln(1 + \Delta/2)} \Big]$ for $\Delta\in(0,1)$,
then with probability $1-\delta$ 
we have  
${\bf v}_{\ell}^\top \M {\bf v}_{\ell}/\|{\bf v}_{\ell}\|_2^2\leq (1+\Delta)\lambda_{\min}$.
\end{lemma}
Applying Lemma \ref{lemma:invit} to $\tilde \T^\top\tilde \T$ shows that $\ell \geq 2.23 \ln(k / \delta^2)$
 steps of inverse iteration suffice to get $\Delta=1/2$ relative accuracy with probability at least $1-\delta$. 

Finally, computing $\x_k=\frac{1}{\alpha} \Q_k\v$ costs $O(nk)$ operations. 
Thus the total operation count is $O\left(k(\mathcal{T}_\A+n+\log(\frac{1}{\delta})+\log k )\right) = 
O\left(k(\mathcal{T}_\A+n+\log(\frac{1}{\delta}) )\right)$, as  $k\leq n$. 
\end{proof}

Let us discuss the case when $\A$ is PSD but singular, as the convergence theory in Theorem~\ref{t:minberr} allows it. 
In this case $\tilde \T_k$ is full rank until $k$ is equal to the number of nonzero distinct components $\tilde n$ in the eigenvector expansion $\b=\sum_{i=0}^n c_i\v_i$, and the proof remains unaffected. Once $k=\tilde n$, $\Q_k$ contains a null vector of $\A$, 
which can be scaled arbitrarily large to yield an arbitrarily small backward error. 
We note that inverse iteration would then break down in exact arithmetic, but in finite precision converges with excellent accuracy~\cite{peters1979inverse}, often in one step. Alternatively, one can run MINBERR on $\A+\epsilon\|\A\|_2\I\succ \zero$ and rely on Lemma~\ref{l:composition} to bound the backward error.

By combining the $O(1/k^2)$ convergence guarantee from Theorem \ref{t:minberr} with the complexity analysis in Theorem \ref{thm:minberrcost}, we immediately obtain the following.

\begin{corollary}\label{c:minberr}
Let $\A\succeq \zero$ be $n\times n$ and $\b\in\mathbb{R}^n$. 
MINBERR finds a solution with $\epsilon$ backward error in 
$O(\frac{1}{\sqrt{\epsilon}}
\left(\mathcal{T}_\A+n+\log(\frac{1}{\delta})\right)$ time with probability at least $1-\delta$. 
\end{corollary}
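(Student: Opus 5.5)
The plan is to combine Theorem~\ref{t:minberr} with Theorem~\ref{thm:minberrcost}, choosing the iteration count $k$ so that the exact Krylov minimizer has backward error comfortably below $\epsilon$. This leaves room for the factor-$\frac32$ loss from inverse iteration. First reduce to the positive definite case, since Theorem~\ref{thm:minberrcost} is stated for $\A\succ\zero$.

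\textbf{Choice of $k$.} By Theorem~\ref{t:minberr}, $\min_{\x\in\Kc_k(\A,\b)}\berr_{\A,\b}(\x)\le 3/(k^2-1)$ for every $k\ge2$. I will take $k=\lceil\sqrt{9/\epsilon+1}\,\rceil=O(1/\sqrt\epsilon)$. Then $3/(k^2-1)\le\epsilon/3$, and multiplying by the factor $1+\frac12$ from Theorem~\ref{thm:minberrcost} gives $\berr_{\A,\b}(\widehat\x_k)\le\epsilon/2<\epsilon$ with probability at least $1-\delta$.

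\textbf{Stopping rule.} Algorithm~\ref{alg:MINBERR} with tolerance $\epsilon$ and $k_{\max}$ set to this value stops no later than iteration $k$. If the Cholesky test detects $\sigma_{\min}(\tilde\T_{k'})<\epsilon$ at some earlier $k'\le k$, it proceeds to inverse iteration there. In that case, however, Theorem~\ref{thm:minberrcost} only guarantees $\frac32\epsilon$. To handle this I would run the algorithm with tolerance $\epsilon/2$ instead; this changes only constants, since then $\berr\le\frac32\cdot\frac\epsilon2<\epsilon$. Either way the iteration count is at most $k=O(1/\sqrt\epsilon)$.

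\textbf{Cost.} Theorem~\ref{thm:minberrcost} gives $O\big(k(\Tc_\A+n+\log(1/\delta))\big)=O\big(\frac1{\sqrt\epsilon}(\Tc_\A+n+\log\frac1\delta)\big)$, which is the claimed bound. I am assuming $k\le n$, as in that proof. If $k>n$, the Krylov space is exhausted by step $n$, so the same bound holds with $k$ replaced by $n$.

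\textbf{Main obstacle: singular $\A$.} The corollary allows $\A\succeq\zero$, but Theorem~\ref{thm:minberrcost} assumes $\A\succ\zero$. I would apply the regularization noted right before the corollary: run MINBERR on $\tilde\A=\A+\frac\epsilon4\|\A\|_2\I\succ\zero$, with target backward error $\epsilon/4$ for $(\tilde\A,\b)$. Lemma~\ref{l:composition} with perturbation level $\epsilon/4$ then gives
\[
\berr_{\A,\b}(\widehat\x)\le\Big(1+\frac\epsilon4\Big)\frac\epsilon4+\frac\epsilon4<\epsilon .
\]
Each matrix-vector product with $\tilde\A$ costs $\Tc_\A+O(n)$. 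We also need $\|\A\|_2$; I would use a constant-factor estimate from power iteration (or the Lanczos values already computed), which costs $O((\Tc_\A+n)\log(n/\delta))$ by the Kuczy\'nski--Wo\'zniakowski analysis. I expect this step to need the most care, because a $\log n$ term must not leak into the final complexity. To avoid it, I would note that any upper bound $\mu\ge\|\A\|_2$ within a constant factor suffices, adjusting constants in Lemma~\ref{l:composition} accordingly. Alternatively, I would rely on the paper's remark that the singular case works directly via the null-vector argument. Finally, the vector $\b$ of the footnote case ($\A\b=\zero$) is trivial: it is a null direction, and large multiples of it give arbitrarily small backward error at $O(\Tc_\A+n)$ cost.
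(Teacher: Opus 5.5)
Your argument is correct and follows the paper's route. The paper obtains the corollary immediately by combining Theorem~\ref{t:minberr} with Theorem~\ref{thm:minberrcost}, and for singular $\A$ it suggests exactly your regularization plus Lemma~\ref{l:composition}; your explicit $k=\lceil\sqrt{9/\epsilon+1}\,\rceil$ and tolerance $\epsilon/2$, which absorb the factor $\frac32$, make precise what the paper leaves implicit. On the step you flagged, note that Lemma~\ref{l:composition} only needs the shift to satisfy $0<s\le\frac\epsilon4\|\A\|_2$, so any positive \emph{lower} bound $\mu\le\|\A\|_2$ works, e.g.\ $\mu=\b^\top\A\b/\|\b\|_2^2$ (positive once $\A\b\neq\zero$) for one matrix-vector product, whereas certifying a constant-factor \emph{upper} bound is exactly what would bring in the $\log(n/\delta)$ power-iteration cost.
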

\begin{remark}
    Under the very mild assumption that $\mathcal{T}_\A\geq n$, this implies that we can find an $\epsilon$ backward error solution with probability $1- 2^{-Cn}$   in $O(\mathcal{T}_{\A}/\sqrt\epsilon)$ time.
\end{remark}
We note that other natural variants of MINBERR may be considered:
\begin{itemize}
 \item If we fix the number of iterations upfront (e.g., to be $\lceil\sqrt{2/\epsilon}\rceil$, which guarantees that backward error is bounded by $\epsilon$), then we can bypass steps 4--5. 
\item In our experiments, to track the convergence we ran steps 7--12 at every iteration, which incurs an additional cost of $O(k^2\log(1/\delta))$.
\end{itemize}

\section{\algne: Extension to Non-Symmetric Systems}
\label{s:algne}
There are two approaches one may consider for extending MINBERR to non-symmetric systems: GMRES-style (standard Krylov), and LSQR-style (Krylov for the normal equations). 

With the standard Krylov subspace
$\mbox{span}(\b,\A \b,\A^2 \b,...,\A^{k-1}\b)$, a well-known example, also discussed by Kasenally~\cite{kasenally1995gmback},
\begin{align*}\A=\begin{bmatrix}
& \I_{n-1}    \\
1& 
\end{bmatrix}
\quad\text{ and }\quad\b=[0,0,\ldots,0,1]^\top
\end{align*}
gives 
a subspace 
orthogonal to the solution $\x=[1,0,\ldots,0,0]^\top$ until $k=n$, and hence backward error $\geq 1$.
This example shows that a universal convergence rate is not possible with the standard Krylov subspace. 
More generally, the convergence of GMRES is still not fully understood~\cite{greenbaumbook,greenbaum1996any}, and not fully characterized by the eigenvalues or singular values.
For these reasons here we opt for the Krylov subspace of the normal equations, 
$\Kc_k(\A^\top\A,\A^\top\b)=\mbox{span}(\A^\top \b,(\A^\top \A)\A^\top \b,\ldots, (\A^\top \A)^{k-1}\A^\top \b)$, leading to the following variant of MINBERR over normal equations (\algne):
\begin{align}
  \x_{k} \ :=   \!\!\argmin_{\x\in \Kc_{k}(\A^\top\A,\A^\top\b)}\!\berr_{\A,\b}(\x).\label{eq:minberrne}
 \end{align}       

\subsection{Convergence Analysis} 
Here, we show that the sequence \eqref{eq:minberrne} nearly achieves an $O(1/k)$ universal backward error rate, up to a mild logarithmic factor.
\begin{theorem}[\algne]\label{t:minberrne} For invertible $\A\in\R^{n\times n}$, $\b\in\R^n$, and $k\geq 2$, the sequence \eqref{eq:minberrne} satisfies:
\begin{align*}
    \berr_{\A,\b}(\x_k) \leq \frac{3 \log \kappa(\A)}{k}.
\end{align*}
\end{theorem}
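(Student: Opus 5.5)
The plan is to reduce the statement to a scalar rational-approximation problem, in the same way as the proof of Theorem~\ref{t:minberr}, but in the singular values of $\A$.

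\textbf{Reduction.} By scaling, assume $\|\A\|_2=1$, so the singular values satisfy $\sigma_i\in[1/\kappa,1]$ with $\kappa=\kappa(\A)$. Write $\A=\U\Sigmab\V^\top$ and $\b=\U\mathbf c$.

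Every $\x\in\Kc_k(\A^\top\A,\A^\top\b)$ has the form $\x=p(\A^\top\A)\A^\top\b=\V\,\Sigmab\, p(\Sigmab^2)\mathbf c$ with $p\in\Pc_{k-1}$. Its residual is $\A\x-\b=\U\,(\Sigmab^2p(\Sigmab^2)-\I)\mathbf c$.

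By Lemma~\ref{l:berr-formula} and the argument already used for Theorem~\ref{t:minberr} (ratio of norms bounded by the norm of the commuting diagonal quotient), we get
\[
\berr_{\A,\b}(\x_k)\le\min_{p\in\Pc_{k-1}}\max_{\sigma\in[1/\kappa,1]}\Big|\frac{\sigma^2p(\sigma^2)-1}{\sigma p(\sigma^2)}\Big|.
\]
Substitute $t=\sigma^2\in[\kappa^{-2},1]$ and $h(t)=t\,p(t)=1-r(t)$, where $r$ has degree at most $k$ and $r(0)=1$. The target then becomes
\[
\max_{t\in[\kappa^{-2},1]}\frac{\sqrt t\,|r(t)|}{1-r(t)}\le\frac{3\log\kappa}{k},
\]
with the requirement $r<1$ on the interval.

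\textbf{Why a naive $r$ fails.} The Richardson choice $r(t)=(1-t)^k$ only gives $\kappa/k$, as Theorem~\ref{t:failure} shows. The loss comes from small $t$: there $1-r\approx kt$ while $|r|\approx1$. Equally, importing the $3/k^2$ bound of Theorem~\ref{t:minberr} in the variable $t$ and dividing by $\sqrt t$ loses a factor $\kappa$.

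\textbf{Multiscale construction.} To get only $\log\kappa$, I would split $[\kappa^{-2},1]$ into $J\approx\log_4\kappa^2=\log_2\kappa$ dyadic-in-$\sqrt t$ scales $I_j=[4^{-j},4^{-j+1}]$. I would then build $r=\prod_{j=1}^{J}s_j$, where each factor $s_j$ satisfies:
\begin{itemize}
\item $s_j$ has degree $m\approx k/J$;
\item $s_j(0)=1$ and $0\le s_j\le 1$ on $[0,1]$;
\item $s_j$ is a rescaled version of the Chebyshev filter from \eqref{eq:G}, namely $s_j(t)=T^*_m$-based with $1-s_j(t)\ge c\min(1,m^2t\,4^{j})$ on $[0,1]$.
\end{itemize}
A natural candidate is $s_j(t)=1-\ell^2G(\min)$-type expressions, or $\bigl(1+T^*_m(1-t4^{j}/\ldots)\bigr)/2$ suitably clipped. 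With these properties, the product obeys $0\le r\le\min_j s_j$ and $1-r\ge\max_j(1-s_j)$. For $t\in I_j$, the factor $s_j$ contributes $1-r\gtrsim\min(1,m^2t4^j)$, and the factors $s_{j'}$ with $j'<j$ already make $r$ small.

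For such $t$, the plan is to bound $\sqrt t\,r/(1-r)$ by roughly $\sqrt t\cdot\frac{1}{m^2 t\,4^{j}}\cdot(\text{decay})$, and hence by $O(1/m)$. Here I would use the $\sin^2$ parametrization from Lemma~\ref{lem:cheb} together with the same calculus as in Lemma~\ref{lem:sharp-trig-bound}, in the form $\sqrt{x}\,\sin^2(m\gamma)/(m^2\sin^2\gamma-\sin^2(m\gamma))\lesssim 1/m$. This gives $O(J/k)=O(\log\kappa/k)$.

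\textbf{Main obstacle.} The hardest step is designing the factors $s_j$ so that all three requirements hold at once:
\begin{itemize}
\item they stay in $[0,1]$ on all of $[0,1]$, so the product never amplifies;
\item each one is small enough on the scales $t\gtrsim4^{-j}$;
\item the bound at a given $t$ uses only one factor's $1-s_j$, so that errors do not accumulate across scales.
\end{itemize}
Only after the construction works would I track constants to reach exactly $3\log\kappa/k$, presumably reusing the constant $3$ from Lemma~\ref{lem:sharp-trig-bound}.

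If the product approach turns out too lossy, the fallback is to choose the single Chebyshev-type polynomial minimizing $\max_t\sqrt t\,|r|/(1-r)$ in the variable $\sigma$. That is, I would apply the construction of Theorem~\ref{t:minberr} to the odd polynomial $q(\sigma)=\sigma p(\sigma^2)$ approximating $1/\sigma$ in a relative sense on $[1/\kappa,1]$, and invoke Lemma~\ref{l:levin1988} with $\alpha=1/2$ as a guide for the $1/k$ rate.
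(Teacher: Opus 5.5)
Your reduction to the scalar problem $\max_{t\in[\kappa^{-2},1]}\sqrt t\,|r(t)|/|1-r(t)|$ over $\deg r\le k$, $r(0)=1$, is correct and is exactly where the paper starts. The construction you then propose cannot work, however: the ``main obstacle'' you flag is an impossibility, not a technicality.

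If each $s_j$ satisfies $s_j(0)=1$ and $0\le s_j\le 1$ on $[0,1]$, then so does $r=\prod_j s_j$. Markov's inequality applied to $2r-1$ gives $|r'|\le k^2$ on $[0,1]$, i.e.\ $1-r(t)\le k^2t$. At $t=\kappa^{-2}$ this forces
\[
\frac{\sqrt t\,r(t)}{1-r(t)}\ \ge\ \frac{\kappa^2-k^2}{\kappa k^2},
\]
which exceeds $3\log\kappa/k$ for every $k$ with $3\log\kappa<k<\kappa/(4\log\kappa)$. The same argument at degree $m$ gives $1-s_j(t)\le m^2t$, so your requirement $1-s_j(t)\ge c\min(1,m^2t\,4^j)$ is already contradictory for small $t$ once $c\,4^j>1$. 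In short, any polynomial kept in $[0,1]$ (Richardson, products of Chebyshev filters, and so on) inherits a $\kappa/k^2$-type blow-up, in the spirit of Theorem~\ref{t:failure}.

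Your fallback does not help either. Lemma~\ref{l:levin1988} concerns $|x^{1/2}-1/P(x)|$ with $P$ a polynomial in $x$. Your approximant $1/(\sqrt t\,p(t))$ is instead, in $\sigma=\sqrt t$, the reciprocal of an odd polynomial, which vanishes at $\sigma=0$. So no uniform approximation down to $0$ exists, and dependence on the endpoint $1/\kappa$ is intrinsic.

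The missing idea is to let $r$ go negative at the bottom of the spectrum, i.e.\ let $t\,p(t)$ overshoot $1$. Whenever $r(t)\le 0$ one has $|r|/|1-r|=|r|/(1+|r|)\le 1$, so the quantity is at most $\sqrt t$ however large $|r|$ is. The paper sets $\tau=(3\log\kappa/k)^2$ and
\[
r(t)=(1-\kappa^2t)\,\frac{T_{k-1}\bigl(\frac{1+\tau-2t}{1-\tau}\bigr)}{T_{k-1}\bigl(\frac{1+\tau}{1-\tau}\bigr)}.
\]
The linear factor drops $r$ from $1$ to $0$ over $[0,\kappa^{-2}]$, which is precisely what Markov forbids for an $r$ bounded by $1$ on $[0,1]$. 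It also makes $r\le 0$ on $[\kappa^{-2},\tau]$, giving the bound $\sqrt\tau$ there.

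On $(\tau,1]$, the Chebyshev ratio is at most $2e^2\kappa^{-6}$, since $T_{k-1}(\frac{1+\tau}{1-\tau})\ge\frac12 e^{2(k-1)\sqrt\tau}$. This beats the $\kappa^2$ growth of the linear factor and yields $|r|\le 2/\kappa$, hence a bound $3/\kappa\le 3\log\kappa/k$ when $k\le\frac12\kappa\log\kappa$. So $\log\kappa$ enters because the exponential Chebyshev decay $e^{2k\sqrt\tau}$ must beat a power of $\kappa$, not by counting dyadic scales; a single scale suffices.

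The remaining cases are handled separately. The regime $k>\frac12\kappa\log\kappa$, which your plan does not address, uses the CG forward-error bound on the normal equations together with Lemma~\ref{l:forward}. When $3\log\kappa/k\ge 1$, the factor $1-\kappa^2t$ alone suffices.
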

\begin{remark}
     Our experiments in Section~\ref{s:experiments} suggest that the logarithmic condition number dependence is likely unavoidable but the constant 3 can likely be improved. 
\end{remark}
\begin{proof}
First, note that for $k$ larger than $\frac{1}{2}\kappa \log{\kappa}$, where we let $\kappa := \kappa(\A)$, the required bound follows from Lemma~\ref{l:forward} and using forward error of the standard Krylov methods such as CG on normal equations. Indeed, letting $\x_* = \A^{-1}\b$ be the optimum and $\x_k^{\rm cg}\in \Kc_{k}(\A^\top\A,\A^\top\b)$ be the $k$-the iterate of CG on $\A^\top\A\x=\A^\top\b$,
\[
\frac{\|\x_k^{\rm cg}-\x_*\|_{\A^\top \A}}{\|\x_*\|_{\A^\top \A}}
\le
2\left(\frac{\kappa-1}{\kappa+1}\right)^k,
\]
so by Lemma~\ref{l:forward} we have 
$$\berr_{\A,\b}(\x_k) \le \berr_{\A,\b}(\x_k^{\rm cg}) \le \frac{2(\frac{\kappa-1}{\kappa+1})^k}{1 - 2(\frac{\kappa-1}{\kappa+1})^k} \le \frac{3\log\kappa}{k},$$
where the last step follows via basic calculus for all $\kappa \ge 1$ and $k \ge \max\{2,\frac{1}{2}\kappa \log{\kappa}\}$. 
Now, let us proceed as in the psd case above by assuming without loss of generality  that $\|\A\|_2=1$ so that $\sigma_{\min}(\A) = 1/\kappa$. 
With $\sigma_i$ denoting $i$-th singular value of~$\A$,
    \begin{align*}
        \min_{\x\in \Kc_{k}(\A^\top\A,\A^\top\b)}\berr_{\A,\b}(\x) 
        &= \min_{p\in\Pc_{k-1}} \frac{\|\A p(\A^\top\A)\A^\top\b - \b\|_2}{\|p(\A^\top\A)\A^\top\b\|_2} \\
        &= \min_{p\in\Pc_{k-1}} \max_{i} \Big|\frac{\sigma_i^2 p(\sigma_i^2)-1}{\sigma_i p(\sigma_i^2)} \Big| \\
        &\leq \min_{p\in\Pc_{k-1}}  \max_{x\in[1/\kappa^2,1]} \Big| \frac{x\, p(x) - 1}{\sqrt x\, p(x)}\Big|. 
    \end{align*}
    For convenience, consider a polynomial $q(x) := 1 - x p(x)$ and note that it is enough to find a degree $k$ polynomial $q(x)$ such that $q(0) = 1$ and 
\begin{equation}\label{eq:minberne-target}
        x\left(\frac{q(x)}{1 - q(x)}\right)^2 \le t_{k,\kappa} := \left(\frac{3 \log \kappa}{k}\right)^2 \qquad \text {for all } \; x \in [1/\kappa^2, 1].
    \end{equation}
    First, if $t_{\kappa,k} \ge 1$ then we can take $q(x) := 1 - \kappa^2 x$ which is $\leq 0$ for $x \ge 1/\kappa^2$, so
$$
  x\left(\frac{q(x)}{1 - q(x)}\right)^2 \le  x\left(\frac{|q(x)|}{1 + |q(x)|}\right)^2 \le x \le 1 \le t_{\kappa,k}, $$
  to satisfy \eqref{eq:minberne-target}. So, without loss of generality, let us further assume that $t := t_{\kappa, k} < 1$, 
  i.e., $k> 3\log \kappa$. For the rest of the proof, we verify \eqref{eq:minberne-target} for all $3\log\kappa < k \le \frac{1}{2} \kappa \log(\kappa)$.

Let $T_m(x) = \cos(m \arccos x)$ be the Chebyshev polynomial of degree $m$ and define
    $$
    q(x) := (1 - \kappa^2 x) \frac{T_{k-1}(\frac{1 + t - 2x}{1 - t})}{T_{k-1}(\frac{1 + t}{1 - t})}.$$
Note that \(q(0)=1\), and \(\deg(q)\le k\). For $x\le t$, we have that $x_1 := (1 + t - 2x) /(1-t)$ and $x_2 := (1 + t) /(1-t)$ satisfy $1\le x_1\le x_2$, so $T_{k-1}$ is non-negative increasing from $x_1$ to $x_2$. Thus, for all $x \in [1/\kappa^2, t]$, we have $q(x) \le 0$ and
$$
  x\left(\frac{q(x)}{1 - q(x)}\right)^2 \le  t\left(\frac{|q(x)|}{1 + |q(x)|}\right)^2 \le t,$$
  thereby verifying \eqref{eq:minberne-target} in this sub-range.

  Further, if $x \in (t, 1],$ then $x_1 \in [-1,1] $ and so $|T_{k-1}(x_1)| \le 1$. Then, using the standard Chebyshev identity
\[
T_m\left(\frac{1+r^2}{1-r^2}\right)
=
\frac12\left[
\left(\frac{1+r}{1-r}\right)^m
+
\left(\frac{1-r}{1+r}\right)^m
\right],
\]
with \(r=\sqrt t\), we obtain
\[
\begin{aligned}
T_{k-1}\left(\frac{1+t}{1-t}\right)
\ge
\frac12
\left(\frac{1+\sqrt t}{1-\sqrt t}\right)^{k-1} &\ge
\frac12 \exp\!\left(2(k-1)\sqrt t\right) 
=
\frac12 \kappa^{6(1-1/k)}
\ge
\frac{\kappa^6}{2e^2}.
\end{aligned}
\]
Then, for $\kappa \ge 6$,
$$|q(x)| \le 2\kappa^2 x\frac{2e^2}{\kappa^6} \le \frac{2}{\kappa},$$
and so,
$$
x\left(\frac{q(x)}{1-q(x)}\right)^2
\le \left(\frac{3}{\kappa}\right)^2
\le \left(\frac{3 \log \kappa}{k}\right)^2,
\]
where we used $x \le 1$ and $k\le \frac12\kappa\log\kappa$. Observe that  $3\log\kappa < k \le \frac{1}{2} \kappa \log(\kappa)$ implies \(\kappa>6\), so there are no additional restrictions on the condition number.
\end{proof}
For highly ill-conditioned problems, the logarithmic dependence of the convergence rate on the condition number of $\A$ can be replaced by a logarithmic dependence on its dimension $n$.  This is achieved by relying on classical results from smoothed analysis \cite{sankar2006smoothed}, which show that perturbing the input with small Gaussian noise reduces the condition number to polynomial in $n$. 
\begin{theorem}[Perturbed \algne]\label{t:perturbed-minberrne}
Given $\A\in\R^{n\times n}$, $\b\in\R^n$, and $\epsilon,\delta\in(0,1/3)$, if $\G\in\R^{n\times n}$ has independent standard Gaussian entries, then 
\begin{align}
  \tilde\x_{k} \ :=   \!\!\argmin_{\x\in \Kc_{k}(\tilde\A^\top\tilde\A,\tilde\A^\top\b)}\!\berr_{\tilde\A,\b}(\x),\quad\text{where}\quad\tilde\A = \A+ \tfrac{\epsilon\|\A\|_2}{3\sqrt n}\G,\label{eq:perturbed-minberrne}
\end{align}       
with probability $1-\delta-e^{-n/2}$ for all $k\ge 2$  satisfies:
\begin{align*}
    \berr_{\A,\b}(\tilde\x_k) \leq \frac{4\log(10n/\epsilon\delta)}{k} + \epsilon.
\end{align*}
\end{theorem}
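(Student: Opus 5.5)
We combine Lemma~\ref{l:composition}, Theorem~\ref{t:minberrne} applied to $\tilde\A$, and a smoothed-analysis bound on $\kappa(\tilde\A)$. The argument has three steps.

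\emph{Step 1: control the size of the perturbation.} Write $\tilde\A-\A=\frac{\epsilon\|\A\|_2}{3\sqrt n}\G$. By the standard Gaussian concentration bound $\|\G\|_2\le 2\sqrt n+t$ with probability $1-e^{-t^2/2}$, taking $t=\sqrt n$ gives $\|\G\|_2\le 3\sqrt n$ with probability at least $1-e^{-n/2}$. On this event $\|\tilde\A-\A\|_2\le \epsilon\|\A\|_2$. Hence $\|\tilde\A\|_2\le (1+\epsilon)\|\A\|_2\le \frac43\|\A\|_2$, and Lemma~\ref{l:composition} gives
\begin{align*}
\berr_{\A,\b}(\tilde\x_k)\le (1+\epsilon)\,\berr_{\tilde\A,\b}(\tilde\x_k)+\epsilon .
\end{align*}

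\emph{Step 2: bound the condition number of $\tilde\A$.} Apply the smoothed-analysis result of Sankar, Spielman and Teng \cite{sankar2006smoothed}: for any fixed $\bar\A$ and Gaussian $\G$ with entries of variance $\sigma^2$,
\begin{align*}
\Pr\big[\sigma_{\min}(\bar\A+\sigma\G)\le x\big]\le C\sqrt n\,x/\sigma ,
\end{align*}
with $C\approx 2.35$. Take $\sigma=\frac{\epsilon\|\A\|_2}{3\sqrt n}$. Choosing $x=\frac{\delta\sigma}{C\sqrt n}$ gives, with probability at least $1-\delta$,
\begin{align*}
\sigma_{\min}(\tilde\A)\ge \frac{\delta\epsilon\|\A\|_2}{3Cn}.
\end{align*}
Combined with Step~1, a union bound shows that with probability $1-\delta-e^{-n/2}$ we have $\tilde\A$ invertible and
\begin{align*}
\kappa(\tilde\A)\le \frac{(4/3)\cdot 3C\,n}{\epsilon\delta}\le \frac{10n}{\epsilon\delta}.
\end{align*}
The constant works out since $4C\approx 9.4<10$; I would double-check it against the exact form of the cited tail bound.

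\emph{Step 3: apply Theorem~\ref{t:minberrne} to $\tilde\A$.} Since $\tilde\x_k$ is exactly the \algne\ iterate for the system $(\tilde\A,\b)$ and $\tilde\A$ is invertible, Theorem~\ref{t:minberrne} gives
\begin{align*}
\berr_{\tilde\A,\b}(\tilde\x_k)\le \frac{3\log(10n/\epsilon\delta)}{k}.
\end{align*}
Multiplying by $1+\epsilon\le 4/3$ yields the factor $4$, and adding the $\epsilon$ from Step~1 gives the claim.

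The main obstacle is Step~2. The difficulty is invoking the smoothed-analysis bound in the right normalization, with variance $\sigma^2$ and an explicit constant, and making sure the resulting constant fits under $10$ after accounting for the $(1+\epsilon)$ inflation of $\|\tilde\A\|_2$. If the cited constant is slightly worse, I would absorb it by using the slack in $\log$ or in $\epsilon<1/3$.
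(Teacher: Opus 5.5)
Your proposal is correct and matches the paper's proof step for step. The paper also uses the Davidson--Szarek bound $\|\G\|_2\le 3\sqrt n$ to invoke Lemma~\ref{l:composition}, and the Sankar--Spielman--Teng tail bound to get $\|\tilde\A^{-1}\|_2\le 7.05n/(\epsilon\delta\|\A\|_2)$, hence $\kappa(\tilde\A)\le(1+\epsilon)\,7.05n/(\epsilon\delta)\le 10n/(\epsilon\delta)$, before applying Theorem~\ref{t:minberrne} to $\tilde\A$. Your explicit check that $(1+\epsilon)\cdot 3\le 4$ is a step the paper leaves implicit.
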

\begin{remark}
    With $\epsilon=1/n$, \eqref{eq:perturbed-minberrne} attains an $O(\log(n)/k)$ rate for all $1\le k\le n$.
\end{remark}
\begin{proof}
    Using standard norm bounds for Gaussian matrices  \cite{davidson2001local}, $\|\G\|_2\leq 3\sqrt n$ with probability $1-e^{-n/2}$, so $\|\tilde\A-\A\|_2\leq \epsilon\|\A\|_2$.
    Using Theorem~3.3 of \cite{sankar2006smoothed}, with probability $1-\delta$ we can bound the spectral norm of $\tilde\A^{-1}$ (letting $\alpha=\frac{\epsilon\|\A\|_2}{3\sqrt n}$) as:
    \begin{align*}
        \|\tilde\A^{-1}\|_2 = \alpha^{-1}\|(\alpha^{-1}\A+\G)^{-1}\|_2\leq \alpha^{-1}2.35\sqrt n/\delta = \frac{7.05\, n}{\epsilon\delta\|\A\|_2}.
    \end{align*}
    Thus, with probability $1-\delta-e^{-n/2}$ we have $\kappa(\tilde\A)\leq(1+\epsilon)7.05n/\epsilon\delta\leq 10n/\epsilon\delta$. Combining Theorem \ref{t:minberrne} with Lemma \ref{l:composition} concludes the proof.
\end{proof}

\subsection{Implementation and Complexity} 
Our starting point for the implementation of \algne\  \eqref{eq:minberrne} is the standard bidiagonalization-based decomposition as implemented for example in LSQR \cite{paige1982lsqr}:
\begin{equation}\label{eq:bidiag}
\A\Q_k=\U_k\B_k,     
\end{equation}
 where $\Q_k$ is the $n\times k$ orthonormal basis for $\Kc_k(\A^\top\A,\A^\top\b)$, $\U_k$ is $n\times (k+1)$ orthonormal with first column $\b/\|\b\|_2$, and $\B_k$ is $(k+1)\times k$ lower bidiagonal.
This requires $k$ matrix-vector multiplications with $\A$ and $\A^\top$, plus the orthogonalization cost of $O(nk)$, since it needs to be done only against the current and last vectors. 
This also yields the Lanczos decomposition for $\A^\top \A$ via
$\A^\top \A\Q_k=\Q_k\T_k + \q_{k+1}[0,\ldots,\gamma_{k+1}] $ where $\T_k=\B_k^\top \B_k$ is tridiagonal, and $\gamma_{k+1}=\alpha_k\beta_{k+1}$. 

One can plug these identities into \eqref{eq:minberrGEP} to derive an algorithm analogous to MINBERR (see Algorithm \ref{alg:MINBERRnonsym} below). Remarkably, the bidiagonal structure allows us to obtain an even more stable and efficient implementation. 
\begin{algorithm}[H]
  \caption{\algne: Solver for non-symmetric linear systems}
  \label{alg:MINBERRnonsym}
  \begin{algorithmic}[1]
  \Require{$\A$, $\b$, iterations $k_{\max}$, backward error tolerance $\epsilon$, failure probability $\delta$}
  \Ensure{
$\x_k$ such that either $k=k_{\max}$ or 
 $\berr_{\A,\b}(\x_k)< \epsilon$ }
 \State $\beta_1 = \|\b\|_2$, $\u_1=\b/\beta_1$, 
 $\alpha_1 \q_1= \A^\top \u_1$ s.t. $\|\u_1\|_2=\|\q_1\|_2=1$. 
 \For{$k = 1,2,...,k_{\max}$}
    \State 
    (Bidiagonalize:) 
    Compute $\beta_{k+1}\u_{k+1}=\A\q_k-\alpha_k\u_k$, and 
    $\alpha_{k+1}\q_{k+1} = \A^\top\u_{k+1}-\beta_{k+1}\q_k$, s.t. 
    $\|\u_{k+1}\|_2=\|\q_{k+1}\|_2=1$.
    This yields 
    $\A\Q_k=\U_k\B_k$.
\State (One dqds step to test $\epsilon$-convergence:) 
Set $d:=p_1-\epsilon^2$ (only when $k=1$).
Let $p_k=(\B_k)^2_{k+1,k}, e_{k-1}=(\B_k)^2_{k,k}$. 
Set 
$\hat p_{k-1}=d+e_{k-1}$, 
and $d:=d\cdot p_{k}/\hat p_{k-1}-\epsilon^2$. 
\State If $d>0$, return to step 2. Else, $\sigma_{\min}(\tilde \B_k)=\berr_{\A,\b}(\x_k)<\epsilon,$ so go to step 7. 
 \EndFor    
 
 \State Set $\tilde\B_k$ as the bottom $k\times k$ submatrix of $\B_k$. Let $\v$ be Gaussian.
 \While{ 
$\ell<2.23 \ln(k / \delta^2)$}
 \State Solve $\tilde\B_k^\top\z = \v$ for $\z $, and $\tilde\B_k \v = \z$ for $\v$.
 \State Set $\v:=\v/\|\v\|_2$, \quad 
 $\ell:=\ell+1$.
 \EndWhile
\State $\x_k = \frac{1}{\alpha}\Q \v$ where 
$\alpha = -\frac{1}{\|\b\|_2}(\B_k)_{1,1}(\v)_1.$
  \end{algorithmic}
\end{algorithm}

\begin{proposition}\label{prop:minberrNE}
        Let $(\sigma_{\min},\v_{\min})$ be the smallest singular value and the corresponding right singular vector of the $k\times k$ bidiagonal matrix $\tilde\B_k$ obtained by removing the first row from $\B_k$ in~\eqref{eq:bidiag}. 
        Then,
    defining $\alpha = -\frac{1}{\|\b\|_2}(\B_k)_{1,1}(\v_{\min})_1 $, 
 the MINBERR-NE solution after $k$ iterations satisfies
    \begin{align*}
        \x_k :=\!\argmin_{\x\in \Span(\Q_k)}\!\!\berr_{\A,\b}(\x) \
         = \ \frac1\alpha\Q_k\v_{\min},\qquad \berr_{\A,\b}(\x_k) = \sigma_{\min}.
    \end{align*}
    Moreover, for any $\widehat\v\in\mathbb{R}^{n\times k}$, 
    defining\footnote{Unlike the PSD case, $\alpha=0$ is possible here; in this case, the \algne\ solution does not exist, with the infimum of $\berr_{\A,\b}$ being $1$, where $\x$ approaches 
an 'infinite' multiple of a vector parallel to the smallest singular vector of $\A\Q_k$ with a residual norm $\gg \|\b\|_2$, which is hardly useful. } 
    $\widehat\alpha = -\frac{1}{\|\b\|_2}(\B_k)_{1,1}(\v)_1 $ 
     and 
    $\widehat\x = \frac{1}{\widehat\alpha}\Q_k\widehat\v,$
    we have 
    $
\berr_{\A,\b}(\widehat\x) = \|\tilde \B_k\widehat\v\|_2/\|\widehat \v\|_2$.
\end{proposition}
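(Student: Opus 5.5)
We will mirror the proof of Lemma~\ref{l:quotient}, replacing the Lanczos relation \eqref{eq:Lanczos} with the bidiagonalization \eqref{eq:bidiag}. Writing $\x=\Q_k\y$ and using Lemma~\ref{l:berr-formula}, minimizing $\berr_{\A,\b}(\x)$ over $\Span(\Q_k)$ reduces, as in \eqref{eq:GRQ}, to minimizing $\|\A\Q_k\y-\b\|_2/\|\Q_k\y\|_2$, where we normalize $\|\A\|_2=1$ as in the earlier proofs (otherwise the scaling just carries through). Since $\b=\beta_1\u_1=\U_k\e_1\|\b\|_2$ and $\A\Q_k=\U_k\B_k$, where $\U_k$ has orthonormal columns and $\Q_k$ has orthonormal columns, we get
\[
\frac{\|\A\Q_k\y-\b\|_2}{\|\Q_k\y\|_2}=\frac{\big\|\M\,[-1;\ \y]\big\|_2}{\|\y\|_2},\qquad \M=\begin{bmatrix}\|\b\|_2\e_1 & \B_k\end{bmatrix}\in\R^{(k+1)\times(k+1)}.
\]
Because $\B_k$ is lower bidiagonal, $\M$ is upper bidiagonal. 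Its first row is $[\|\b\|_2,(\B_k)_{1,1},0,\dots,0]$, and its remaining rows form $[\zero\ \tilde\B_k]$.

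The key step is the change of variables. We will not go through the congruence $\W$ in full generality. Instead, we directly set $[-1;\ \y]=\frac1{\widehat\alpha}\,[\widehat\alpha;\ \widehat\v]$ with $\y=\widehat\v/\widehat\alpha$. Choosing $\widehat\alpha=-(\B_k)_{1,1}(\widehat\v)_1/\|\b\|_2$ kills the first entry of $\M[\widehat\alpha;\widehat\v]$, because that entry equals $\|\b\|_2\widehat\alpha+(\B_k)_{1,1}(\widehat\v)_1=0$. Hence $\M[\widehat\alpha;\widehat\v]=[0;\ \tilde\B_k\widehat\v]$. However, the first coordinate of $[\widehat\alpha;\widehat\v]$ should be $-1$ times the scaling, so we must double-check the sign convention. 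With $\y=\widehat\v/\widehat\alpha$ as in the statement, the vector $[-1;\y]$ is a multiple of $[-\widehat\alpha;\widehat\v]$. We will resolve this by matching exactly the definition used in Lemma~\ref{l:quotient}, i.e.\ by interpreting the first column of $\M$ with the sign that makes the displayed $\alpha$ correct. Up to this bookkeeping, the ratio becomes $\|\tilde\B_k\widehat\v\|_2/\|\widehat\v\|_2$, since the common factor $1/|\widehat\alpha|$ cancels. This proves the "moreover" claim for every $\widehat\v$ with $\widehat\alpha\neq0$.

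Minimization then follows immediately. Every $\y\in\R^k$ arises this way: given $\y$, set $\widehat\v=\y$, and then the ratio equals $\|\M[-1;\y]\|_2/\|\y\|_2$. This equals the expression in $\widehat\v$ because $[-1;\y]$ and $[\widehat\alpha;\widehat\v]/\widehat\alpha$ differ only in the first coordinate, which is precisely the coordinate eliminated by the choice of $\widehat\alpha$. More carefully, we will write $\M[-1;\y]=[\,-\|\b\|_2+(\B_k)_{1,1}y_1;\ \tilde\B_k\y\,]$, and the first entry is generally nonzero. Therefore the achievable ratios form the set $\{\|\tilde\B_k\v\|/\|\v\|\}$ only after rescaling $\y$, and the correct parametrization is the projective one from the congruence argument. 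We will therefore present it via the generalized eigenproblem \eqref{eq:MM} with $\M$ as above. The eigenvalue at infinity is deflated by $\W$, whose first row is $\frac1{\|\b\|_2}[1,-(\B_k)_{1,1},0,\dots]$. Congruence invariance then shows that the finite eigenvalues are those of $\tilde\B_k^\top\tilde\B_k$, so the minimum equals $\sigma_{\min}^2$, attained at $\W[0;\v_{\min}]=[\alpha;\v_{\min}]$, exactly as in Lemma~\ref{l:quotient}.

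The main obstacle is this bookkeeping in the deflation: checking that $\M\W$ has first row $[1,0,\dots,0]$ and lower block $\tilde\B_k$, and handling the degenerate case $\alpha=0$, where the minimum is not attained (see the footnote). Since $\M$ is bidiagonal, only the single entry $(\B_k)_{1,1}$ needs to be eliminated. This is the main simplification compared with the tridiagonal case, where two entries had to be eliminated.
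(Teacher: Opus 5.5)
Your route is the paper's: use $\b=\|\b\|_2\mathbf u_1$ and $\A\Q_k=\U_k\B_k$ to reduce to the upper-bidiagonal $\M=[\|\b\|_2\e_1\ \ \B_k]$. Then deflate the infinite eigenvalue with a congruence $\W$ that eliminates the single entry $(\B_k)_{1,1}$, and read off $\sigma_{\min}(\tilde\B_k)$.

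\textbf{The gap.} It is exactly the sign issue you noticed and then deferred. ``Interpreting the first column of $\M$ with the sign that makes the displayed $\alpha$ correct'' is not an available move. That column is forced to be $+\|\b\|_2\e_1$ by the residual $\A\x-\b$ and by $\b=\beta_1\mathbf u_1$ with $\beta_1>0$. No bookkeeping can rescue the displayed formula, because it is off by a sign.

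\textbf{How to finish.} The cleanest route also replaces your retracted claim that every $\y$ arises by taking $\widehat{\mathbf v}=\y$:
\[
\Big\|\M\begin{bmatrix}-1\\ \y\end{bmatrix}\Big\|_2^2=\big((\B_k)_{1,1}y_1-\|\b\|_2\big)^2+\|\tilde\B_k\y\|_2^2\ \ge\ \sigma_{\min}^2\|\y\|_2^2 .
\]
Equality holds iff $\y$ lies in the right singular subspace for $\sigma_{\min}$ and $(\B_k)_{1,1}y_1=\|\b\|_2$. For $\y=c\,\mathbf v_{\min}$ this forces
\[
c=\frac{\|\b\|_2}{(\B_k)_{1,1}(\mathbf v_{\min})_1}=-\frac1\alpha .
\]
Equivalently, $\W[0;\mathbf v_{\min}]=[\alpha;\mathbf v_{\min}]$ must be a multiple of $[-1;\y]$, not of $[1;\y]$. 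So the minimizer is $\x_k=-\frac1\alpha\Q_k\mathbf v_{\min}$. In the second part, likewise, $\widehat\x=-\frac1{\widehat\alpha}\Q_k\widehat{\mathbf v}$. The values $\sigma_{\min}$ and $\|\tilde\B_k\widehat{\mathbf v}\|_2/\|\widehat{\mathbf v}\|_2$ are unaffected.

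\textbf{A quick check.} Take $n=k=1$, $\A=a>0$, $\b=b>0$. Then $\B_1=[a;0]$, $\tilde\B_1=[0]$, $\mathbf v_{\min}=1$ and $\alpha=-a/b$. The stated formula gives $\frac1\alpha\Q_1\mathbf v_{\min}=-b/a$, whose backward error is $2$. By contrast, $-\frac1\alpha\Q_1\mathbf v_{\min}=b/a$ is the exact solution.

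The paper's own argument has the same slip. Appendix~\ref{a:minberr-ne} defers to Lemma~\ref{l:quotient}, whose proof matches $\W[0;\mathbf v_{\min}]$ with $[1;\y]$. It also asserts that the first entry of $\frac1{\widehat\alpha}\W[0;\widehat{\mathbf v}]$ is $-1$, whereas this vector equals $[1;\widehat{\mathbf v}/\widehat\alpha]$. So your suspicion was right. The correct fix is to flip the sign of $\alpha$ (or of $\x_k$), not the sign of $\M$'s first column.

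The displayed inequality also settles the degenerate case you list as an obstacle. Since $(\B_k)_{1,1}=\|\A^\top\b\|_2/\|\b\|_2>0$, the value $\sigma_{\min}$ is attained precisely when the $\sigma_{\min}$-singular subspace of $\tilde\B_k$ contains a vector with nonzero first entry. Otherwise it is only approached as $\|\y\|_2\to\infty$.
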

The proof of Proposition \ref{prop:minberrNE} is similar to Lemma~\ref{l:quotient} and 
deferred to Appendix \ref{a:minberr-ne}. 

A very efficient way to check if $\sigma_{\min}(\tilde\B_k)<\epsilon$ for a prescribed tolerance $\epsilon$ is to run a single step of the dqds algorithm~\cite{dqdsori} with shift $\epsilon$, which is equivalent to performing the Cholesky factorization but can preserve relative high accuracy. 

Once convergence is detected so that $\sigma_{\min}(\tilde\B_k)<\epsilon$, we terminate the \algne\ iterations and compute the singular vector corresponding to the smallest singular value of $\tilde\B_k$. As for $\A\succ 0$, we can do this by inverse iteration with respect to $\tilde\B_k^\top\tilde\B_k$. To compute $(\tilde\B_k^\top \tilde\B_k)^{-1}\v$ we solve $\tilde\B_k^\top\z = \v$ for $\z $, then $\tilde\B_k \v = \z$; each is a bidiagonal linear system that can be solved in $O(k)$ operations. 
Finally, as before, once $\v_{\min}$ is (approximately) obtained, one can find the \algne\ solution  $\x_k = \frac{1}{\alpha}\Q_k\v_{\min}$, where 
$\alpha = -\frac{1}{\|\b\|_2}(\B_k)_{1,1}(\v_{\min})_1 $. Altogether, we obtain the following  guarantee, where $\mathcal{T}_{\A^\top}$ denotes the cost for multiplying a vector by $\A^\top$.

\begin{theorem}\label{thm:minberrnecost}
Let $\A\in\R^{n\times n}$ be invertible, $\b\in\mathbb{R}^n$, and $\x_k$ be as in \eqref{eq:minberrne}. 
Running $k$ steps of \algne\ (Algorithm~\ref{alg:MINBERRnonsym}) 
yields $\widehat\x_k$ so that with probability $1-\delta$, $\berr_{\A,\b}(\widehat\x_k)\leq 
(1+\frac{1}{2})\berr_{\A,\b}(\x_k)$ using  
$O\left(k(\mathcal{T}_\A+\mathcal{T}_{\A^\top}+n+\log(\frac{1}{\delta})) \right)$ operations.
\end{theorem}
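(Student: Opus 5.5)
The proof mirrors that of Theorem~\ref{thm:minberrcost}. We account for the cost of each step of Algorithm~\ref{alg:MINBERRnonsym}. Then we combine Proposition~\ref{prop:minberrNE} with the inverse iteration guarantee of Lemma~\ref{lemma:invit} to get the $(1+\frac12)$ approximation factor.

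\emph{Cost.} Step 3 (Golub--Kahan bidiagonalization) uses one product with $\A$ and one with $\A^\top$ per iteration, plus $O(n)$ work for the short recurrences (two vector updates and two normalizations). Over $k$ iterations this totals $O(k(\Tc_\A+\Tc_{\A^\top}+n))$. Steps 4--5 (one dqds update with shift $\epsilon$) touch only the newly appended entries of $\B_k$, so they cost $O(1)$ per iteration. In steps 7--11, each inverse iteration solves two $k\times k$ bidiagonal systems at $O(k)$ cost, and we run $O(\log(k/\delta^2))$ such iterations. This gives $O(k\log k + k\log(1/\delta))$, and $\log k\le \log n$ is absorbed into the $O(kn)$ term. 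Forming $\widehat\x_k=\frac1\alpha\Q_k\v$ costs $O(nk)$. Summing gives the stated bound.

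\emph{Accuracy.} By Proposition~\ref{prop:minberrNE}, $\berr_{\A,\b}(\x_k)=\sigma_{\min}(\tilde\B_k)$. For any vector $\v$ returned by the inverse iteration, the output $\widehat\x_k$ satisfies $\berr_{\A,\b}(\widehat\x_k)=\|\tilde\B_k\v\|_2/\|\v\|_2$, which is the square root of the Rayleigh quotient of $\tilde\B_k^\top\tilde\B_k$ at $\v$. Since $\A$ is invertible and the basis vectors $\u_i,\q_i$ are well defined (no breakdown before step $k$), the diagonal and subdiagonal entries of $\B_k$ are nonzero. Hence the bidiagonal $\tilde\B_k$ is nonsingular and $\tilde\B_k^\top\tilde\B_k$ is positive definite.

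Applying Lemma~\ref{lemma:invit} with dimension $k$ and $\Delta=1/2$ shows that $\ell\ge 2.23\ln(k/\delta^2)$ steps give, with probability $1-\delta$, a Rayleigh quotient at most $(1+\frac12)\sigma_{\min}^2$. Taking square roots gives $\berr_{\A,\b}(\widehat\x_k)\le\sqrt{3/2}\,\sigma_{\min}\le(1+\frac12)\berr_{\A,\b}(\x_k)$.

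\emph{Main obstacle: the normalizing scalar.} We need $\widehat\alpha\neq 0$, so that $\widehat\x_k$ is well defined and the identity of Proposition~\ref{prop:minberrNE} applies. Since $(\B_k)_{1,1}=\alpha_1\neq0$, this requires $(\v)_1\neq0$. A Gaussian start makes this hold almost surely. A second issue is checking that the constant $2.23$ indeed satisfies the hypothesis of Lemma~\ref{lemma:invit} with $n$ replaced by $k$. This is the same routine numerical check as in the PSD case, so it carries over verbatim from Theorem~\ref{thm:minberrcost}.
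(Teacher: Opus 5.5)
Your proposal is correct and follows essentially the same route as the paper: a per-step cost count for Algorithm~\ref{alg:MINBERRnonsym}, then Lemma~\ref{lemma:invit} applied to $\tilde\B_k^\top\tilde\B_k$, with the Rayleigh-quotient bound turned into a backward error bound via Proposition~\ref{prop:minberrNE}. Your extra checks (nonsingularity of $\tilde\B_k$, the step $\sqrt{3/2}\le 3/2$, and $\widehat\alpha\neq 0$ almost surely) fill in details the paper leaves implicit. One caveat: the constant $2.23$ does not follow from Lemma~\ref{lemma:invit}, since for $\Delta=1/2$ its threshold $\tfrac12+\ln\!\big(\tfrac{64k\ln(2/\delta)}{\pi\delta^2}\big)/(2\ln 1.25)$ is always strictly larger than $2.23\ln(k/\delta^2)$, so that check would fail rather than carry over; as in the paper's own proof of this theorem, you only need $\ell=O(\ln(k/\delta))$ inverse iterations (choose the lemma's threshold), and the stated operation count is unchanged.
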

\begin{proof}
We examine the costs for each step of Algorithm~\ref{alg:MINBERRnonsym}. Step 1 needs one $\A^\top$-multiplication, and $O(n)$ operations. Step 3 requires $k$ multiplications each with $\A$ and $\A^\top$, along with inner-product calculations requiring $O(kn)$ operations. Step 4 is a single dqds iteration, so $O(k)$ overall. As for Steps 9--10, the same convergence guarantee as in Lemma~\ref{lemma:invit} applies, so after $\ell =O(\ln(k/\delta))$ iterations of inverse iterations we obtain a computed $\widehat\x_k$ with 
$1/2$-relative accuracy approximation of $\berr_{\A,\b}(\x_k)$. The operation count here is $\ell(\mathcal{T}_{\A}+\mathcal{T}_{\A^\top}+k)$. 
\end{proof}

Combining this with the $O(\log(n)/k)$ convergence guarantee (Theorem~\ref{t:perturbed-minberrne}), which ensures $\epsilon$ backward error in $k=O(\log(n)/\epsilon)$ iterations, gives the following; here we replace $\mathcal{T}_{\A}$ and $\mathcal{T}_{\A^\top}$ with $n^2$ as the Gaussian perturbation forces $\A$ to be dense.
\begin{corollary}\label{c:minberrne}
    Given $\A\in\R^{n\times n}$ and $\b\in\R^n$, perturbed \algne\ finds a solution with $\epsilon$ backward error in 
    $O(n^2\log(n/\delta)/\epsilon)$ 
    time with probability at least $1-\delta$.
\end{corollary}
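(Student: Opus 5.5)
The proof combines Theorem~\ref{t:perturbed-minberrne} with Theorem~\ref{thm:minberrnecost}, applied to the perturbed matrix $\tilde\A = \A+\tfrac{\epsilon'\|\A\|_2}{3\sqrt n}\G$ rather than to $\A$. Forming $\tilde\A$ costs $O(n^2)$: we generate the $n^2$ Gaussian entries and need an estimate of $\|\A\|_2$. A constant-factor estimate of $\|\A\|_2$ suffices, since Lemma~\ref{l:composition} only needs $\|\tilde\A-\A\|_2\le\epsilon'\|\A\|_2$, up to constants. Such an estimate is available via a few power iterations with the Kuczy\'nski--Wo\'zniakowski bound, or, more crudely, the Frobenius norm divided by $\sqrt n$. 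The cleanest choice is $\|\A\|_F/\sqrt n\le\|\A\|_2$, computable in $O(n^2)$. With it the perturbation is $\le\epsilon'\|\A\|_2$, but possibly smaller. A smaller perturbation worsens the smallest-singular-value bound by at most a factor $\sqrt n$, so $\kappa(\tilde\A)\le\poly(n)/(\epsilon'\delta)$ still holds. The argument only uses $\log\kappa(\tilde\A)=O(\log(n/\epsilon\delta))$, so this is harmless.

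Set $\epsilon'=\epsilon/3$, and split the failure probability as $\delta/3$ for the smoothed-analysis event, $\delta/3$ for inverse iteration, and $e^{-n/2}$ for the Gaussian norm bound. If $e^{-n/2}>\delta/3$, then $n=O(\log(1/\delta))$, and a direct $O(n^3)$ solve costs $O(n^2\log(1/\delta))$, which is within budget; so we may assume $e^{-n/2}\le\delta/3$. On the good event, Theorem~\ref{t:perturbed-minberrne} (with $\epsilon',\delta/3$) gives that the exact \algne\ iterate $\tilde\x_k$ on $\tilde\A$ satisfies
\[
\berr_{\A,\b}(\tilde\x_k)\le \frac{4\log(90n/\epsilon\delta)}{k}+\frac{\epsilon}{3}.
\]
However, Algorithm~\ref{alg:MINBERRnonsym} only returns $\widehat\x_k$ with $\berr_{\tilde\A,\b}(\widehat\x_k)\le\tfrac32\berr_{\tilde\A,\b}(\tilde\x_k)$ (Theorem~\ref{thm:minberrnecost}, failure probability $\delta/3$). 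So we transfer through $\tilde\A$. Theorem~\ref{t:minberrne} gives $\berr_{\tilde\A,\b}(\tilde\x_k)\le 3\log\kappa(\tilde\A)/k$. Multiplying by $3/2$ and then applying Lemma~\ref{l:composition} yields
\[
\berr_{\A,\b}(\widehat\x_k)\le (1+\epsilon')\tfrac{9\log\kappa(\tilde\A)}{2k}+\epsilon' .
\]
Choosing $k=\lceil C\log(n/\epsilon\delta)/\epsilon\rceil$ for a suitable constant $C$ makes this at most $\epsilon$.

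For the cost, each iteration of Algorithm~\ref{alg:MINBERRnonsym} applies the dense $\tilde\A$ and $\tilde\A^\top$, so $\Tc_{\tilde\A}=\Tc_{\tilde\A^\top}=O(n^2)$. Theorem~\ref{thm:minberrnecost} then gives
\[
O\bigl(k(n^2+\log(1/\delta))\bigr)=O\bigl(n^2\log(n/\epsilon\delta)/\epsilon\bigr).
\]

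The main obstacle is the stated bound $\log(n/\delta)$, which has no $\log(1/\epsilon)$. I would handle it in two regimes. If $\epsilon\ge 1/n$, then $\log(n/\epsilon\delta)\le\log(n^2/\delta)=O(\log(n/\delta))$. If $\epsilon<1/n$, we instead fall back to a direct backward-stable solve costing $O(n^3)\le O(n^2/\epsilon)$, or run $k=n$ iterations, where \algne\ is exact in exact arithmetic. Either way the total is $O(n^2\log(n/\delta)/\epsilon)$ with probability at least $1-\delta$.
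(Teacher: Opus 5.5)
Your proposal is correct and takes the same route as the paper. The paper justifies the corollary in one line: it combines Theorem~\ref{t:perturbed-minberrne} with Theorem~\ref{thm:minberrnecost} for the dense $\tilde\A$, and it drops the $\log(1/\epsilon)$ factor only implicitly, through the remark's perturbation level $1/n$. Your version makes explicit what the paper leaves unstated: carrying the $\tfrac32$ factor through $\tilde\A$ via Theorem~\ref{t:minberrne} and Lemma~\ref{l:composition}, the $e^{-n/2}$ failure term, estimating $\|\A\|_2$, and the $\epsilon<1/n$ fallback. All of these are handled soundly, with one small caveat: for singular $\A$, the small-$n$ fallback should solve the perturbed system or use an SVD rather than a plain direct solve of $\A\x=\b$.
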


\begin{figure}[t]
\begin{center} \begin{subfigure}{0.32\textwidth}
     \includegraphics[width=\textwidth]{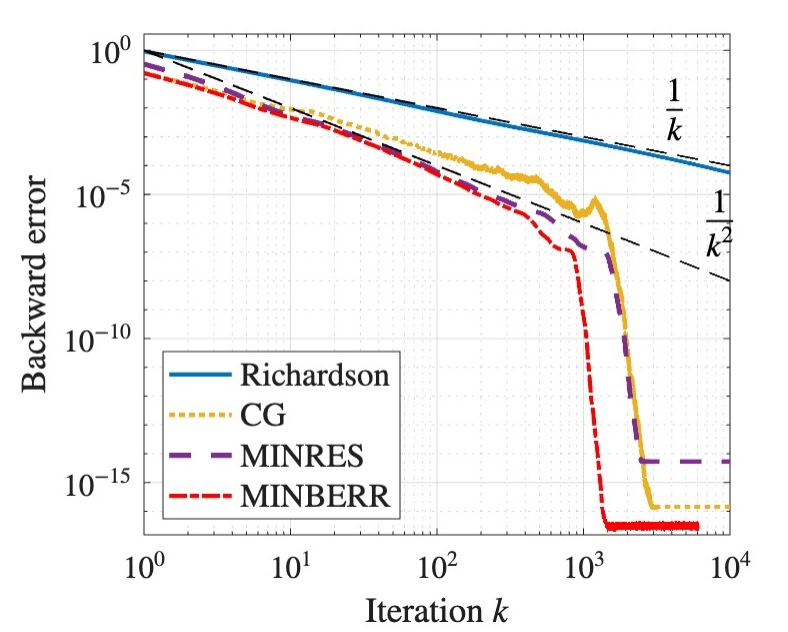}
     \label{fig:poisson3da}     \vspace{-3mm}\caption{\footnotesize\texttt{Pres\_Poisson} data:\\ PSD matrix, $n=14822$\\ (Fluid Dynamics)
    }
 \end{subfigure}
\begin{subfigure}{0.32\textwidth}
\includegraphics[width=\textwidth]{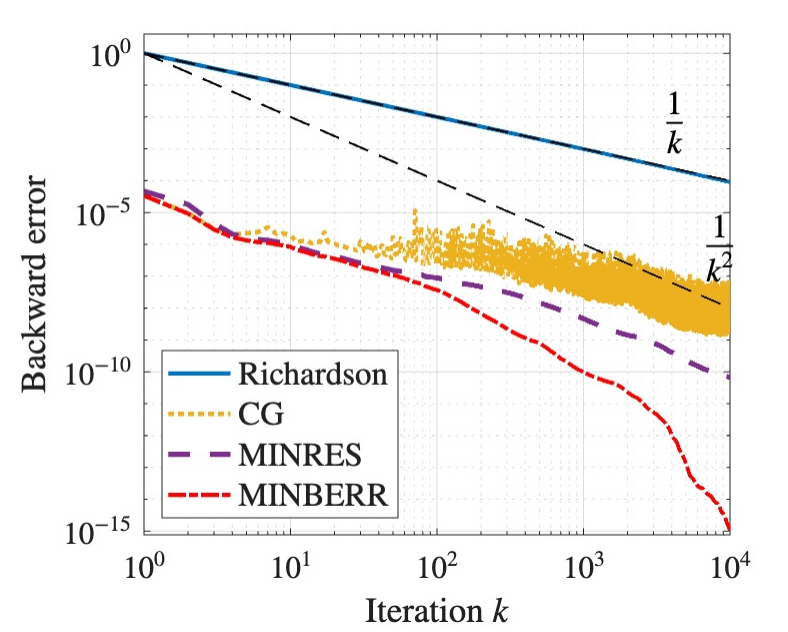}    \label{fig:olafu}
     \vspace{-3mm}\caption{\footnotesize\texttt{olafu} data:\\
PSD matrix, $n=16146$\\
(Aerospace Structural Model)
     } 
 \end{subfigure}
 \begin{subfigure}{0.32\textwidth}
\includegraphics[width=\textwidth]{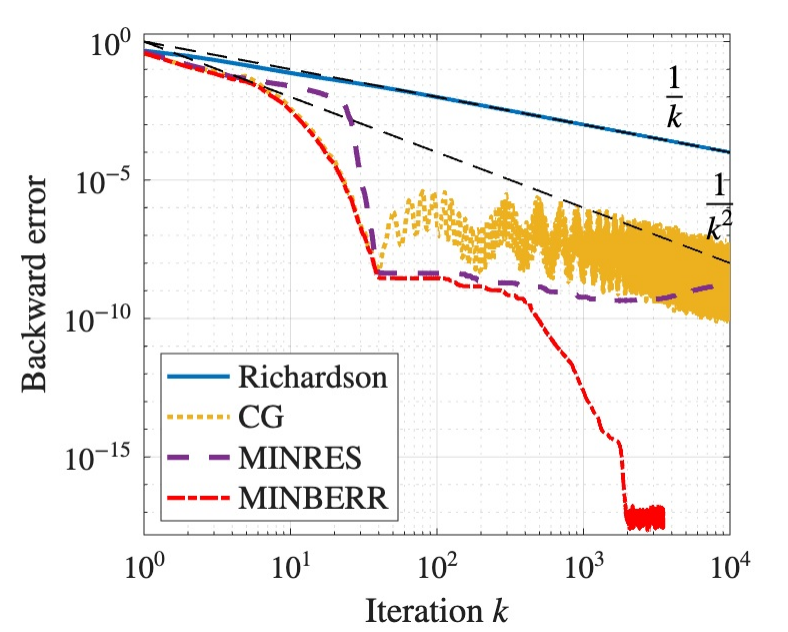}    \label{fig:raefsky}
     \vspace{-3mm}\caption{\footnotesize\texttt{raefsky4} data:\\
     PSD matrix, $n = 19779$\\
     (Structural Stability Analysis)
     } 
 \end{subfigure}

 \begin{subfigure}{0.32\textwidth}
     \includegraphics[width=\textwidth]{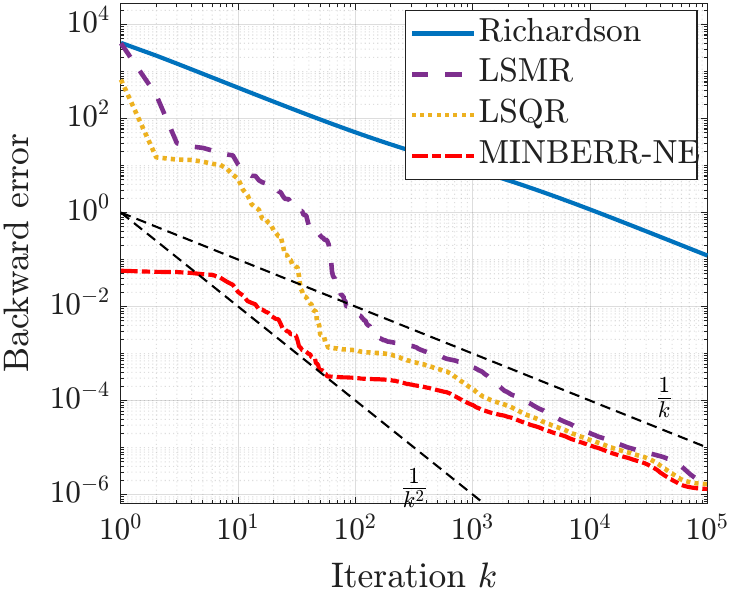}
     \label{fig:sherman}
\vspace{-3mm}\caption{\footnotesize\texttt{sherman3} data:\\
     square 
     non-PSD, $n = 5005$\\
     (Fluid Dynamics)} 
 \end{subfigure}
 \begin{subfigure}{0.32\textwidth}
\includegraphics[width=\textwidth]{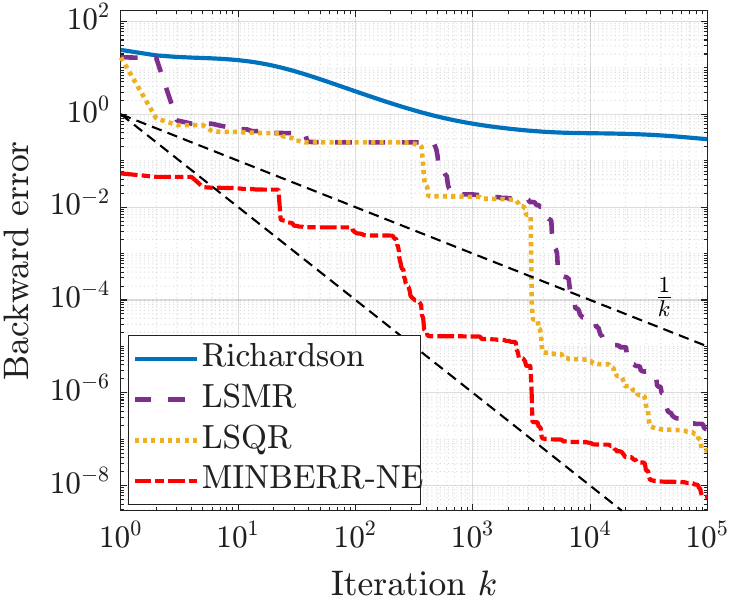}    \label{fig:bayer}
\vspace{-3mm}\caption{\footnotesize\texttt{bayer03} data:\\
     square non-PSD, $n = 6747$\\
     (Chemical Simulation)} 
 \end{subfigure}
 \begin{subfigure}{0.32\textwidth}
\includegraphics[width=\textwidth]{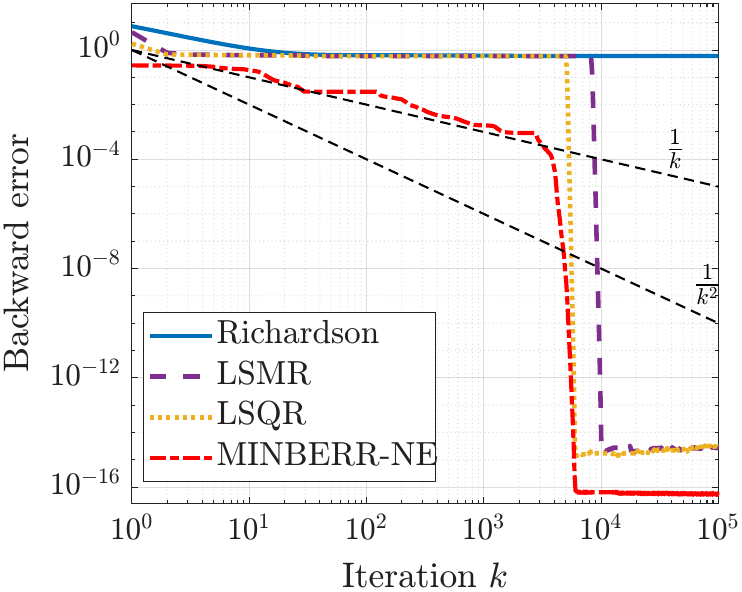}    \label{fig:cyl}
\vspace{-3mm}\caption{\footnotesize\texttt{cyl6} data:\\
    square non-PSD,
    $n = 13681$ \\
    (Shell Mechanics Simulation)
    } 
 \end{subfigure}

\vspace{-2mm} \caption{Empirical evaluation of \algpsd\ (a,b,c) and \algne\ (d,e,f) on benchmark problems \cite{davis2011university} with PSD and square non-PSD matrices, respectively.}
 \label{fig:real}
 \end{center}
\end{figure}

\section{Numerical Experiments}\label{s:experiments}

Here, we provide numerical experiments\footnote{The MATLAB code is publicly available at {\tt https://github.com/nakatsukasayuji/MINBERR}.}
 supporting our analysis, comparing MINBERR and \algne\ to popular methods on several test problems. We first illustrate the typical backward error convergence behavior of different methods on real-world tasks, both with PSD and general matrices. We then perform a more in-depth investigation of specific phenomena by evaluating individual methods on carefully designed synthetic hard problems.

\subsection{Benchmark Problems from Real Data}
We compare the performance of our proposed algorithms against that of several baselines on six problems from the SuiteSparse Matrix Collection \cite{davis2011university}.

\paragraph{PSD problems} In Figure \ref{fig:real} (a,b,c), we compare \algpsd\ against Richardson, CG, and MINRES on three PSD problems. For each method, we plot the backward error against iterations, and we also include lines for the two universal rates attained by our theory: $1/k$ (proven for Richardson) and $1/k^2$ (proven for \algpsd\ up to a constant factor). We observe that Richardson closely follows the theoretical rate, suggesting that the bound in Theorem~\ref{t:richardson} is often tight for typical ill-conditioned problems. On the other hand, \algpsd\ often outperforms its theoretical $1/k^2$ rate, especially later in the convergence. The behavior of CG and MINRES varies substantially between the problems, but they consistently outperform Richardson, and are consistently outperformed by \algpsd.

\paragraph{Non-Symmetric Problems} Next, in Figure \ref{fig:real} (d,e,f) we compare \algne\ against Richardson-NE (Richardson on normal equations), as well as LSQR and LSMR, on three general linear system tasks. First, we observe that all three baselines exhibit some initial blow-up of backward error, in that the convergence curves start far above 1. This phenomenon aligns with our analysis of Richardson on normal equations (Theorem \ref{t:failure}), and is further demonstrated below on synthetic data. \algne\ does not exhibit any initial blow-up and tends to converge at least at a $1/k$ rate. We do not observe the logarithmic dependence on the condition number from Theorem \ref{t:minberrne} on any of these benchmark problems.

\begin{figure}[t]
\includegraphics[width=1\textwidth]{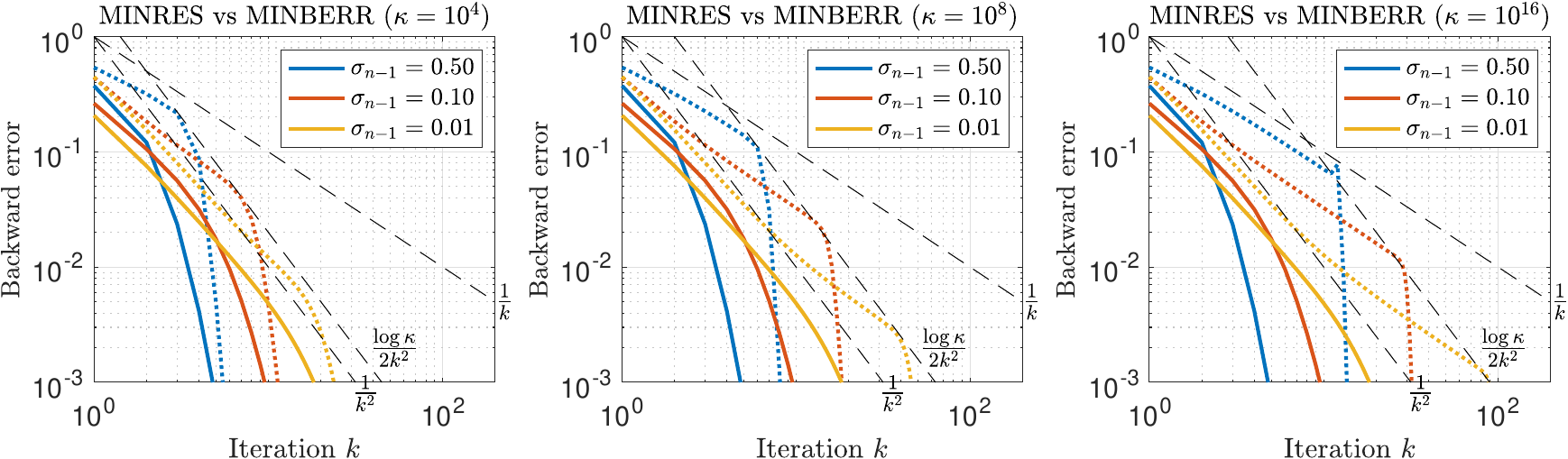}
\caption{Comparing MINRES with MINBERR on the \texttt{Small-Outlier}$(2000,\kappa,\sigma_{n-1})$ synthetic task, while varying $\kappa$ and $\sigma_{n-1}$. MINBERR is denoted by solid lines, while MINRES is shown using dotted lines.}
 \label{fig:minres-worstcase}
\end{figure}

\subsection{Hard Problems from Synthetic Matrices}
We next evaluate the algorithms on synthetic problems which are designed to induce their worst-case performance, and further illustrate our theoretical analysis. We define the following classes of problem instances:
\begin{enumerate}
    \item \texttt{Ill-Conditioned}$(n,\kappa)$. Let $\A$ be an $n\times n$ diagonal matrix with logarithmically spaced entries between 1 and $1/\kappa$, and let $\b = [1,...,1,\kappa]^\top\in\R^n$ consist of all 1's except for the last entry which is equal $\kappa$.
    \item 
    \texttt{Small-Outlier}$(n,\kappa,\sigma_{n-1})$. Let $\A$ be an $n\times n$ diagonal matrix with first $n-1$ entries logarithmically spaced between 1 and $\sigma_{n-1}$, and the last entry equal $1/\kappa$. Also, we let $\b = [1,...,1,\sqrt n]^\top\in\R^n$.
\end{enumerate}
We can extend these constructions to non-diagonal PSD matrices by substituting $\A\rightarrow \U\A\U^\top$, or to non-symmetric matrices with $\A\rightarrow \U\A\V^\top$, and replacing $\b$ with $\U\b$, for some orthogonal matrices $\U,\V$. Our PSD/non-symmetric experiments are invariant under those transformations (up to numerical precision).
In all of the following experiments we use problem dimension $n=2000$.

\paragraph{MINRES vs MINBERR}
We first demonstrate that popular Krylov solvers on PSD systems, even though often effective in practice, may exhibit a backward error rate no better than Richardson in the worst case. Here, we illustrate this by running MINRES on the \texttt{Small-Outlier} problem while varying $\kappa$ and $\sigma_{n-1}$. In Figure \ref{fig:minres-worstcase}, we observe that by adjusting these problem parameters we can force MINRES (dotted lines) to converge nearly as slowly as $1/k$ for an extended number of iterations. Specifically, the results suggest that for any sufficiently large $k$ and $\kappa$, we can find a problem instance for which the backward error of MINRES after $k$ iterations is observed to be at least $\min\{1/k, \log_{10}(\kappa)/(2k^2)\}$. By increasing $\kappa$, we can force the $1/k$ rate to dominate, although this is limited by the mild logarithmic dependence on $\kappa$ observed for the MINRES backward error. We note that MINBERR (solid lines) is not affected by this phenomenon, avoiding condition number dependence altogether. 

\begin{figure}[t]
\includegraphics[width=\textwidth]{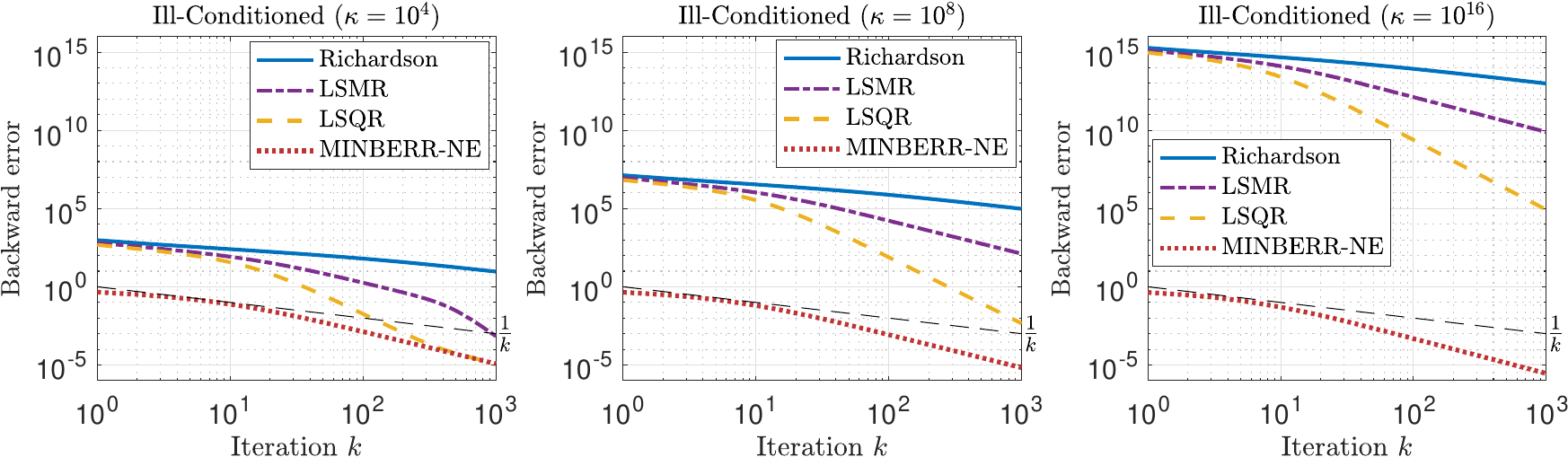}
\caption{Comparing \algne\ against Richardson-NE, LSMR, and LSQR on the \texttt{Ill-Conditioned}$(2000,\kappa)$ synthetic task treated as a general non-symmetric system.}
 \label{fig:nonsym}
\end{figure}

\paragraph{Error Blow-up on Normal Equations}
Turning to non-symmetric systems, we next demonstrate that the initial backward error for Richardson-NE/LSQR/LSMR can indeed be as large as the condition number of the matrix, as suggested by Theorem~\ref{t:failure}. In Figure \ref{fig:nonsym}, we illustrate this  on the \texttt{Ill-Conditioned} problem with a range of $\kappa$ values. We observe that the backward error blow-up is nearly as severe for LSQR and LSMR as it is for Richardson, although the Krylov solvers exhibit faster convergence after that. Finally, similarly as in all of the benchmark problems, \algne\ does not exhibit any backward error blow-up and its convergence curves consistently lie below the $1/k$ threshold.

\paragraph{Stagnation of \algne}
Unfortunately, it turns out that the logarithmic dependence of \algne\ on the condition number predicted by Theorem \ref{t:minberrne} can be observed in practice. We show this empirically in Figure \ref{fig:nonsym_hard} (top plots), again using the \texttt{Small-Outlier} family of problems. Here, by adjusting the parameters we can force the backward error of \algne\ to stagnate close to 1 for an extended number of iterations. Specifically, our experiments suggest that for any sufficiently large $k$ and $\kappa$, we can find a problem instance for which the backward error of \algne\ after $k$ iterations is at least $\min\{1, \log_{10}(\kappa)/k\}$.

As we can see in Figure \ref{fig:nonsym_hard} (dotted lines), the \algne\ backward error exhibits an initial $1/k$ convergence until reaching a plateau around the value of $\sigma_{n-1}$, and then eventually it resumes fast convergence. By taking the outlier singular value to zero (blowing up $\kappa$), we can extend the plateau, making backward error stagnate for as long as we want (up to the numerical limitations that arise when increasing the condition number). We note that this phenomenon also occurs if we replace the single small outlier with a large cluster of small singular values, or if we make the problem non-symmetric by applying different orthogonal matrices $\U$ and $\V$ on each side.

\begin{figure}[t]
\includegraphics[width=\textwidth]{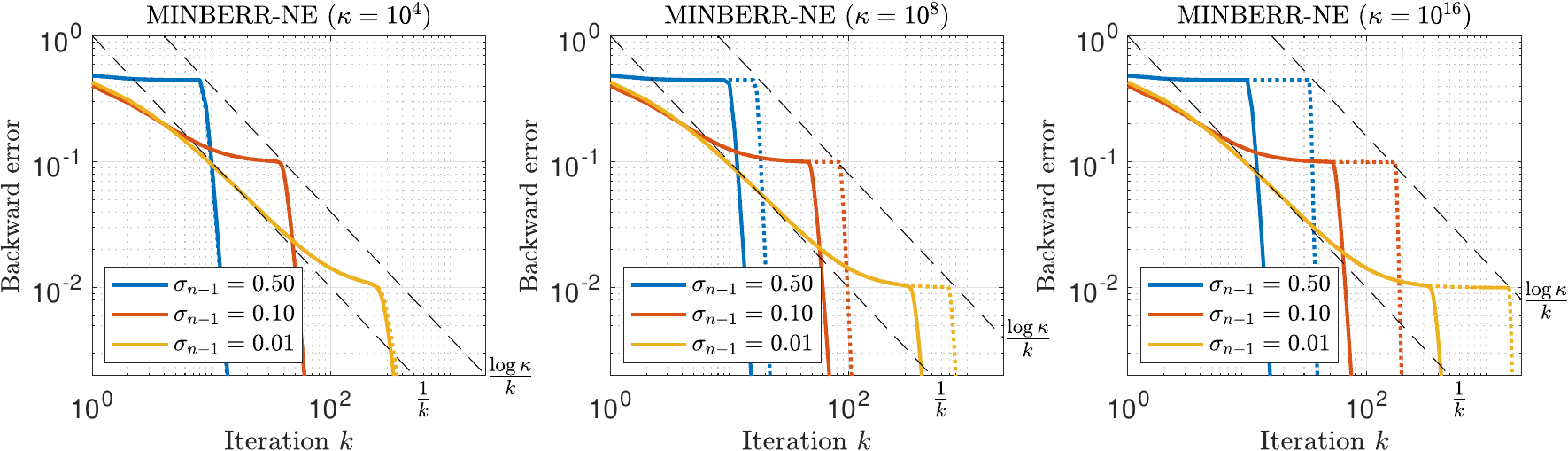}
 \caption{Evaluating \algne\ on the \texttt{Small-Outlier}$(2000,\kappa,\sigma_{n-1})$ synthetic task treated as a general non-symmetric linear system. We use dotted lines to denote \algne\ running on the original task, while solid lines denote \algne\ running on the matrix perturbed with Gaussian noise (in both cases, backward error is computed with respect to the original task).}
 \label{fig:nonsym_hard}
\end{figure}

\paragraph{Perturbed \algne}
As shown in Theorem \ref{t:perturbed-minberrne}, we can get around this issue by perturbing the matrix to reduce its condition number. 
In Figure \ref{fig:nonsym_hard} (solid lines), we replace $\A$ with $\tilde\A=\A + \epsilon\frac{\|\A\|_2}{\|\G\|_2}\G$, where $\G$ has independent Gaussian entries and we let $\epsilon=10^{-3}$. We then run \algne\ on $\tilde\A$ and $\b$, while still evaluating its backward error with respect to the original matrix $\A$. 
We observe that perturbed \algne\ is insensitive to increasing the condition number of the problem.

\section{Conclusions and Future Directions}
\label{s:conclusions}
We showed that, when measured in terms of backward error, the convergence of iterative linear system solvers can be made universal, i.e., independent of the conditioning of the problem. For PSD linear systems, we gave a universal $1/k$ backward error convergence guarantee for the classical Richardson iteration. We then developed an efficient Krylov subspace method, MINBERR, which attains an even better $O(1/k^2)$ universal rate and has strong empirical performance when compared against CG and MINRES on benchmark~problems. 

For general linear systems, we extended our algorithm via the normal equations obtaining \algne\ which attains $O(\log(\kappa(\A))/k)$ backward error. A small Gaussian perturbation of the input ensures a nearly universal $O(\log(n)/k)$ convergence rate of the method. This raises the question whether a dense random perturbation, or indeed any source of randomness, is necessary to achieve this guarantee, and also whether truly universal convergence (i.e., independent of $n$) can be attained for general linear systems. 
Another important future direction is to provide a finite-precision backward error analysis of MINBERR and \algne\ in order to verify the numerical stability of these methods. Finally, despite MINBERR's universal convergence, preconditioning remains a powerful technique to accelerate linear system solvers. Developing a preconditioned version of MINBERR is left for future work.

\section*{Acknowledgments} The authors thank Anil Damle, Ethan Epperly,  Raphael Meyer, Cameron Musco, and Christopher Musco for helpful conversations. An LLM-based assistant was used during the development of some of the proofs. The authors verified all steps and take responsibility for the results.

\bibliographystyle{siamplain}
\bibliography{bib2}

\def\noopsort#1{}\def\l{\char32l}\def\v#1{{\accent20 #1}} \let\^^_=\v\def\hbk{hardback}\def\pbk{paperback}
\begin{thebibliography}{10}

\bibitem{allen2016optimal}
{\sc Z.~Allen-Zhu and E.~Hazan}, {\em Optimal black-box reductions between optimization objectives}, Advances in Neural Information Processing Systems, 29 (2016).

\bibitem{alman2025more}
{\sc J.~Alman, R.~Duan, V.~V. Williams, Y.~Xu, Z.~Xu, and R.~Zhou}, {\em More asymmetry yields faster matrix multiplication}, in Proceedings of the 2025 Annual ACM-SIAM Symposium on Discrete Algorithms (SODA), SIAM, 2025, pp.~2005--2039.

\bibitem{arioli1992stopping}
{\sc M.~Arioli, I.~Duff, and D.~Ruiz}, {\em Stopping criteria for iterative solvers}, SIAM J. Matrix Anal. Appl., 13 (1992), pp.~138--144.

\bibitem{axelsson2000sublinear}
{\sc O.~Axelsson and I.~Kaporin}, {\em On the sublinear and superlinear rate of convergence of conjugate gradient methods}, Numerical Algorithms, 25 (2000), pp.~1--22.

\bibitem{axelsson1986rate}
{\sc O.~Axelsson and G.~Lindskog}, {\em On the rate of convergence of the preconditioned conjugate gradient method}, Numerische Mathematik, 48 (1986), pp.~499--523.

\bibitem{barrett2020implicit}
{\sc D.~G. Barrett and B.~Dherin}, {\em Implicit gradient regularization}, arXiv preprint arXiv:2009.11162,  (2020).

\bibitem{beerens2024adversarial}
{\sc L.~Beerens and D.~J. Higham}, {\em Adversarial ink: Componentwise backward error attacks on deep learning}, IMA Journal of Applied Mathematics, 89 (2024), pp.~175--196.

\bibitem{bertsekas2016}
{\sc D.~P. Bertsekas}, {\em {Nonlinear} {Programming}}, Athena Scientific, 3rd~ed., 2016.

\bibitem{chou1987optimality}
{\sc A.~W. Chou}, {\em On the optimality of krylov information}, Journal of Complexity, 3 (1987), pp.~26--40.

\bibitem{davidson2001local}
{\sc K.~R. Davidson and S.~J. Szarek}, {\em Local operator theory, random matrices and banach spaces}, Handbook of the geometry of Banach spaces, 1 (2001), p.~131.

\bibitem{davis2011university}
{\sc T.~A. Davis and Y.~Hu}, {\em The {U}niversity of {F}lorida sparse matrix collection}, ACM Trans. Math. Soft., 38 (2011), pp.~1--25.

\bibitem{derezinski2026matrix}
{\sc M.~Derezi{\'n}ski, E.~N. Epperly, and R.~A. Meyer}, {\em The matrix-vector complexity of {$Ax=b$}}, arXiv preprint arXiv:2602.04842,  (2026).

\bibitem{di2023backward}
{\sc S.~Di~Giovacchino, D.~J. Higham, and K.~Zygalakis}, {\em Backward error analysis and the qualitative behaviour of stochastic optimization algorithms: Application to stochastic coordinate descent}, arXiv preprint arXiv:2309.02082,  (2023).

\bibitem{d2021acceleration}
{\sc A.~d’Aspremont, D.~Scieur, and A.~Taylor}, {\em Acceleration methods}, Foundations and Trends in Optimization, 5 (2021), pp.~1--245.

\bibitem{epperly2026fast}
{\sc E.~N. Epperly, M.~Meier, and Y.~Nakatsukasa}, {\em Fast randomized least-squares solvers can be just as accurate and stable as classical direct solvers}, Communications on Pure and Applied Mathematics, 79 (2026), pp.~293--339.

\bibitem{feng2019uniform}
{\sc Y.~Feng, T.~Gao, L.~Li, J.-G. Liu, and Y.~Lu}, {\em Uniform-in-time weak error analysis for stochastic gradient descent algorithms via diffusion approximation}, arXiv preprint arXiv:1902.00635,  (2019).

\bibitem{dqdsori}
{\sc K.~V. Fernando and B.~N. Parlett}, {\em Accurate singular values and differential qd-algorithms}, Numer. Math., {67} ({1994}), pp.~{191--229}.

\bibitem{fong2011lsmr}
{\sc D.~C.-L. Fong and M.~Saunders}, {\em Lsmr: An iterative algorithm for sparse least-squares problems}, SIAM Journal on Scientific Computing, 33 (2011), pp.~2950--2971.

\bibitem{golubbook}
{\sc G.~H. Golub and C.~F. Van~Loan}, {\em Matrix Computations}, {The Johns Hopkins University Press}, 1996.

\bibitem{golub2013matrix}
{\sc G.~H. Golub and C.~F. Van~Loan}, {\em Matrix computations}, JHU press, 2013.

\bibitem{greenbaum1989behavior}
{\sc A.~Greenbaum}, {\em Behavior of slightly perturbed lanczos and conjugate-gradient recurrences}, Linear Algebra and its Applications, 113 (1989), pp.~7--63.

\bibitem{greenbaumbook}
{\sc A.~Greenbaum}, {\em Iterative Methods for Solving Linear Systems}, SIAM, Philadelphia, PA, USA, 1997.

\bibitem{greenbaum1996any}
{\sc A.~Greenbaum, V.~Pt{\'a}k, and Z.~e.~k. Strako{\v{s}}}, {\em Any nonincreasing convergence curve is possible for {GMRES}}, SIAM J. Matrix Anal. Appl., 17 (1996), pp.~465--469.

\bibitem{hestenes1952methods}
{\sc M.~R. Hestenes, E.~Stiefel, et~al.}, {\em Methods of conjugate gradients for solving linear systems}, Journal of research of the National Bureau of Standards, 49 (1952), pp.~409--436.

\bibitem{Higham:2002:ASNA}
{\sc N.~J. Higham}, {\em Accuracy and Stability of Numerical Algorithms}, SIAM, Philadelphia, PA, USA, second~ed., 2002.

\bibitem{higham2022mixed}
{\sc N.~J. Higham and T.~Mary}, {\em Mixed precision algorithms in numerical linear algebra}, Acta Numerica, 31 (2022), pp.~347--414.

\bibitem{kasenally1995gmback}
{\sc E.~M. Kasenally}, {\em {GMBACK}: a generalised minimum backward error algorithm for nonsymmetric linear systems}, SIAM J. Sci. Comput., 16 (1995), pp.~698--719.

\bibitem{kasenally1997analysis}
{\sc E.~M. Kasenally and V.~Simoncini}, {\em Analysis of a minimum perturbation algorithm for nonsymmetric linear systems}, SIAM J. Numer. Anal., 34 (1997), pp.~48--66.

\bibitem{kuczynski1992estimating}
{\sc J.~Kuczy{\'n}ski and H.~Wo{\'z}niakowski}, {\em Estimating the largest eigenvalue by the power and lanczos algorithms with a random start}, SIAM J. Matrix Anal. Appl., 13 (1992), pp.~1094--1122.

\bibitem{laurent2000adaptive}
{\sc B.~Laurent and P.~Massart}, {\em Adaptive estimation of a quadratic functional by model selection}, Ann. Stat.,  (2000), pp.~1302--1338.

\bibitem{lee2019first}
{\sc C.-p. Lee and S.~Wright}, {\em First-order algorithms converge faster than $ o (1/k) $ on convex problems}, in International Conference on Machine Learning, PMLR, 2019, pp.~3754--3762.

\bibitem{levin1988degree}
{\sc A.~Levin and E.~Saff}, {\em Degree of approximation of real functions by reciprocals of real and complex polynomials}, SIAM journal on mathematical analysis, 19 (1988), pp.~233--245.

\bibitem{musco2018stability}
{\sc C.~Musco, C.~Musco, and A.~Sidford}, {\em Stability of the lanczos method for matrix function approximation}, in Proceedings of the Twenty-Ninth Annual ACM-SIAM Symposium on Discrete Algorithms, SIAM, 2018, pp.~1605--1624.

\bibitem{nemirovskij1983problem}
{\sc A.~S. Nemirovsky and D.~B. Yudin}, {\em Problem Complexity and Method Efficiency in Optimization}, Wiley-Interscience, 1983, \url{https://www2.isye.gatech.edu/~nemirovs/Nemirovskii_Yudin_1983.pdf}.

\bibitem{nesterov1983method}
{\sc Y.~Nesterov}, {\em A method for solving the convex programming problem with convergence rate o (1/k2)}, in Dokl akad nauk Sssr, vol.~269, 1983, p.~543.

\bibitem{paige1976error}
{\sc C.~C. Paige}, {\em Error analysis of the lanczos algorithm for tridiagonalizing a symmetric matrix}, IMA Journal of Applied Mathematics, 18 (1976), pp.~341--349.

\bibitem{paige1982lsqr}
{\sc C.~C. Paige and M.~A. Saunders}, {\em {LSQR}: An algorithm for sparse linear equations and sparse least squares}, ACM Trans. Math. Soft., 8 (1982), pp.~43--71.

\bibitem{paige2002residual}
{\sc C.~C. Paige and Z.~Strakos}, {\em Residual and backward error bounds in minimum residual krylov subspace methods}, SIAM J. Sci. Comput., 23 (2002), pp.~1898--1923.

\bibitem{peters1979inverse}
{\sc G.~Peters and J.~H. Wilkinson}, {\em Inverse iteration, ill-conditioned equations and {N}ewton's method}, SIAM Rev., 21 (1979), pp.~339--360.

\bibitem{richardson1911ix}
{\sc L.~F. Richardson}, {\em Ix. the approximate arithmetical solution by finite differences of physical problems involving differential equations, with an application to the stresses in a masonry dam}, Philosophical Transactions of the Royal Society of London. Series A, containing papers of a mathematical or physical character, 210 (1911), pp.~307--357.

\bibitem{saad2003iterative}
{\sc Y.~Saad}, {\em Iterative methods for sparse linear systems}, SIAM, 2003.

\bibitem{sankar2006smoothed}
{\sc A.~Sankar, D.~A. Spielman, and S.-H. Teng}, {\em Smoothed analysis of the condition numbers and growth factors of matrices}, SIAM J. Matrix Anal. Appl., 28 (2006), pp.~446--476.

\bibitem{trefbau}
{\sc L.~N. Trefethen and D.~Bau}, {\em Numerical Linear Algebra}, SIAM, Philadelphia, 1997.

\bibitem{virtanen2020scipy}
{\sc P.~Virtanen, R.~Gommers, T.~E. Oliphant, M.~Haberland, T.~Reddy, D.~Cournapeau, E.~Burovski, P.~Peterson, W.~Weckesser, J.~Bright, et~al.}, {\em Scipy 1.0: fundamental algorithms for scientific computing in python}, Nature methods, 17 (2020), pp.~261--272.

\bibitem{von1947numerical}
{\sc J.~von Neumann and H.~Goldstine}, {\em Numerical inverting of matrices of high order}, Bulletin of the American Mathematical Society, 53 (1947), pp.~1021--1099.

\bibitem{wilkinson1960error}
{\sc J.~H. Wilkinson}, {\em Error analysis of floating-point computation}, Numerische Mathematik, 2 (1960), pp.~319--340.

\bibitem{wilkinson1961error}
{\sc J.~H. Wilkinson}, {\em Error analysis of direct methods of matrix inversion}, Journal of the ACM (JACM), 8 (1961), pp.~281--330.

\bibitem{wilkinson1963rounding}
{\sc J.~H. Wilkinson}, {\em Rounding errors in algebraic processes}, Her Majesty's Stationery Office, 1963.

\bibitem{wilkinson:1965}
{\sc J.~H. Wilkinson}, {\em The Algebraic Eigenvalue Problem}, Oxford University Press, 1965.

\end{thebibliography}

\appendix 

\section{Analysis of inverse iteration: Proof of Lemma~\ref{lemma:invit}}
\label{a:interse-iteration}

Let $\lambda_{\min}=\lambda_1\leq \lambda_2\cdots \leq \lambda_n$ be the eigenvalues of $\M$, and let ${\bf u}_i$ be the corresponding eigenvectors. 
Expressing ${\bf v}_0 = \sum_{i=1}^n c_i {\bf u}_i, $ where by the assumption that ${\bf v}_0$ is Gaussian, $c_i \sim \mathcal{N}(0, 1)$, we then have 
$\M^{-{\ell}}{\bf v}_0 = \sum_{i=1}^n c_i \lambda_i^{-{\ell}} {\bf u}_i$.
Our goal is to find ${\ell}$ such that the Rayleigh quotient 
$R_{\M}({\bf v}_{\ell}):={\bf v}_{\ell}^\top \M{\bf v}_{\ell}/\|{\bf v}_{\ell}\|_2^2 \leq (1+\Delta)\lambda_1$ with probability $1-\delta$, independent of the specific values of $\lambda_i$.

We partition the index set $I = \{1, \dots, n\}$ into two based on the target error $\Delta$:
\begin{align*} 
 S_{\mathrm{low}} = \{i : \lambda_i \leq (1+\Delta/2)\lambda_1\}
    \quad\text{and}\quad S_{\mathrm{high}} = \{i : \lambda_i > (1+\Delta/2)\lambda_1\}.
\end{align*}
Note that index $1$ is always in $S_{\mathrm{low}}$, and the Rayleigh quotient is:
\begin{equation}\nonumber
    R_{\M}({\bf v}_{\ell}) = \frac{\|\M^{-{\ell}+1/2}{\bf v}_0\|_2^2}{\|\M^{-{\ell}}{\bf v}_0\|_2^2} =\frac{\sum_{i \in S_{\mathrm{low}}} c_i^2 \lambda_i^{-(2{\ell}-1)} + \sum_{j \in S_{\mathrm{high}}} c_j^2 \lambda_j^{-(2{\ell}-1)}}{\sum_{i \in S_{\mathrm{low}}} c_i^2 \lambda_i^{-2{\ell}} + \sum_{j \in S_{\mathrm{high}}} c_j^2 \lambda_j^{-2{\ell}}}.
\end{equation}

We seek to bound the relative error $\frac{R_{\M}({\bf v}_{\ell}) - \lambda_1}{\lambda_1}$. Since $\lambda_i \leq (1+\Delta/2)\lambda_1$ for all $i \in S_{\mathrm{low}}$, the contribution of the first sum in the numerator is already ``safe'', and the error is driven by $S_{\mathrm{high}}$. We simplify the quotient by considering only the $c_1$ term in the denominator:
\begin{align*}
    R_{\M}({\bf v}_{\ell}) &\leq \frac{(1+\Delta/2)\lambda_1 \sum_{i \in S_{\mathrm{low}}} c_i^2 \lambda_i^{-2{\ell}} + \sum_{j \in S_{\mathrm{high}}} c_j^2 \lambda_j^{-(2{\ell}-1)}}{\sum_{i \in S_{\mathrm{low}}} c_i^2 \lambda_i^{-2{\ell}}}
 \\
 &\leq (1+\Delta/2)\lambda_1 + \frac{\sum_{j \in S_{\mathrm{high}}} c_j^2 \lambda_j^{-(2{\ell}-1)}}{c_1^2 \lambda_1^{-2{\ell}}}.
\end{align*}
Focusing on the second term, for $j \in S_{\mathrm{high}}$, $\lambda_j > (1+\Delta/2)\lambda_1$. Thus:
\begin{equation}\nonumber
    \frac{\lambda_j^{-(2{\ell}-1)}}{\lambda_1^{-2{\ell}}} = \lambda_1 \left( \frac{\lambda_1}{\lambda_j} \right)^{2{\ell}-1} < \lambda_1 \left( \frac{1}{1+\Delta/2} \right)^{2{\ell}-1}
\end{equation}
Altogether, the relative error is bounded as follows:
\begin{equation}\nonumber
    \frac{R_{\M}({\bf v}_{\ell}) - \lambda_1}{\lambda_1}\leq \frac{\Delta}{2}  + \frac{\sum_{j \in S_{\mathrm{high}}} c_j^2}{c_1^2} \left( \frac{1}{1+\Delta/2} \right)^{2{\ell}-1}=:\frac{\Delta}{2} +\mbox{Noise}.
\end{equation}

Now, using properties of $\chi^2$ distributions we observe the following:
\begin{enumerate}
    \item $\sum c_j^2$ follows $\chi^2_{|S_{\mathrm{high}}|}$, so by~\cite[Eqn.~(4.3)]{laurent2000adaptive} we see that with probability $1-\delta/2$, we have $\sum c_j^2 \leq n \ln(2/\delta)$.
    \item $c_1^2$ follows $\chi^2_1$, so since the pdf of a Gaussian random variable is bounded by $1/\sqrt{2\pi}$, we have $\mathbb{P}(c_1^2 < \eta) \leq \sqrt{2\eta/\pi}$. To ensure that $c_1^2 \geq \eta$ with probability $1-\delta/2$, we need $\eta \leq \frac{\pi \delta^2}{8}$; we thus set $\eta=\frac{\pi \delta^2}{8}$. 
\end{enumerate}
Combining these, with probability at least $1 - \delta$:
\begin{equation}
    \frac{\sum_{j \in S_{\mathrm{high}}} c_j^2}{c_1^2} \leq \frac{16n \ln(2/\delta)}{\pi \delta^2}.
\end{equation}
For any $j \in S_{\mathrm{high}}$, we have $\lambda_j > (1 + \frac{\Delta}{2})\lambda_1$. The noise term is bounded by:
\begin{equation}
    \text{Noise} \leq \left( \frac{\sum_{j \in S_{\mathrm{high}}} c_j^2}{c_1^2} \right) \left( \frac{\lambda_1}{\lambda_j} \right)^{2\ell-1} \leq \left( \frac{16n \ln(2/\delta)}{\pi \delta^2} \right) \left( \frac{1}{1 + \Delta/2} \right)^{2\ell-1}
\end{equation}
To achieve $R_\M({\bf v}_\ell) \leq (1 + \Delta)\lambda_1$, we require the noise term to be $\leq\Delta/2$:
\begin{equation}
    \left( \frac{1}{1 + \Delta/2} \right)^{2\ell-1} \leq \frac{\pi \delta^2 \Delta}{32n \ln(2/\delta)}
\end{equation}
Taking the logarithm and solving for $\ell$ completes the proof:
\begin{equation}
    \ell \geq \frac{1}{2} \left[ 1 + \frac{\ln\left( \frac{32n \ln(2/\delta)}{\pi \delta^2 \Delta} \right)}{\ln(1 + \Delta/2)} \right]. 
\end{equation}

In particular, for a $1/2$-relative error ($\Delta = 0.5$), we require $\ell \geq 2.23 \ln(n/\delta^2)$. 

\section{Proof of Proposition~\ref{prop:minberrNE}}
\label{a:minberr-ne} 
We start by examining~\eqref{eq:bAQ}, in which we note that $[\b, \A\Q_k] = \U_k[\|\b\|_2\e,\B_k]=:\U_k{\bf N}$, where ${\bf N}\in\mathbb{R}^{(k+1)\times (k+1)}$. We then see that~\eqref{eq:bAQ} reduces to
\begin{equation}\label{eq:bAQ2}    
\bf{N}^\top 
\bf{N}\begin{bmatrix}
-1 \\\y     
\end{bmatrix}
=\lambda\begin{bmatrix}
0 & \\ & \I_k
\end{bmatrix}^\top 
\begin{bmatrix}
0 & \\ & \I_k
\end{bmatrix}
\begin{bmatrix}
-1 \\\y     
\end{bmatrix}. 
\end{equation}
Now $\bf{N}$ is upper bidiagonal; we can apply a congruence transformation to zero-out the $(1,2)$  entry, that is, defining the matrix 
$\W = 
\begin{bmatrix}
\multicolumn{2}{c}{\w_1} \\     
0 & \I_{k}
\end{bmatrix}
\in\mathbb{R}^{(k+1)\times (k+1)}$
where 
$\w_1 = [1, -\frac{{\bf N}_{1,2}}{{\bf N}_{1,1}},0,\ldots,0]$ (where we note that $\frac{{\bf N}_{1,2}}{{\bf N}_{1,1}}=\frac{(\B_k)_{1,1}}{\|\b\|_2}$),
that is, $\W$ is equal to $\I_{k+1}$ except in the (1,2) entry. 
Then consider 
\begin{equation}\nonumber 
\W^\top {\bf N}^\top 
{\bf N}
\W
\left(\W^{-1}
\begin{bmatrix}
-1 \\\y     
\end{bmatrix}\right)
=\lambda\W^\top\begin{bmatrix}
0 & \\ & \I_k
\end{bmatrix}^\top 
\begin{bmatrix}
0 & \\ & \I_k
\end{bmatrix}\W 
\left(\W^{-1}
\begin{bmatrix}
-1 \\\y     
\end{bmatrix}\right), 
\end{equation}
which simplifies to 
\begin{equation}\label{eq:tosvd}
\begin{bmatrix}
\|\b\|_2& \\
& \hspace{-2mm}\tilde\B_k
\end{bmatrix}^\top
\begin{bmatrix}
\|\b\|_2& \\
& \hspace{-2mm}\tilde\B_k
\end{bmatrix}
\left(\W^{-1}
\begin{bmatrix}
-1 \\\y     
\end{bmatrix}\right)
=\lambda\begin{bmatrix}
0 & \\ & \I_k
\end{bmatrix}^\top 
\begin{bmatrix}
0 & \\ & \I_k
\end{bmatrix}
\left(\W^{-1}
\begin{bmatrix}
-1 \\\y     
\end{bmatrix}\right)
\end{equation}
where $\tilde\B_k$ is the submatrix of $\B$ obtained by removing the first row. 

As in the PSD case, we recognize~\eqref{eq:tosvd} 
as a
block diagonal generalized eigenvalue problem (whose smallest eigenpair is desired)
with the first $1\times 1$ block having eigenvalue at infinity. The second block has $\I_k$ in the right-hand side, so the problem is equivalent to finding the smallest singular value and its corresponding right singular vector of $\tilde\B_k$, which is bidiagonal. 
The proof of $\berr_{\A,\b}(\widehat\x) = \|\tilde \B_k\widehat{\bf v}\|_2/\|\widehat {\bf v}\|_2$ is as in Lemma~\ref{l:quotient}, noting that $(\widehat\x)_1=-1$.

\section{Polynomial approximation}\label{app:cheb}
 We use the following simple fact.
\begin{lemma}\label{lemma:sin-eq-easy}
    If $\gamma \in [0, \pi/2]$ then for any integer $a \ge 1$, $\sin(a\gamma) \le a \sin(\gamma)$.
\end{lemma}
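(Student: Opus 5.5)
I would prove $\sin(a\gamma)\le a\sin(\gamma)$ by induction on $a$, using the angle-addition formula. The base case $a=1$ is trivial.

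For the inductive step, assume $\sin(a\gamma)\le a\sin\gamma$ for some $a\ge1$. Then
\[
\sin((a+1)\gamma)=\sin(a\gamma)\cos\gamma+\cos(a\gamma)\sin\gamma .
\]
We have $\cos(a\gamma)\le 1$ and $\sin\gamma\ge0$ on $[0,\pi/2]$, so the second term is at most $\sin\gamma$.

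The only delicate point is the first term: $\sin(a\gamma)$ may be negative for larger $a$, because $a\gamma$ can exceed $\pi$. This is harmless. Since $\cos\gamma\in[0,1]$, we have $\sin(a\gamma)\cos\gamma\le \max\{\sin(a\gamma),0\}$. If $\sin(a\gamma)<0$, this bound is $0\le a\sin\gamma$. Otherwise it is $\sin(a\gamma)\le a\sin\gamma$ by the induction hypothesis. In either case the first term is at most $a\sin\gamma$.

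Adding the two bounds gives $\sin((a+1)\gamma)\le (a+1)\sin\gamma$, which completes the induction. This sign-case check is the only step that needs care; the rest is routine.

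An alternative route is $|\sin(a\gamma)|\le a|\sin\gamma|$ for all real $\gamma$, proved by the same induction using $|\cos|\le1$. Combined with $\sin\gamma\ge0$ on $[0,\pi/2]$, this again yields the claim and avoids the case split. I would present this absolute-value version as the main argument, since it is cleanest.
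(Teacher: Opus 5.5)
Your proposal is correct and follows the paper's proof: induction on $a$ via the angle-addition formula, bounding $\cos(a\gamma)\sin\gamma\le\sin\gamma$ and $\sin(a\gamma)\cos\gamma\le a\sin\gamma$. The sign case split is valid but not needed, because multiplying the induction hypothesis by $\cos\gamma\ge 0$ gives $\sin(a\gamma)\cos\gamma\le a\sin\gamma\cos\gamma\le a\sin\gamma$ directly; your absolute-value variant is the same induction, just stated for all real $\gamma$.
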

\begin{proof}
    This holds by induction on $a$. When $a = 1$, the result trivially holds, assume that it holds for some $a > 1$. Then
    \begin{align*}
    \sin((a+1)\gamma) &= \sin (a \gamma) \cos(\gamma) + \cos(a\gamma) \sin(\gamma) \\
    &\le a\sin(\gamma)\cdot 1 + \sin(\gamma) \cdot 1 = (a + 1) \sin(\gamma).
    \end{align*}
\end{proof}

\begin{proof}[Proof of Lemma~\ref{lem:cheb}]
Recall that
\begin{align*}
G(x) = \frac{1 - T^*_\ell(x)}{2 \ell^2} = \frac{1 - \cos(\ell \arccos(2x - 1))}{2 \ell^2}.
\end{align*}
For $x = \sin^2(\gamma)$ we have $2x -1 = - \cos(2 \gamma)$ and $\arccos(-\cos(2\gamma)) = \pi - 2 \gamma.$ Since $\ell$ is even by construction, $\cos(\ell(\pi - 2\gamma)) = \cos(2\ell \gamma) = 1 - 2 \sin^2(\ell \gamma).$
So,
$$
G(x) = \frac{1 - 1 + 2 \sin^2(\ell \gamma)}{2 \ell^2} = \frac{\sin^2(\ell \gamma)}{\ell^2}.
$$
Using Lemma~\ref{lemma:sin-eq-easy}, we have then
$$
G(x) \le \frac{\ell^2 \sin^2(\gamma)}{\ell^2} = x,
$$
and $G(x) \ge 0$ as $T_\ell^* \in [-1,1]$. Finally, zero is the root of $f(x) := x - G(x)$ multiplicity at least two because
$$
f(0) = - G(0) = \frac{1 - \cos(\ell \arccos(-1))}{\ell^2} = 0; \quad 
f'(0) = 1 - G'(0) = 1 - \frac{2 \ell^2 (-1)^\ell}{2\ell^2} = 0
$$
using that $\ell$ is even  and the standard derivative properties of Chebyshev polynomials.
\end{proof}

\begin{proof}[Proof of Lemma~\ref{lem:sharp-trig-bound}]
First, we show that $F_\ell(\gamma)$ cannot be maximized at $\gamma \in (\frac{\pi}{2\ell}, \frac{\pi}{2}]$. Indeed,
let 
\[
v:=\frac{1}{\ell}\arcsin\bigl|\sin(\ell \gamma)\bigr|
\in \left[0,\frac{\pi}{2\ell}\right].
\]
Then, $\sin^2(\ell v)=\sin^2(\ell \gamma)$ but $v \le \gamma$, so $\sin^2 v\le \sin^2 \gamma$ since $\sin(\cdot)$ is increasing on $[0, \frac{\pi}{2}]$. Also, for \(y\in[0,1]\), the function
$
\phi_y(x)=\frac{xy}{x-y}
$
is decreasing, so $F_\ell(\gamma)\le F_\ell(v)$.

\medskip

Now, we show that $F_\ell(\gamma)$ is decreasing on $\gamma \in [0, \frac{\pi}{2\ell}]$. To simplify the computation, we show that $1/F_\ell$ has nonnegative derivative on this interval. Define
\[
G_\ell(u):=\frac{1}{F_\ell(u)}
=\csc^2(\ell u)-\frac{1}{\ell^2}\csc^2 u,
\]
\[
G_\ell'(u)=\frac{2}{\ell^2u^3}\bigl(h(u)-h(\ell u)\bigr),
\qquad
h(x):=x^3\csc^2 x\,\cot x
=\frac{x^3\cos x}{\sin^3 x}.
\]
So it suffices to know that \(h\) is decreasing on \((0,\pi/2]\). Consider
\[
h'(x)=\frac{x^2}{\sin^4 x}\Bigl(3\sin x\cos x-x(1+2\cos^2 x)\Bigr).
\]
Let $p(x):=3\sin x\cos x-x(1+2\cos^2 x).$ Then
$p'(x)=4\sin x\,(x\cos x-\sin x)\le 0,$
because \(x\mapsto \sin x/x\) decreases on \((0,\pi/2]\), or equivalently
\[
0\geq \Big(\frac{\sin x}{x}\Big)' = \frac{x \cos x - \sin x}{x^2}.
\]
Since \(p(0)=0\), we get \(p(x)\le 0\), hence
$h'(x)\le 0$, so \(h\) is decreasing. Therefore, for \(0<u\le \pi/(2\ell)\), since \(\ell u\le \pi/2\), we have
$h(u)\ge h(\ell u)$,
which implies that
$G_\ell'(u)\ge 0$.

\medskip

Therefore, the supremum of $F_\ell$ is the limit as \(u\downarrow 0\). Using Taylor expansion for cosecant, we have
\[
G_\ell(u)
=
\left(\frac{1}{\ell^2u^2}+\frac13+O(u^2)\right)
-
\left(\frac{1}{\ell^2u^2}+\frac{1}{3\ell^2}+O(u^2)\right)
=
\frac{\ell^2-1}{3\ell^2}+O(u^2).
\]
Hence
\[
\sup_{u>0}F_\ell(u)
=
\lim_{u\downarrow 0}F_\ell(u)
=
\frac{3\ell^2}{\ell^2-1}.
\]
\end{proof}

\end{document}